\documentclass[reqno]{amsart}

\usepackage[letterpaper,top=2cm,bottom=2cm,left=3cm,right=3cm,marginparwidth=2.0cm]{geometry}

\usepackage[linesnumbered,ruled,vlined]{algorithm2e}

\usepackage{amsmath,amsthm,amssymb,mathtools, mathrsfs,booktabs} 
\usepackage[english]{babel}
\usepackage[utf8]{inputenc}
\usepackage{microtype} 
\usepackage{csquotes}
\usepackage{graphicx}
\usepackage{ytableau}
\usepackage{tikz}
\usepackage{verbatim}
\usetikzlibrary{decorations.pathreplacing}
\usepackage{todonotes}
\presetkeys{todonotes}{inline}{}
\usepackage{arydshln}

\usepackage[colorlinks=true, allcolors=blue,pdftex]{hyperref}

\theoremstyle{definition}
\newtheorem{example}{Example}
\newtheorem{definition}{Definition}
\newtheorem{remark}{Remark}

\theoremstyle{plain}
\newtheorem{theorem}{Theorem}
\newtheorem{lemma}{Lemma}
\newtheorem{proposition}{Proposition}
\newtheorem{corollary}{Corollary}
\newtheorem{conjecture}{Conjecture}

\newcommand{\oeis}[1]{\href{https://oeis.org/#1}{#1}}
\definecolor{lightblue}{RGB}{173, 216, 230}
\definecolor{skyblue}{RGB}{135, 206, 255}
\definecolor{pastelblue}{RGB}{176, 224, 230}
\definecolor{iceblue}{RGB}{200, 230, 250}

\definecolor{mintgreen}{RGB}{144, 238, 144}     
\definecolor{pastelgreen}{RGB}{152, 251, 152}   
\definecolor{softgreen}{RGB}{193, 255, 193} 
\definecolor{lemonyellow}{RGB}{255, 250, 205}
\definecolor{peachpink}{RGB}{255, 218, 185}
\definecolor{lavender}{RGB}{230, 230, 250}
\definecolor{aquablue}{RGB}{175, 238, 238}

\newcommand{\Per}[1]{\todo[size=\tiny,inline,color=aquablue]{#1 \\ \hfill --- Per}}

\newcommand{\defin}[1]{%
\relax\ifmmode%
\textcolor{blue}{#1}%
\else\textcolor{blue}{\emph{#1}}%
\fi%
}

\newcommand{\setN}{\mathbb{N}}

\newcommand{\setZ}{\mathbb{Z}}

\newcommand{\pDes}{\mathrm{pDes}}

\newcommand{\uvec}{\mathbf{u}}

\newcommand{\wvec}{\mathbf{w}}
\newcommand{\xvec}{\mathbf{x}}
\newcommand{\yvec}{\mathbf{y}}

\newcommand{\symS}{\mathfrak{S}} 

\newcommand{\versch}{\varphi} 

\DeclareMathOperator{\SH}{\mathrm{SH}}

\DeclareMathOperator{\std}{std}
\DeclareMathOperator{\DES}{DES}

\DeclareMathOperator{\length}{\ell}

\DeclareMathOperator{\lettercount}{count}
\DeclareMathOperator{\rowMap}{\mathtt{rowDiv}}
\newcommand{\rowAdj}[1]{\mathop{\mathtt{rowDiv}_{#1}^{\perp}}}
\DeclareMathOperator{\colMap}{\mathtt{colDiv}}
\newcommand{\colAdj}[1]{\mathop{\mathtt{colDiv}_{#1}^{\perp}}}

\DeclareMathOperator{\blue}{blue}

\newcommand{\mittagLefflerE}{E} 
\newcommand{\stanleyMap}{\Theta} 

\newcommand{\schur}{\mathrm{s}}
\newcommand{\monomial}{\mathrm{m}}
\newcommand{\powersum}{\mathrm{p}}
\newcommand{\completeH}{\mathrm{h}}
\newcommand{\elementaryE}{\mathrm{e}}
\newcommand{\gessel}{F} 
\newcommand{\qmonomial}{M} 

\newcommand{\dominates}{\rhd}
\newcommand{\dominatesBy}{\unlhd}

\newcommand{\YSSYT}{\mathrm{YSSYT}}
\newcommand{\SSYT}{\mathrm{SSYT}}
\newcommand{\SYT}{\mathrm{SYT}}
\newcommand{\QSym}{\mathrm{QSym}}
\newcommand{\BCM}{\mathrm{BCM}}
\newcommand{\sgn}{\mathrm{sgn}}
\newcommand{\SRHT}{\mathrm{SRHT}}

\makeatletter

\ExplSyntaxOn

\seq_new:N \l__rht_rows_seq
\int_new:N \l__rht_rowcount_int
\bool_new:N \l__rht_french_bool
\bool_new:N \l__rht_dot_bool
\tl_new:N \l__rht_scale_tl
\tl_new:N \l__rht_currow_tl
\tl_new:N \l__rht_curdigit_tl
\tl_new:N \l__rht_neighbor_tl
\int_new:N \l__rht_curcols_int

\NewDocumentCommand{\rimHookTableau}{ O{} m }
{
  \tl_set:Nn \l__rht_scale_tl { 1.0 }
  \bool_set_false:N \l__rht_french_bool
  \bool_set_false:N \l__rht_dot_bool
  \keys_set:nn { rht } { #1 }
  \__rht_parse_and_draw:n { #2 }
}

\keys_define:nn { rht }
{
  scale .tl_set:N = \l__rht_scale_tl,
  French .bool_set:N = \l__rht_french_bool,
  Dot .bool_set:N = \l__rht_dot_bool,
}

\cs_new_protected:Nn \__rht_parse_and_draw:n
{
  \seq_set_split:Nnn \l__rht_rows_seq { , } { #1 }
  \int_set:Nn \l__rht_rowcount_int { \seq_count:N \l__rht_rows_seq }
  \__rht_store_cells:
  \__rht_build_commands:
  \__rht_draw:
}

\cs_new_protected:Nn \__rht_store_cells:
{
  \int_step_inline:nn { \l__rht_rowcount_int }
  {
    \tl_set:Nx \l__rht_currow_tl { \seq_item:Nn \l__rht_rows_seq { ##1 } }
    \int_set:Nn \l__rht_curcols_int { \tl_count:N \l__rht_currow_tl }
    \tl_gset:cx { g__rht_rowlen_##1 } { \int_use:N \l__rht_curcols_int }
    \int_step_inline:nn { \l__rht_curcols_int }
    {
      \tl_gset:cx { g__rht_cell_##1_####1 } 
        { \tl_item:Nn \l__rht_currow_tl { ####1 } }
    }
  }
}

\cs_new:Nn \__rht_get_cell:nn
{
  \tl_if_exist:cTF { g__rht_cell_#1_#2 }
    { \tl_use:c { g__rht_cell_#1_#2 } }
    { }
}

\cs_new:Nn \__rht_get_rowlen:n
{
  \tl_if_exist:cTF { g__rht_rowlen_#1 }
    { \tl_use:c { g__rht_rowlen_#1 } }
    { 0 }
}

\cs_new:Nn \__rht_ycoord:n
{
  \bool_if:NTF \l__rht_french_bool
    { \int_eval:n { \l__rht_rowcount_int - #1 + 1 } }
    { #1 }
}

\cs_new_protected:Nn \__rht_build_commands:
{
  \gdef\rhtcmds{}
  
  \int_step_inline:nn { \l__rht_rowcount_int }
  {
    \int_step_inline:nn { \__rht_get_rowlen:n { ##1 } }
    { \__rht_add_cell_cmd:nn { ##1 } { ####1 } }
  }
  
  \int_step_inline:nn { \l__rht_rowcount_int }
  {
    \int_step_inline:nn { \__rht_get_rowlen:n { ##1 } }
    { \__rht_add_connection_cmd:nn { ##1 } { ####1 } }
  }
  
  \bool_if:NT \l__rht_dot_bool
  {
    \int_step_inline:nn { \l__rht_rowcount_int }
    {
      \int_step_inline:nn { \__rht_get_rowlen:n { ##1 } }
      { \__rht_add_dot_cmd:nn { ##1 } { ####1 } }
    }
  }
}

\cs_new_protected:Nn \__rht_add_cell_cmd:nn
{
  \tl_set:Nx \l__rht_curdigit_tl { \__rht_get_cell:nn{#1}{#2} }
  \str_if_eq:VnF \l__rht_curdigit_tl { 0 }
  {
    \edef\rht@tempa{\int_eval:n{#2-1}}
    \edef\rht@tempb{\int_eval:n{-\__rht_ycoord:n{#1}}}
    \exp_args:NNx \g@addto@macro \rhtcmds { \exp_not:N \rhtcell{\rht@tempa}{\rht@tempb} }
  }
}

\cs_new_protected:Nn \__rht_add_connection_cmd:nn
{
  \tl_set:Nx \l__rht_curdigit_tl { \__rht_get_cell:nn{#1}{#2} }
  \tl_if_empty:NF \l__rht_curdigit_tl
  {
    \str_if_eq:VnF \l__rht_curdigit_tl { 0 }  
    {
      \int_compare:nNnT {#2} < { \__rht_get_rowlen:n{#1} }
      {
        \tl_set:Nx \l__rht_neighbor_tl { \__rht_get_cell:nn{#1}{\int_eval:n{#2+1}} }
        \tl_if_eq:NNT \l__rht_curdigit_tl \l__rht_neighbor_tl
        { \__rht_add_hline_cmd:nn{#1}{#2} }
      }

      \bool_if:NTF \l__rht_french_bool
      {
        \int_compare:nNnT {#1} > {1}
        {
          \int_compare:nNnT {#2} < { \int_eval:n{ \__rht_get_rowlen:n{\int_eval:n{#1-1}} + 1 } }
          {
            \tl_set:Nx \l__rht_neighbor_tl { \__rht_get_cell:nn{\int_eval:n{#1-1}}{#2} }
            \tl_if_eq:NNT \l__rht_curdigit_tl \l__rht_neighbor_tl
            { \__rht_add_vline_cmd:nn{#1}{#2} }
          }
        }
      }
      {
        \int_compare:nNnT {#1} < {\l__rht_rowcount_int}
        {
          \int_compare:nNnT {#2} < { \int_eval:n{ \__rht_get_rowlen:n{\int_eval:n{#1+1}} + 1 } }
          {
            \tl_set:Nx \l__rht_neighbor_tl { \__rht_get_cell:nn{\int_eval:n{#1+1}}{#2} }
            \tl_if_eq:NNT \l__rht_curdigit_tl \l__rht_neighbor_tl
            { \__rht_add_vline_cmd:nn{#1}{#2} }
          }
        }
      }
    }
  }
}

\cs_new_protected:Nn \__rht_add_hline_cmd:nn
{
  \edef\rht@tempa{\int_eval:n{#2}}
  \edef\rht@tempb{\int_eval:n{-\__rht_ycoord:n{#1}}}
  \exp_args:NNx \g@addto@macro \rhtcmds { \exp_not:N \rhthline{\rht@tempa}{\rht@tempb} }
}

\cs_new_protected:Nn \__rht_add_vline_cmd:nn
{
  \edef\rht@tempa{\int_eval:n{#2}}
  \edef\rht@tempb{\int_eval:n{-\__rht_ycoord:n{#1}}}
  \bool_if:NTF \l__rht_french_bool
  {
    \edef\rht@tempc{\int_eval:n{-\__rht_ycoord:n{\int_eval:n{#1-1}}}}
  }
  {
    \edef\rht@tempc{\int_eval:n{-\__rht_ycoord:n{\int_eval:n{#1+1}}}}
  }
  \exp_args:NNx \g@addto@macro \rhtcmds { \exp_not:N \rhtvline{\rht@tempa}{\rht@tempb}{\rht@tempc} }
}

\cs_new_protected:Nn \__rht_add_dot_cmd:nn
{
  \tl_set:Nx \l__rht_curdigit_tl { \__rht_get_cell:nn{#1}{#2} }
  \str_if_eq:VnF \l__rht_curdigit_tl { 0 }
  {
    \edef\rht@tempa{\int_eval:n{#2}}
    \edef\rht@tempb{\int_eval:n{-\__rht_ycoord:n{#1}}}
    \exp_args:NNx \g@addto@macro \rhtcmds { \exp_not:N \rhtdot{\rht@tempa}{\rht@tempb} }
  }
}

\cs_new_protected:Nn \__rht_draw:
{
  \edef\rhtscale{\l__rht_scale_tl}
  \rhtdrawtableau
}

\ExplSyntaxOff

\def\rhtcell#1#2{\draw (#1,#2) rectangle +(1,1);}
\def\rhthline#1#2{\draw[line width=1.5pt] (#1-0.5,#2+0.5) -- (#1+0.5,#2+0.5);}
\def\rhtvline#1#2#3{\draw[line width=1.5pt] (#1-0.5,#2+0.5) -- (#1-0.5,#3+0.5);}
\def\rhtdot#1#2{\fill (#1-0.5,#2+0.5) circle (2pt);}

\def\rhtdrawtableau{%
  \begin{tikzpicture}[scale=\rhtscale,baseline=(current bounding box.center)]
    \rhtcmds
  \end{tikzpicture}%
}

\makeatother

\title{Partition division maps, symmetric functions and positivity}

\author{Per Alexandersson}
\address{Department of Mathematics, Stockholm University, SE-106 91 Stockholm, Sweden}
\email{per.w.alexandersson@gmail.com}

\author{Lilan Dai}
\address{Center for Combinatorics, LPMC, 
	Nankai University, Tianjin 300071, P. R. China}
\email{daililan@mail.nankai.edu.cn}

\begin{document}

\begin{abstract}
We introduce a linear map on symmetric functions that ``divides'' a partition by a positive integer $k$, 
sending a Schur function indexed by a partition of $kn$ to a symmetric function 
indexed by partitions of $n$. We determine its
Schur expansion explicitly for Schur and skew Schur functions, showing that the
coefficients are enumerated by a new family of combinatorial objects, called $k$-Yamanouchi tableaux, which generalize the classical ballot 
(Yamanouchi) tableaux appearing in the Littlewood--Richardson rule. 

We also study the images of elementary symmetric functions under this map, derive the power-sum expansion of their $\omega$-images, and establish power-sum positivity. A further application establishes a connection to work of Tewodros Amdeberhan, John Shareshian, 
and Richard Stanley on alternating permutations and Euler numbers.
\medskip 

\noindent\textbf{Keywords:} Symmetric functions; Positivity; $k$-Yamanouchi tableaux; Alternating permutations.
\medskip 
		
\noindent\textbf{AMS Classification 2020:} 05E05; 05A17; 05A19.
\end{abstract}

\maketitle

\section{Introduction}

The theory of symmetric functions plays a central role in algebraic
combinatorics, with deep connections to representation theory, algebraic
geometry, and enumerative
combinatorics~\cite{Macdonald1995,StanleyEC2}. Among the many distinguished
bases of the ring $\Lambda$ of symmetric functions, the Schur functions occupy
a particularly prominent position, arising as characters of irreducible
representations of the symmetric and general linear groups and admitting rich
combinatorial descriptions via semistandard Young tableaux. They also admit a
geometric interpretation as Schubert classes in the cohomology ring of
Grassmannians, where the cup product of Schubert classes is governed by the
Littlewood--Richardson coefficients, mirroring the multiplication of Schur
functions.

Kostka coefficients describe the change of basis between Schur and monomial
functions, and also appear in the Schur expansion of complete homogeneous
symmetric functions. These have been generalized in various directions,
including the theory of Kostka--Foulkes polynomials and their connections to
Hall--Littlewood functions and geometry; see~\cite{Macdonald1995}. 
We introduce
a linear map $\rowMap_k$ on $\Lambda$ that may be viewed as “dividing” a
partition by a positive integer $k$, which coincides with the
map $T_k$ studied by Stanley \cite{StanleyFlagSymmetric96}. When applied to a Schur function indexed
by a partition of the form $k\lambda$, the resulting symmetric function encodes
stretched Kostka coefficients $K_{k\lambda,k\mu}$ and may be interpreted as a
generating function of tableaux with fractional weights. It follows from a
result of Rassart \cite{Rassart2004} that the map
$k \mapsto K_{k\lambda,k\mu}$ is a polynomial function of $k$. Viewing
$K_{k\lambda,k\mu}$ as the Ehrhart polynomial of the Gelfand--Tsetlin polytope
$GT_{\lambda\mu}$, McAllister~\cite{Mcallister2008} computed its degree,
resolving a conjecture of
King--Tollu--Toumazet~\cite{KingTolluToumazet2004}.

Continuous Gelfand--Tsetlin geometry provides another way to pass from
discrete tableau sums to real-valued objects, by replacing summation over
integral patterns with integration over real or geometric Gelfand--Tsetlin
patterns. O'Connell~\cite{OConnell2014} represents
$J_\lambda(x)=h(\lambda)^{-1}\det(e^{\lambda_i x_j})$, where
$h(\lambda)=\prod_{i<j}(\lambda_i-\lambda_j)$, as an integral
over the real Gelfand--Tsetlin polytope, and further relates
this integral to Whittaker functions and Givental's formula. Tao~\cite{Tao2017} discusses continuous analogues of Schur and skew Schur polynomials obtained by replacing sums over integral Gelfand--Tsetlin patterns with integrals over real Gelfand--Tsetlin patterns. More combinatorial, Prasad~\cite{Prasad2018} develops timed tableaux and a timed plactic monoid extending classical tableau combinatorics to real-valued settings. Our operator $\rowMap_k$ is related to these in that it also connects classical tableau combinatorics with scaled or fractional structures. Rather than passing directly to continuous objects, $\rowMap_k$ produces symmetric functions governed by stretched Kostka coefficients and tableaux with fractional weights, while retaining positivity properties and explicit tableau models.

A fundamental problem in algebraic combinatorics is to understand positivity
phenomena in expansions between different bases. For instance, the Schur
positivity of LLT polynomials, introduced by Lascoux, Leclerc, and
Thibon~\cite{LLT1997}, was established by Grojnowski and
Haiman~\cite{GrojnowskiHaiman2006} and plays an important role in the study of
Macdonald polynomials~\cite{GrojnowskiHaiman2006}. Similar positivity
phenomena arise in the theory of chromatic symmetric functions, introduced by
Stanley~\cite{Stanley1995}. In particular, for incomparability graphs of
$(3+1)$-free posets, Gasharov~\cite{Gasharov1996} proved Schur positivity,
providing strong evidence for the Stanley--Stembridge conjecture asserting
$\elementaryE$-positivity. Guay-Paquet~\cite{GuayPaquet2013} showed that this
conjecture reduces to the case of unit interval graphs, and it was proved by
Hikita~\cite{Hikita2024}.

Our construction of $\rowMap_k$ is also closely related to classical operations
on symmetric functions. In particular, the map $\rowMap_k$ is reminiscent of
the Verschiebung operator, which is adjoint to the Adams operator with respect
to the Hall inner product~\cite{Macdonald1995,StanleyEC2}. From this
perspective, the map $\rowMap_k$ can be regarded as a close cousin of these
classical constructions, while exhibiting distinct positivity and
combinatorial features.

The organization of this paper is as follows. In
Section~\ref{Preliminaries}, we review the necessary preliminaries on
symmetric functions and introduce two linear maps $\rowMap_k$ and $\colMap_k$
that will play a central role throughout the paper. In
Section~\ref{rowmap_kOns}, we investigate the fundamental quasisymmetric
expansion of $\rowMap_k(\schur_\mu)$ in Theorem~\ref{thm:F-expansion}, and then determine its Schur expansion, with the main result given in
Theorem~\ref{thm:schurExpansion}, obtaining an explicit combinatorial interpretation of the coefficients. In Section~\ref{rowmap_kOne}, we study the
expansion of $\rowMap_k(\elementaryE_\mu)$. We determine its Schur expansion in
Theorem~\ref{thm:emuSchurExpansion} and establish connections with work of
Amdeberhan, Shareshian, and Stanley, thereby generalizing their conjecture. We
also obtain a power-sum expansion of $\omega(\rowMap_k(\elementaryE_\mu))$
(see Theorem~\ref{thm:mpos} and Corollary~\ref{coro:p-positive}). We further
investigate its expansion in the elementary basis. While the combinatorial
description remains open, we give the sums of $\elementaryE$-coefficients via
Stanley’s linear functional, as shown in Theorem~\ref{thm:SumOfe-effic}. 
Finally, in Section~\ref{Application}, we present several applications of our results.

\section{Preliminaries}\label{Preliminaries}
We begin by recalling basic notions on partitions and tableaux. A \defin{partition} $\lambda = (\lambda_1,\lambda_2,\dots,\lambda_\ell) \vdash n$ is a weakly decreasing sequence of positive integers such that $\sum_i \lambda_i = n$. We identify $\lambda$ with its Young diagram. For example,
\[
\ytableausetup{boxsize=0.7em}
\ydiagram{5,3,2,1} 
\]
is the Young diagram of $\lambda=(5,3,2,1)$.

We will also use the dominance order on partitions. For each $i \ge 1$, let $m_i = m_i(\lambda)$ denote the number of times $i$ appears in $\lambda$, and
\[
\defin{z_\lambda} \coloneqq \prod_{i \ge 1} i^{m_i} m_i!.
\]

For partitions $\lambda,\mu \vdash n$, we write $\lambda \unrhd \mu$ if 
\[
\sum_{i=1}^j \lambda_i \ge \sum_{i=1}^j \mu_i
\quad \text{for all } j \ge 1,
\]
and call this the \defin{dominance order}.

Tableaux play a central role in the combinatorial description of symmetric functions. A \defin{semistandard Young tableau} ($\SSYT$) of shape $\lambda$ is a 
filling of the diagram with positive integers that are weakly 
increasing along rows and strictly increasing down columns.

We now recall the basic definitions of symmetric functions. Let $\Lambda = \bigoplus_{n \ge 0} \Lambda^n$ denote the ring of symmetric functions over $\mathbb{Q}$. 

\begin{definition}[Standard bases of $\Lambda$]
For a partition $\lambda = (\lambda_1, \lambda_2, \dots) \vdash n$, the \defin{monomial symmetric function} $\monomial_\lambda$ is defined as
\[
\defin{\monomial_\lambda(\xvec)} \coloneqq \sum_{\alpha} x_1^{\alpha_1} x_2^{\alpha_2} \cdots,
\]
where the sum is over all distinct permutations $\alpha$ of $\lambda$.

For $k \ge 0$, the \defin{complete homogeneous symmetric function} $\completeH_k$ and the \defin{elementary symmetric function} $\elementaryE_k$ are defined by
\[
\defin{\completeH_k(\xvec)} \coloneqq \sum_{i_1 \le \cdots \le i_k} x_{i_1} \cdots x_{i_k},
\quad
\defin{\elementaryE_k(\xvec)} \coloneqq \sum_{i_1 < \cdots < i_k} x_{i_1} \cdots x_{i_k}.
\]
For a partition $\lambda$, set $\completeH_\lambda \coloneqq \completeH_{\lambda_1} \completeH_{\lambda_2} \cdots$ and $\elementaryE_\lambda \coloneqq \elementaryE_{\lambda_1} \elementaryE_{\lambda_2} \cdots$.

For $k \ge 1$, the \defin{power sum symmetric function} $\powersum_k$ is defined by
\[
\defin{\powersum_k(\xvec)} \coloneqq \sum_i x_i^k,
\]
and for a partition $\lambda$, set $\powersum_\lambda \coloneqq \powersum_{\lambda_1} \powersum_{\lambda_2} \cdots$.

Recall that the \defin{Schur function} $\schur_\lambda$ is defined as
\begin{equation}\label{eq:schurDef}
\defin{ \schur_{\lambda}(\xvec) } \coloneqq \sum_{T \in \SSYT(\lambda)} \xvec^T =
 \sum_{\mu} K_{\lambda \mu} \monomial_\mu(\xvec),
\end{equation}
where $K_{\lambda \mu}$ are the Kostka coefficients.
\end{definition}

More generally, we will also consider skew shapes. The \defin{skew Schur function} $\schur_{\lambda/\mu}$ by
\begin{equation}\label{eq:skewSchurDef}
\defin{\schur_{\lambda/\mu}(\xvec)} \coloneqq \sum_{T \in \SSYT(\lambda/\mu)} \xvec^T,
\end{equation}
where $\SSYT(\lambda/\mu)$ denotes the set of semistandard Young tableaux of skew shape $\lambda/\mu$.

We will frequently consider positivity properties with respect to different bases. A symmetric function is called \defin{$f$-positive} if it expands in the basis $\{f_\lambda\}$ with nonnegative integer coefficients.

The \defin{Hall inner product} $\langle\cdot, \cdot \rangle$ on $\Lambda$ is defined by 
\[
\langle\monomial_\lambda, \completeH_\mu\rangle\coloneqq\delta_{\lambda\mu}.
\]
The Schur functions are orthonormal: $\langle\schur_\lambda, \schur_\mu\rangle=\delta_{\lambda\mu}.$

The \defin{reading word} of a tableau is obtained by reading its entries row by row from left to right, starting from the bottom row and proceeding to the top.

A word $w = w_1 w_2 \cdots w_N$ in positive integers is called \defin{Yamanouchi} if, in every prefix of the word, the number of occurrences of $i$ is at least the number of occurrences of $i+1$ for all $i \ge 1$. A semistandard Young tableau is called a \defin{Yamanouchi tableau} if its reading word is Yamanouchi. These objects play a central role in the Littlewood–Richardson rule, which describes the structure constants $c_{\lambda,\mu}^\nu$ in following theorem.
\begin{theorem}[Littlewood--Richardson rule, see e.g.~\cite{StanleyEC2}]\label{thm:LRRule}
We have that
\[
 \schur_{\lambda /\mu} = \sum_{\theta} c^{\lambda}_{\mu,\theta} \schur_\theta,
\text{ and }
 \schur_{\mu} \cdot \schur_\theta = \sum_{\lambda} c^{\lambda}_{\mu,\theta} \schur_\lambda,
\]
where $c^{\lambda}_{\mu,\theta}$ is the number of Yamanouchi-tableaux of 
shape $\lambda/\mu$ and type $\theta$, see Definition~\ref{def:Yamanouchi} below.
\end{theorem}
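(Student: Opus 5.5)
The two identities are equivalent by adjointness, so the plan is to prove the skew expansion combinatorially and then transfer it to products. Since the Schur functions are orthonormal for the Hall inner product, the coefficient of $\schur_\theta$ in $\schur_{\lambda/\mu}$ is $\langle \schur_{\lambda/\mu},\schur_\theta\rangle$. The coefficient of $\schur_\lambda$ in $\schur_\mu\schur_\theta$ is $\langle \schur_\mu\schur_\theta,\schur_\lambda\rangle$. Using the standard fact that $\schur_{\lambda/\mu}$ is characterized by $\langle \schur_{\lambda/\mu}, f\rangle = \langle \schur_\lambda, \schur_\mu f\rangle$ for all $f\in\Lambda$, these two quantities agree. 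It therefore suffices to show that the number of Yamanouchi tableaux of shape $\lambda/\mu$ and type $\theta$ equals the Schur coefficient of $\schur_\theta$ in $\schur_{\lambda/\mu}$.

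For that I would use a sign-reversing involution in the style of Bender--Knuth/Stembridge, which yields the skew expansion directly from \eqref{eq:skewSchurDef}. Set $\delta=(n-1,\dots,1,0)$ and $a_\alpha=\det(x_i^{\alpha_j})$ in $n$ variables, with $n$ large. Then $\schur_\theta = a_{\theta+\delta}/a_\delta$, so it is enough to prove
\[
a_\delta\,\schur_{\lambda/\mu} = \sum_{\theta} \#\{\text{Yamanouchi } T \text{ of shape } \lambda/\mu,\text{ type }\theta\}\; a_{\theta+\delta}.
\]
The left side equals $\sum_{w\in \symS_n}\sum_{T} \sgn(w)\, x^{w(\delta)+\mathrm{wt}(T)}$. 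For each pair $(w,T)$, scan the reading word of $T$ from its start. Look for the first prefix at which $w(\delta)+\mathrm{wt}(\text{prefix})$ acquires two equal coordinates $i,i+1$, equivalently at which the $\delta$-shifted ballot condition first fails.

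When such a failure occurs, apply the Bender--Knuth-type switch of the letters $i,i+1$ to the remaining suffix of the word. Restricted to the suffix this swap preserves semistandardness, and it is combined with $w\mapsto s_i w$. The result is an involution that reverses sign and preserves the monomial, so all such terms cancel. The survivors are the pairs for which the condition never fails. These are exactly $w=\mathrm{id}$ with $T$ Yamanouchi, and each contributes $x^{\delta+\mathrm{wt}(T)}$. Antisymmetrizing then gives $a_{\theta+\delta}$ with $\theta=\mathrm{wt}(T)$, which is a partition because of the Yamanouchi condition.

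The main obstacle is verifying that the partial switch on the suffix is well defined on skew semistandard tableaux and is an involution. One must check that the first-failure position is unchanged after the switch and that column strictness is preserved. The first thing I would try is restricting to the free $i/(i+1)$ letters of the suffix in the row-reading order, using the standard Stembridge argument.
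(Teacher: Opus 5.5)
The paper does not prove this statement; it quotes it from \cite{StanleyEC2}. So your proposal has to stand on its own. The reduction via adjointness and the bialternant formula is fine. The involution, however, has a genuine gap: it mixes two incompatible bookkeeping schemes.

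\textbf{The survivor claim is false as stated.} A pair with $w\neq\mathrm{id}$ need never produce equal adjacent coordinates. Take one cell, $n=2$, $w=s_1$ and entry $2$: the vector goes $(0,1)\to(0,2)$, and the term $-x_2^2$ survives. More generally, the antisymmetric polynomial $a_\delta\schur_{\lambda/\mu}$ cannot equal the positive sum $\sum_T x^{\delta+\mathrm{wt}(T)}$ over Yamanouchi $T$, so the claimed identity cannot hold. If you keep pairs $(w,T)$, you must compare only coefficients of $x^{\theta+\delta}$ with $\theta$ a partition. Then every $w\neq\mathrm{id}$ does hit an equality, since some adjacent difference moves from negative to positive in unit steps.

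\textbf{More seriously, your move does not preserve the monomial.} Write $\gamma=w(\delta)$ and let $P,S$ be the prefix and suffix weights. At the first failure it is $\gamma+P$ that is $s_i$-invariant. So $w\mapsto s_iw$ is compatible with switching the prefix, not the suffix. Your rule produces the exponent $s_i\gamma+P+s_iS$, which in general differs from $\gamma+P+S$.

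Concretely, take $\lambda=(2)$, $\mu=\emptyset$, $n=2$. The vanishing coefficient of $x_1^2x_2$ in $a_\delta\schur_{2}=x_1^3-x_2^3$ comes from the cancelling pairs $(\mathrm{id},\,1\,2)$ and $(s_1,\,1\,1)$. But $(\mathrm{id},\,1\,2)$ has reverse reading word $21$, which fails after one letter. Your rule then sends it to $(s_1,\,2\,2)$, with exponent $(0,3)$. Switching the prefix instead would repair the weight, but it changes the intermediate prefix weights, so the position of the first failure is no longer under control.

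\textbf{The standard repair is Stembridge's argument, which does not pair with $w$ at all.} By symmetry of $\schur_{\lambda/\mu}$, one has $a_\delta\schur_{\lambda/\mu}=\sum_T a_{\delta+\mathrm{wt}(T)}$ over all $T$, and $a_{s_i\alpha}=-a_\alpha$. Let $T_{\ge j}$ denote the part of $T$ in columns $\ge j$. The argument runs as follows.
\begin{enumerate}
\item Scan by columns from right to left. Let $j$ be the first column for which $\delta+\mathrm{wt}(T_{\ge j})$ is not strictly decreasing, say at $i,i+1$ with $i$ minimal.
\item Apply Bender--Knuth for $i,i+1$ to the unread columns $T_{<j}$. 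These form a skew shape, which disposes of the obstacle you flagged.
\item The failing column contains $i+1$ but not $i$. This is exactly what keeps rows weakly increasing across the cut.
\item Since $T_{\ge j}$ is untouched, the failure data is unchanged, so the map is an involution.
\item By $s_i$-invariance of $\delta+\mathrm{wt}(T_{\ge j})$, we get $a_{\delta+\mathrm{wt}(T^*)}=-a_{\delta+\mathrm{wt}(T)}$.
\end{enumerate}
The survivors are the $T$ with every $\mathrm{wt}(T_{\ge j})$ a partition, i.e.\ those with ballot column word. This is equivalent to the ballot reverse reading word of Definition~\ref{def:Yamanouchi}.

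Note also that scanning the reading word of the preliminaries from its start, as your wording suggests, would wrongly admit the filling $1\,2$ of shape $(2)$ with type $(1,1)$.
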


For further background on symmetric functions and tableaux, see~\cite{Sagan2001,Fulton1997}.

\subsection{Two maps on symmetric functions}

Let $\lambda = (\lambda_1,\dotsc,\lambda_\ell)$ be an integer partition and $k$ a positive integer.
We let $k\lambda$ denote the integer partition $(k\lambda_1,\dotsc,k\lambda_\ell)$ and $\lambda^k$ be the integer partition 
\[
(\lambda_1^k,...,\lambda_\ell^k)=(\underbrace{\lambda_1,\dotsc,\lambda_1}_k,
\dotsc,
\underbrace{\lambda_\ell,\dotsc,\lambda_\ell}_k
).
\]
Note that $\lambda^k = (k\lambda ')'$, where $\lambda'$ is the transpose of $\lambda$.

\begin{example}
Suppose $\lambda=(4,2,1)$. Then the Young diagrams of $3\lambda$ and $\lambda^3$ are 
\[
\ytableausetup{boxsize=0.7em}
\ydiagram{12,6,3} 
\qquad 
\text{ and }
\qquad 
\ydiagram{4,4,4,2,2,2,1,1,1}.
\]
\end{example}

Let \defin{$\rowMap_k$} and \defin{$\colMap_k$} be ``\emph{partition division maps}'' on 
the monomial basis of the symmetric functions as follows:
\begin{align}
 \defin{\rowMap_k\left(\monomial_{\mu}(\xvec)\right)} &\coloneqq 
 \begin{cases}
  \monomial_{\frac{1}{k} \mu}(\xvec)  & \text{ if all entries of $\mu$ are divisible by $k$} \\
  0 & \text{ otherwise.}
 \end{cases}\label{eq:rowmapDef}
\\
 \defin{ \colMap_k\left(\monomial_{\mu}(\xvec)\right) } &\coloneqq 
 \begin{cases}
  \monomial_{\mu^{1/k}}(\xvec)  & \text{ if all entries of $\mu'$ are divisible by $k$} \\
  0 & \text{ otherwise.}
 \end{cases}\label{eq:colmapDef}
\end{align}

Note that $\rowMap_k$ and $\colMap_k$ send symmetric functions of degree $nk$
to symmetric functions of degree $n$ (or $0$).

The maps $\rowMap_k$ and $\colMap_k$ are \emph{not} ring homomorphisms,
but simply linear maps on a graded vector space.
However, we shall see that the adjoint operators \emph{are} homomorphisms!

\begin{lemma}
Let $\hat{\omega}$ be the involution on the space of symmetric functions that 
sends $\monomial_\mu$ to $\monomial_{\mu'}$. Then 
\[
\colMap_k = \hat{\omega} \circ \rowMap_k \circ\; \hat{\omega}.
\]
\end{lemma}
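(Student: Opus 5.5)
Since all three maps are linear and the monomial functions $\monomial_\mu$ form a basis of $\Lambda$, it suffices to check the identity on each $\monomial_\mu$. So we fix a partition $\mu$ and compute $\hat{\omega}\circ\rowMap_k\circ\hat{\omega}(\monomial_\mu)$ directly from the definitions \eqref{eq:rowmapDef} and \eqref{eq:colmapDef}.

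First, $\hat{\omega}(\monomial_\mu)=\monomial_{\mu'}$. Applying $\rowMap_k$ gives one of two outcomes. It gives $0$ unless every part of $\mu'$ is divisible by $k$. If every part of $\mu'$ is divisible by $k$, it gives $\monomial_{\frac1k\mu'}$. In the zero case, applying $\hat{\omega}$ leaves $0$, which agrees with $\colMap_k(\monomial_\mu)=0$, since the two vanishing conditions are literally the same: all entries of $\mu'$ divisible by $k$. In the nonzero case, we get $\hat{\omega}(\monomial_{\frac1k\mu'})=\monomial_{(\frac1k\mu')'}$. It remains to show that $(\frac1k\mu')'=\mu^{1/k}$.

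This is the only step needing a word of justification. We must check that when all parts of $\mu'$ are divisible by $k$, the partition $\mu$ has the form $\nu^k$ for a unique $\nu$, and that $\nu=(\frac1k\mu')'$. This gives meaning to the notation $\mu^{1/k}$: the partition in which each part of $\mu$ appears with $1/k$ of its multiplicity.

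To prove this, set $\nu=(\frac1k\mu')'$. By the identity $\lambda^k=(k\lambda')'$ noted earlier, applied with $\lambda=\nu$, we get
\[
\nu^k=(k\nu')'=\Bigl(k\cdot\tfrac1k\mu'\Bigr)'=\mu''=\mu.
\]
Hence $\nu=\mu^{1/k}$. Uniqueness holds because $\lambda\mapsto\lambda^k$ is injective.

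Alternatively, we can argue directly with multiplicities. The parts of $\mu'$ are the column lengths of $\mu$. All of them being divisible by $k$ is equivalent to every multiplicity $m_i(\mu)$ being divisible by $k$, because $m_i(\mu)=\mu'_i-\mu'_{i+1}$ and $\mu'_i=\sum_{j\ge i}m_j(\mu)$.

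Combining the two cases, $\hat{\omega}\circ\rowMap_k\circ\hat{\omega}$ and $\colMap_k$ agree on every $\monomial_\mu$, and hence, by linearity, on all of $\Lambda$. No step is a real obstacle; the only care needed is the transpose bookkeeping above.
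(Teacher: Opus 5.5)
Your argument is correct and complete. The paper states this lemma without proof, treating it as immediate from the definitions, and your check on the monomial basis is exactly the verification left implicit there: you match the vanishing conditions and then identify $(\frac1k\mu')'=\mu^{1/k}$ via $\lambda^k=(k\lambda')'$, the same transpose bookkeeping ($\alpha=\beta^k$ if and only if $\alpha'=k\beta'$) that the paper uses in the proof of Proposition~\ref{prop:adjMap}.
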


\begin{proposition}\label{prop:adjMap}
The adjoints of $\rowMap_k$ and $\colMap_k$ are the 
maps \defin{$\rowAdj{k}$} and \defin{$\colAdj{k}$}, respectively, satisfying
\begin{equation}
  \rowAdj{k}( \completeH_r ) \mapsto \completeH_{kr},
\quad 
\text{ and }
\quad 
  \colAdj{k}( \completeH_r ) \mapsto \completeH_{r}^k.
\end{equation}
\end{proposition}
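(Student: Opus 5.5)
The plan is to compute the adjoints directly in the dual pair of bases $\{\monomial_\lambda\}$, $\{\completeH_\lambda\}$, which are dual under the Hall inner product. Since $\rowMap_k$ is defined on the monomial basis, its adjoint is determined on the complete homogeneous basis. For any linear map $\phi$ we have $\langle \phi(\monomial_\mu), \completeH_\lambda\rangle = \langle \monomial_\mu, \phi^\perp(\completeH_\lambda)\rangle$. This forces $\phi^\perp(\completeH_\lambda) = \sum_\mu \langle \phi(\monomial_\mu),\completeH_\lambda\rangle \completeH_\mu$.

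For $\rowMap_k$, the coefficient $\langle \rowMap_k(\monomial_\mu),\completeH_\lambda\rangle$ equals $1$ exactly when $\mu = k\lambda$, and $0$ otherwise. Hence $\rowAdj{k}(\completeH_\lambda) = \completeH_{k\lambda} = \prod_i \completeH_{k\lambda_i}$. This shows both that $\rowAdj{k}$ sends $\completeH_r \mapsto \completeH_{kr}$ and that it is multiplicative on the $\completeH$-basis. It is therefore the ring homomorphism determined by $\completeH_r\mapsto \completeH_{kr}$, since $\Lambda$ is freely generated by the $\completeH_r$.

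For $\colMap_k$ the same computation applies. Here $\langle \colMap_k(\monomial_\mu),\completeH_\lambda\rangle = 1$ iff $\mu^{1/k}=\lambda$, i.e.\ $\mu=\lambda^k$. We must check that this $\mu$ indeed has all parts of $\mu'$ divisible by $k$. This holds because $\lambda^k = (k\lambda')'$, so $\mu' = k\lambda'$, and the map $\mu\mapsto\mu^{1/k}$ is injective on such partitions. Thus $\colAdj{k}(\completeH_\lambda) = \completeH_{\lambda^k}$. As a multiset, $\lambda^k$ consists of each $\lambda_i$ repeated $k$ times, so $\completeH_{\lambda^k} = \prod_i \completeH_{\lambda_i}^k$. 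Multiplicativity again gives a homomorphism with $\completeH_r\mapsto \completeH_r^k$.

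The only point needing care is the bookkeeping for $\colMap_k$: the partition $\mu^{1/k}$ must be well defined and the correspondence $\mu\leftrightarrow\lambda$ must be bijective. I handle this via the identity $\lambda^k=(k\lambda')'$ noted earlier, rather than through the preceding lemma. Alternatively, one could try to deduce the $\colMap_k$ case from $\colMap_k=\hat\omega\circ\rowMap_k\circ\hat\omega$. That route would require knowing $\hat\omega^\perp$, which is awkward because $\hat\omega$ is not self-adjoint in a convenient way. The direct computation is therefore preferable.
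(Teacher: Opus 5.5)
Your proposal is correct and is essentially the paper's argument. Both reduce, via $\langle \monomial_\mu,\completeH_\lambda\rangle=\delta_{\mu\lambda}$, to the computations $\langle \rowMap_k(\monomial_\mu),\completeH_\lambda\rangle=\delta_{\mu,k\lambda}$ and $\langle \colMap_k(\monomial_\mu),\completeH_\lambda\rangle=\delta_{\mu,\lambda^k}$. The only difference is that you solve for the adjoint and make its multiplicativity explicit, whereas the paper posits $\completeH_\lambda\mapsto\completeH_{k\lambda}$ and $\completeH_\lambda\mapsto\completeH_{\lambda^k}$ and then checks the pairing.

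One small correction to your closing aside: the $\hat\omega$ route is in fact easy. The same duality gives $\hat\omega^\perp(\completeH_\lambda)=\completeH_{\lambda'}$, and hence $\colAdj{k}(\completeH_\lambda)=\completeH_{(k\lambda')'}=\completeH_{\lambda^k}$ at once.
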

\begin{proof}
Consider $\alpha\vdash nk$ and $\beta\vdash n$. 
\[
\langle \monomial_\alpha, \rowAdj{k}(\completeH_\beta) \rangle = 
\langle \monomial_\alpha, \completeH_{k \beta} \rangle = \delta_{\alpha, k\beta},
\]
and
\[
\langle \monomial_\alpha, \colAdj{k}(\completeH_\beta) \rangle = 
\langle \monomial_\alpha, \completeH_{\beta}^k\rangle = \delta_{\alpha, \beta^k},
\]
since $\langle \monomial_\alpha, \completeH_\gamma \rangle=\delta_{\alpha\gamma}$ 
for any partition $\gamma$.

On the other hand, we consider 
$\langle \rowMap_k(\monomial_\alpha), \completeH_\beta \rangle.$
 If no part of $\alpha$ is divisible by $k$, then $\rowMap_k(\monomial_\alpha)=0$, 
so the Hall inner product is $0$. 
In this case, $\alpha \neq k\beta$ (as parts of $k\beta$ are multiples of $k$), 
so $\delta_{\alpha, k\beta}=0$.

Moreover, if all parts of $\alpha$ are divisible by $k$, 
let $\alpha=k\gamma$ with $\gamma\vdash n$. 
Then $\rowMap_k(\monomial_\alpha)=\monomial_\gamma$, and
\[
\langle \monomial_\gamma, \completeH_\beta \rangle=\delta_{\gamma\beta}.
\]
Note that $\delta_{\alpha, k\beta}=\delta_{k\gamma, k\beta}=\delta_{\gamma\beta}$.
Similarly,
$\langle \colMap_k(\monomial_\alpha), \completeH_\beta \rangle=\delta_{(\alpha'/k)',\beta}$ 
if all parts of $\alpha'$ are divisible by $k$, and $0$ otherwise. 
This is equivalent to $\delta_{\alpha,\beta^k}$, since if $(\alpha'/k)'=\beta$, 
then $\alpha'/k=\beta'$, so $\alpha'=k\beta'$, and $\alpha=(k\beta')'=\beta^k$. 
Therefore, the adjoint property holds for the bases of $\Lambda$, 
and by linearity and bilinearity, it extends to all $f, g \in \Lambda$, 
i.e.,~for all $f, g \in \Lambda$,
\[
\langle \rowMap_k(f), g \rangle = \langle f, \rowAdj{k}(g) \rangle,
\quad 
\text{ and }
\quad
\langle \colMap_k(f), g \rangle = \langle f, \colAdj{k}(g) \rangle.
\]
\end{proof}

The maps in \eqref{eq:rowmapDef} and \eqref{eq:colmapDef} can naturally be extended
to the space of quasisymmetric functions and the space of polynomials in general.

\subsection{The Schur functions under the two division maps}

Let $k$ be a positive integer and let $\lambda \vdash nk$.
By the above definitions, we have that 
\begin{equation}
\rowMap_k(\schur_\lambda(\xvec)) =
\sum_{\mu \vdash n} K_{\lambda,k\mu} \monomial_\mu(\xvec),
\end{equation}
where $K_{\lambda,k\mu}$ is a Kostka coefficient as in \eqref{eq:schurDef}.
Similarly, 
\begin{equation}
\colMap_k(\schur_\lambda(\xvec)) = \sum_{\mu \vdash n} K_{\lambda,\mu^k} \monomial_\mu(\xvec).
\end{equation}

In particular, if $\lambda \vdash n$, we have
\begin{equation}\label{eq:rowColMapSpecialCase}
\rowMap_k(\schur_{k\lambda}(\xvec)) = \sum_{\mu} K_{k\lambda,k\mu} \monomial_\mu(\xvec),
\qquad 
\colMap_k(\schur_{\lambda^k}(\xvec)) = \sum_{\mu} K_{\lambda^k,\mu^k} \monomial_\mu(\xvec).
\end{equation}

Note that $K_{k\lambda,k\mu}$ are known in the literature as \defin{stretched Kostka coefficients}, see \cite{KingTolluToumazet2004,Mcallister2008,Rassart2004}.

We can interpret the coefficients $K_{k\lambda,k\mu}$ and 
 $K_{\lambda^k,\mu^k}$ as the number of ways to
fill the diagram $\lambda$ with $\mu_i$ boxes equal to $i$, for $i \geq 1$
but we allow boxes to contain fractional weights, which are multiples of $\frac{1}{k}$. 
To obtain $K_{k\lambda,k\mu}$, subdivide each column into $k$
fractional columns, which must still be strictly increasing,
while $K_{\lambda^k,\mu^k}$ is obtained when 
each row instead is sub-divided into $k$ weakly increasing fractional rows.

\begin{remark}
We have that $\rowMap_k(\schur_\lambda(\xvec))$ equals $0$ 
if and only if there does not exist any partition $\mu\vdash n$ such that $\lambda\unrhd k\mu$ under the dominance order.
\end{remark}

\begin{example}
Note that each semi-standard tableau in $\SSYT(\lambda,\mu)$
bijects naturally to a tableau in $\SSYT(k\lambda,k\mu)$,
and  $\SSYT(\lambda^k,\mu^k)$, respectively.
Here, $\lambda=331$ and $k=2$:

\begin{figure}[htbp]
    \centering
    \begin{minipage}{0.25\textwidth}
        \centering
        \ytableausetup{boxsize=1.5em}
        \ytableaushort{112,234,3}
    \end{minipage}
    \hfill
    \begin{minipage}{0.35\textwidth}
        \centering
        \begin{tikzpicture}[scale=0.7]
          \draw (0,0) -- (1,0);  
          \draw (0,1) -- (3,1);  
          \draw (0,2) -- (3,2);  
          \draw (0,3) -- (3,3);  
          \draw (0,0) -- (0,3);  
          \draw[dashed] (0.5,0) -- (0.5,3);  
          \draw (1,0) -- (1,3);  
          \draw[dashed] (1.5,1) -- (1.5,3);
          \draw (2,1) -- (2,3);  
          \draw[dashed] (2.5,1) -- (2.5,3);
          \draw (3,1) -- (3,3);  
          \node at (0.25, 0.5) {3};
          \node at (0.75, 0.5) {3};
          
          \node at (0.25, 1.5) {2};
          \node at (0.75, 1.5) {2};
          \node at (1.25, 1.5) {3};
          \node at (1.75, 1.5) {3};
          \node at (2.25, 1.5) {4};
          \node at (2.75, 1.5) {4};
          
          \node at (0.25, 2.5) {1};
          \node at (0.75, 2.5) {1};
          \node at (1.25, 2.5) {1};
          \node at (1.75, 2.5) {1};
          \node at (2.25, 2.5) {2};
          \node at (2.75, 2.5) {2};
        \end{tikzpicture}
    \end{minipage}
    \hfill
    \begin{minipage}{0.35\textwidth}
        \centering
        \begin{tikzpicture}[scale=0.7]
          \draw (0,0) -- (1,0);  
          \draw[dashed]  (0,0.5) -- (1,0.5);
          \draw (0,1) -- (3,1);  
          \draw[dashed] (0,1.5) -- (3,1.5); 
          \draw (0,2) -- (3,2);  
          \draw[dashed] (0,2.5) -- (3,2.5);
          \draw (0,3) -- (3,3);  
          \draw (0,0) -- (0,3);  
          \draw (1,0) -- (1,3);  
          \draw (2,1) -- (2,3);  
          \draw (3,1) -- (3,3);  
          \node at (0.5, 0.25) {6};
          \node at (0.5, 0.75) {5};
          
          \node at (0.5, 1.25) {4};
          \node at (0.5, 1.75) {3};
          \node at (1.5, 1.25) {6};
          \node at (1.5, 1.75) {5};
          \node at (2.5, 1.25) {8};
          \node at (2.5, 1.75) {7};
          
          \node at (0.5, 2.25) {2};
          \node at (0.5, 2.75) {1};
          \node at (1.5, 2.25) {2};
          \node at (1.5, 2.75) {1};
          \node at (2.5, 2.25) {4};
          \node at (2.5, 2.75) {3};
        \end{tikzpicture}
    \end{minipage}
\end{figure}
\end{example}

\begin{lemma}
For a fixed positive integer $k$, the set of functions 
$\{\rowMap_k(\schur_{k\lambda}) :\lambda \vdash n\}$ is linearly independent and forms a basis for the symmetric function space $\Lambda_n$.
The same statement holds for $\{\colMap_k(\schur_{k\lambda}) :\lambda \vdash n\}$.
\end{lemma}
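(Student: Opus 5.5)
The statement is about the family $\{\colMap_k(\schur_{k\lambda}) : \lambda \vdash n\}$. Since $\dim \Lambda_n$ equals the number of partitions of $n$, which is also the size of this family, it suffices to prove linear independence. I treat the two maps separately, because they behave differently: $\rowMap_k$ admits a triangularity argument directly in the monomial basis, while $\colMap_k$ does not.

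\textbf{The $\rowMap_k$ part.} By \eqref{eq:rowColMapSpecialCase}, $\rowMap_k(\schur_{k\lambda}) = \sum_{\mu} K_{k\lambda,k\mu}\monomial_\mu$. Here $K_{k\lambda,k\mu}\neq 0$ forces $k\lambda \unrhd k\mu$, which is equivalent to $\lambda\unrhd\mu$. Moreover $K_{k\lambda,k\lambda}=1$. So the transition matrix to $\{\monomial_\mu\}$ is unitriangular with respect to (a linear extension of) dominance order, and the family is a basis.

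\textbf{The $\colMap_k$ part: why the monomial basis fails.} Here $\colMap_k(\schur_{k\lambda}) = \sum_{\mu\vdash n} K_{k\lambda,\mu^k}\monomial_\mu$. This matrix is generally not triangular in any ordering. Already for $n=k=2$ it has no zero entries: the rows are $\monomial_2+2\monomial_{11}$ for $\lambda=(1,1)$ and $\monomial_2+\monomial_{11}$ for $\lambda=(2)$. Even so, converting these to Schur functions gives
\[
\colMap_2(\schur_{22}) = \schur_{11}+\schur_{2},
\qquad
\colMap_2(\schur_{4}) = \schur_{2},
\]
which is unitriangular. I would therefore aim to prove the following claim:
\[
\langle \colMap_k(\schur_{k\lambda}), \schur_\nu\rangle \neq 0 \implies \nu \unrhd \lambda,
\qquad
\langle \colMap_k(\schur_{k\lambda}), \schur_\lambda\rangle = 1 .
\]
This would give unitriangularity in the Schur basis, and hence the lemma.

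\textbf{Route to the claim.} By Proposition~\ref{prop:adjMap}, $\langle \colMap_k(\schur_{k\lambda}),\schur_\nu\rangle = \langle \schur_{k\lambda}, \colAdj{k}(\schur_\nu)\rangle$. Since $\colAdj{k}$ is the ring homomorphism $\completeH_r\mapsto\completeH_r^k$, Jacobi--Trudi gives
\[
\colAdj{k}(\schur_\nu)=\det\bigl(\completeH_{\nu_i-i+j}^{\,k}\bigr).
\]
The coefficient of $\schur_{k\lambda}$ in this determinant is what must be controlled. For the diagonal entry, expand $\schur_\lambda = \sum_{\rho\unrhd\lambda} c_{\lambda\rho}\completeH_\rho$ with $c_{\lambda\lambda}=1$ (inverse Kostka triangularity). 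Then
\[
\langle \schur_{k\lambda},\colAdj{k}\schur_\lambda\rangle = \sum_{\rho\unrhd\lambda} c_{\lambda\rho} K_{k\lambda,\rho^k}.
\]
For the off-diagonal vanishing, a term $K_{k\lambda,\rho^k}$ can be nonzero only when $k\lambda\unrhd\rho^k$, which bounds $\rho$ row by row. Combining this with $\rho\unrhd\nu$ should force the required dominance relation between $\nu$ and $\lambda$, possibly after cancellation in the signed sum.

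\textbf{Main obstacle.} This last step is the hard one, since naive dominance bounds point in the wrong direction: $\completeH_a^k=\schur_{(ka)}+(\text{lower})$ only yields $\lambda\unlhd\rho$. If it resists, I would try a combinatorial alternative. The idea is to interpret $K_{k\lambda,\mu^k}$ via the fractional-row fillings described after \eqref{eq:rowColMapSpecialCase}, and to compute the Schur expansion through a Yamanouchi-type sign-reversing involution, in the spirit of Theorem~\ref{thm:LRRule}. The dominant term would be identified as $\schur_\lambda$ with coefficient $1$.
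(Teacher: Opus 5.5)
\textbf{There is a genuine gap in the $\colMap_k$ half.} Your $\rowMap_k$ argument is exactly the paper's. For $\colMap_k$, however, the Schur-unitriangularity you set out to prove is false. Both your determinantal route and your combinatorial fallback aim at a ``dominant term $\schur_\lambda$ with coefficient $1$'', so neither can succeed.

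Already for $n=2$, $k=3$,
\[
\colMap_3(\schur_{33})=K_{(3,3),(2,2,2)}\,\monomial_2+f^{33}\,\monomial_{11}=\monomial_2+5\monomial_{11}=\schur_2+4\schur_{11},
\]
so the diagonal coefficient is $4$, not $1$.

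For $n=3$, $k=2$, use $K_{(4,2),(3,3)}=1$, $K_{(4,2),(2,2,1,1)}=4$, $f^{42}=9$, $K_{(2,2,2),(3,3)}=0$, $K_{(2,2,2),(2,2,1,1)}=1$ and $f^{222}=5$. These give
\[
\colMap_2(\schur_{42})=\schur_3+3\schur_{21}+2\schur_{111},\qquad \colMap_2(\schur_{222})=\schur_{21}+3\schur_{111}.
\]
So for $\lambda=(2,1)$ the term $\schur_{111}$ occurs, although $(1,1,1)\not\unrhd(2,1)$. The Schur transition matrix is nonzero in both positions $((2,1),(1,1,1))$ and $((1,1,1),(2,1))$, so it is not triangular for \emph{any} ordering. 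Its determinant is $7$, so the family happens to be a basis here, but not for the reason you propose.

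\textbf{The missing idea is that the paper never works with $\colMap_k(\schur_{k\lambda})$.} Its proof treats the column analogue from \eqref{eq:rowColMapSpecialCase}, namely $\colMap_k(\schur_{\lambda^k})=\sum_{\nu\vdash n}K_{\lambda^k,\nu^k}\monomial_\nu$. The proof computes throughout with $\schur_{\lambda^k}$, so the intended family is $\{\colMap_k(\schur_{\lambda^k})\}$; the ``$\schur_{k\lambda}$'' in the column sentence of the statement is a slip.

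For that family your row argument goes through verbatim in the monomial basis. Since $\lambda^k=(k\lambda')'$ and conjugation reverses dominance,
\[
\lambda^k\unrhd\nu^k\iff k\nu'\unrhd k\lambda'\iff\lambda\unrhd\nu .
\]
Together with $K_{\lambda^k,\lambda^k}=1$, this makes the transition matrix unitriangular.

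If you insist on the literal family $\{\colMap_k(\schur_{k\lambda})\}$, you would need to prove $\det\bigl(K_{k\lambda,\nu^k}\bigr)_{\lambda,\nu\vdash n}\neq 0$ with no triangular structure in either the monomial or the Schur basis. The small cases above are nonsingular, but neither your proposal nor the paper gives an argument for this in general.
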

\begin{proof} Since $\dim \Lambda^n = p(n)$, where $p(n)$ is the number of partitions of $n$, and there are $p(n)$ such $\rowMap_k(\schur_{k\lambda})$, it suffices to show linear independence.

Consider the transition matrix $M =(K_{k\lambda, k\nu})_{\lambda, \nu \vdash n}$, where rows and columns are indexed by partitions of $n$. This matrix relates the set $\{\rowMap_k(\schur_{k\lambda})\}$ to the monomial basis $\{ \monomial_\nu \}$ via
\[
(\rowMap_k(\schur_{k\lambda}))_{\lambda \vdash n} = M (\monomial_\nu)_{\nu \vdash n}.
\]
If $\lambda \unrhd  \mu$, then for each $i$,
\[
k\lambda_1+ \dotsb +k\lambda_i=k (\lambda_1+\cdots+\lambda_i)\geq k (\mu_1+\dotsb+\mu_i)=k\mu_1+\cdots + k\mu_i,
\]
so $k\lambda \unrhd k\mu$. 
Therefore, by the property of Kostka coefficients, $K_{k\lambda, k\nu}=0$ unless $k\lambda \unrhd k\nu$, 
which holds iff $\lambda \unrhd \nu$. Also, $K_{k\lambda, k\lambda}=1$.
Hence, $\det M=1$, so $\{ \rowMap_k(\schur_{k\lambda}) \}$ is a basis.

By similar reasoning, $\colMap_k(\schur_{\lambda^k})=\sum_{\nu \vdash n} K_{\lambda^k, \nu^k} \monomial_\nu=\sum_{\nu \vdash n} K_{(k\lambda')', (k\nu')'} \monomial_\nu$.
Since $\hat{\omega}$ is a vector space automorphism on $\Lambda_n$,
it follows that 
$\{ \colMap_k(\schur_{k\lambda}) : \lambda \vdash n \}$ is also a basis for $\Lambda_n$. 
\end{proof}

\begin{remark}
   A natural attempt to realize $\rowMap_k(s_{k\lambda})$ as the Frobenius characteristic of an $\mathfrak S_n$-module is to imitate the
Specht-module $S^{k\lambda}$ construction using $k\lambda$-tabloids, with each $1,\ldots,n$ appearing $k$ times. The invariant space
\[ 
U_{\lambda,k}\coloneqq (S^{k\lambda})^{(\mathfrak S_k)^n}
\]
is naturally an $\mathfrak S_n$-module. Its dimension agrees with the
dimension predicted by $\rowMap_k(\schur_{k\lambda})$:
\[
  \dim U_{\lambda,k}
  =
  \langle \schur_{k\lambda},\completeH_{(k^n)}\rangle
  =
  K_{k\lambda,(k^n)}
  =[\monomial_{1^n}]\rowMap_k(\schur_{k\lambda}).
\]
Unfortunately, this construction does not give the correct Frobenius
characteristic in general. For example, for $k=3$, we have the following images of $\schur_{\lambda}$ under $\rowMap_k$
for $\lambda \vdash 9$. The partitions not listed are mapped to 0.

\begin{table}[!ht]
\centering
\begin{tabular}{@{}llll@{}}
\toprule
Partition $\lambda$  & Monomial expansion & Schur expansion & $\mathrm{ch}( U_{\lambda,k} )$ \\
\midrule
$(9)$ & $\monomial_{3}+\monomial_{21}+\monomial_{111}$ & $\schur_{3}$  & $\schur_{3}$ \\
$(8,1)$ & $\monomial_{21}+2\,\monomial_{111}$ & $\schur_{21}$ & $\schur_{21}$\\
$(7,2)$ & $\monomial_{21}+3\,\monomial_{111}$ & $\schur_{21}+\schur_{111}$ & $\schur_{3} + \schur_{21}$\\
$(7,1,1)$ & $\monomial_{111}$ & $\schur_{111}$ & $\schur_{111}$  \\
$(6,3)$ & $\monomial_{21}+4\,\monomial_{111}$ & $\schur_{21}+2\,\schur_{111}$ & $\schur_{3}+\schur_{21}+\schur_{111}$ \\
$(6,2,1)$ & $2\,\monomial_{111}$ & $2\,\schur_{111}$ & $\schur_{21}$ \\
$(5,4)$ & $2\,\monomial_{111}$ & $2\,\schur_{111}$ & $\schur_{21}$ \\
$(5,3,1)$ & $3\,\monomial_{111}$ & $3\,\schur_{111}$ & $\schur_{21} + \schur_{111}$  \\
$(5,2,2)$ & $\monomial_{111}$ & $\schur_{111}$ & $\schur_{3}$ \\
$(4,4,1)$ & $\monomial_{111}$ & $\schur_{111}$ & $\schur_{3}$ \\
$(4,3,2)$ & $2\,\monomial_{111}$ & $2\,\schur_{111}$ & $\schur_{21}$ \\
$(3,3,3)$ & $\monomial_{111}$ & $\schur_{111}$& $\schur_{111}$  \\
\bottomrule
\end{tabular}
\end{table}
Note that 
\[
 \mathrm{ch}(U_{\lambda,k}) = \sum_{\nu \vdash n} 
 \langle \schur_\lambda, \schur_\nu[\completeH_k] \rangle \, \schur_\nu,
\]
where $\schur_\nu[\completeH_k]$ denotes \emph{plethysm} corresponding to replacing
each letter by a block with size $k$ carrying the trivial $\mathfrak S_k$-representation.
\end{remark}

\section{The expansion of \texorpdfstring{$\rowMap_k(\schur_{\mu})$}{rowDiv of Schur}}\label{rowmap_kOns}
\subsection{Fundamental quasisymmetric expansions}
We recall the fundamental quasisymmetric basis, introduced by Gessel~\cite{Gessel1984}, which will be used to expand $\rowMap_k (\schur_\lambda(\xvec))$.
\begin{definition}[Fundamental quasisymmetric functions]
Let $\alpha = (\alpha_1,\alpha_2,\dots,\alpha_\ell)$ be a composition of $n$. The \defin{fundamental quasisymmetric function} $F_\alpha$ is defined by
\[
\defin{F_\alpha(\xvec)} \coloneqq \sum_{\substack{i_1 \le i_2 \le \cdots \le i_n \\ i_j < i_{j+1}, j \in S_\alpha}} x_{i_1} x_{i_2} \cdots x_{i_n},
\]
where $S_\alpha = \{\alpha_1, \alpha_1+\alpha_2, \dots, \alpha_1+\cdots+\alpha_{\ell-1}\}$.
\end{definition}
\begin{lemma}
The image of a fundamental quasisymmetric function under $\rowMap_k$ is again 
a fundamental quasisymmetric function, or $0$:
\[
\rowMap_k\left( \gessel_\alpha(\xvec) \right) = 
\begin{cases}
\gessel_{\frac{1}{k}\alpha}(\xvec) & \text{if all entries of $\alpha$ are divisible by $k$} \\
0 & \text{otherwise}.
\end{cases}
\]
\end{lemma}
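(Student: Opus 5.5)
The plan is to extend $\rowMap_k$ to quasisymmetric functions through the monomial quasisymmetric basis, and then compare both sides of the claimed identity monomial by monomial. The natural extension sets $\rowMap_k(\qmonomial_\beta)=\qmonomial_{\frac1k\beta}$ when every part of the composition $\beta$ is divisible by $k$, and $0$ otherwise. This is the same rule as \eqref{eq:rowmapDef}, now applied to compositions. Concretely, on a single monomial $\xvec^{\gamma}$ the map keeps $\xvec^{\gamma/k}$ if every exponent is divisible by $k$ and kills it otherwise. This monomial-level description is compatible with $\monomial_\mu=\sum \qmonomial_\beta$, where $\beta$ runs over rearrangements of $\mu$, so it agrees with the symmetric definition.

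First, expand $\gessel_\alpha$ as a sum of monomials. Each monomial $x_{i_1}\cdots x_{i_N}$ in $\gessel_\alpha$, with $N=|\alpha|$, corresponds to a weakly increasing sequence $i_1\le\cdots\le i_N$ that strictly increases at every position in $S_\alpha$. Grouping equal indices, such a sequence is the same as a composition $\beta$, whose entries are the multiplicities of the distinct values, together with a choice of distinct values. The constraint is that the descent set $S_\beta$ of $\beta$ contains $S_\alpha$, i.e.\ $\beta$ refines $\alpha$. This recovers $\gessel_\alpha=\sum_{\beta \preceq \alpha}\qmonomial_\beta$, where $\beta$ ranges over the refinements of $\alpha$.

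Next, apply the map, so that only refinements $\beta$ of $\alpha$ with all parts divisible by $k$ survive. The key observation is the following. If $\beta$ refines $\alpha$ and every part of $\beta$ is divisible by $k$, then every part of $\alpha$ is divisible by $k$, since each part of $\alpha$ is a sum of consecutive parts of $\beta$. Hence if some part of $\alpha$ is not divisible by $k$, nothing survives and the image is $0$. If instead $\alpha=k\gamma$, then the map $\beta\mapsto\frac1k\beta$ is a bijection between the refinements of $\alpha$ with parts divisible by $k$ and all refinements of $\gamma$. Indeed, $\beta$ refines $k\gamma$ exactly when $\frac1k\beta$ refines $\gamma$, because partial sums scale by $k$. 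Therefore
\[
\rowMap_k(\gessel_\alpha)=\sum_{\delta\preceq\gamma}\qmonomial_\delta=\gessel_{\gamma}.
\]

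No step is a real obstacle. The only point needing care is to fix the extension of $\rowMap_k$ to $\QSym$, and to check that it is consistent with the definition on $\Lambda$. After that, the divisibility argument for refinements is immediate.
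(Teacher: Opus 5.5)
Your proof is correct and is essentially a careful unpacking of what the paper means by ``this follows directly from the definition'': you extend $\rowMap_k$ monomial-by-monomial to $\QSym$ and write $\gessel_\alpha=\sum_{\beta\preceq\alpha}\qmonomial_\beta$. You then observe that refinements of $\alpha$ with all parts divisible by $k$ exist only when $\alpha=k\gamma$, and in that case correspond bijectively to refinements of $\gamma$ via $\beta\mapsto\frac1k\beta$, which is exactly the verification the paper leaves implicit.
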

\begin{proof}
This follows directly from the definition.
\end{proof}

\begin{remark}
The image of $\gessel_\alpha(\xvec)$ under $\colMap_k$ is more complicated.
For example,
\[
 \colMap_2 \left( \gessel_{42}(\xvec) \right) = 
 \qmonomial_{12} + \qmonomial_{21} + \qmonomial_{111}
 =\gessel_{12}+\gessel_{21} - \gessel_{111}.
\]
\end{remark}
\subsection{Expanding \texorpdfstring{$\rowMap_k \schur_{\lambda}(\xvec)$}{rowDiv of Schur function} in the fundamental quasisymmetric basis}

For a standard Young tableau $T$ of size $n$, 
the \defin{descent set} $\DES(T)$ is the set of 
entries $i \in \{1,\dotsc,n-1\}$ such that $i+1$ appears 
in a row strictly below $i$ in $T$ (see~\cite[Section~7.19]{StanleyEC2}).

Suppose $\lambda \vdash kn$ where $k \in \setN$.
Let $\defin{\SYT_k(\lambda)}$ be the set of standard Young tableaux such that 
every element in the descent set is a multiple of $k$.
Note that elements in $\SYT_k(\lambda)$ are in 
bijection with the set $\SSYT(\lambda,1^k 2^k \dotsb n^k)$.

\begin{theorem}\label{thm:F-expansion}
Suppose $\lambda \vdash kn$. We then have
\begin{equation}\label{eq:rowGessel}
\rowMap_k (\schur_{\lambda}(\xvec) )= \sum_{T \in \SYT_{k}(\lambda)} \gessel_{n, \frac{1}{k}\DES(T)}(\xvec)
\end{equation}
where $\frac{1}{k}\DES(T)$ denotes the set $\left\{\frac{j}{k} : j \in \DES(T)\right\}$.
\end{theorem}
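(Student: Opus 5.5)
The proof starts from Gessel's fundamental expansion of a Schur function,
\[
\schur_\lambda(\xvec) = \sum_{T \in \SYT(\lambda)} \gessel_{kn,\DES(T)}(\xvec),
\]
valid for $\lambda \vdash kn$, where $\gessel_{kn,D}$ denotes the fundamental quasisymmetric function indexed by the composition of $kn$ whose partial-sum set is $D$. Since $\rowMap_k$ is linear and extends to quasisymmetric functions, I apply it termwise and then use the preceding lemma on $\rowMap_k(\gessel_\alpha)$. This yields $\gessel_{\frac1k\alpha}$ if every part of $\alpha$ is divisible by $k$, and $0$ otherwise.

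Next I translate the divisibility condition into a condition on descent sets. Let $\alpha$ be the composition of $kn$ with $S_\alpha = \DES(T)$. Its parts are the successive differences of $0 < d_1 < \dots < d_m < kn$, where $\DES(T)=\{d_1,\dots,d_m\}$. All parts are divisible by $k$ if and only if every $d_i$ is divisible by $k$, because $kn$ itself is a multiple of $k$. So the surviving terms are exactly those with $T \in \SYT_k(\lambda)$.

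For such $T$, the composition $\frac1k\alpha$ is a composition of $n$ whose partial-sum set is $\{d_i/k\} = \frac1k\DES(T)$. Thus $\gessel_{\frac1k\alpha} = \gessel_{n,\frac1k\DES(T)}$, which is \eqref{eq:rowGessel}.

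The only point needing care is the claim that $\rowMap_k$ extends compatibly to quasisymmetric functions, so that applying it to the $\gessel$-expansion of the symmetric function $\schur_\lambda$ agrees with its definition on the monomial basis. I handle this by checking that the extension to monomial quasisymmetric functions, $\qmonomial_\beta \mapsto \qmonomial_{\frac1k\beta}$ or $0$, restricts to \eqref{eq:rowmapDef} on $\monomial_\mu = \sum_{\beta} \qmonomial_\beta$, where $\beta$ runs over rearrangements of $\mu$.

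That check goes as follows. All rearrangements of $\mu$ have parts divisible by $k$ exactly when $\mu$ does, and dividing by $k$ bijects the rearrangements of $\mu$ with those of $\frac1k\mu$. Equivalently, $\rowMap_k$ on polynomials keeps only monomials all of whose exponents are divisible by $k$ and then takes their $k$-th roots. Since this is a restriction-and-substitution on monomials, it commutes with every expansion.
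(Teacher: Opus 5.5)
Your proof is correct, but it runs the argument in the opposite order from the paper. The paper filters first and standardizes second. It writes $\rowMap_k(\schur_\lambda)$ as a sum over $\SSYT(\lambda,k\beta)$, and uses that the $k\beta_j$ cells containing $j$ form a horizontal strip to show that standardization lands in $\SYT_k(\lambda)$. It then declares the fiber sums to be $\gessel_{n,\frac1k\DES(T)}$ by a ``routine calculation''. Notably, it never invokes Gessel's expansion or the preceding lemma on $\rowMap_k(\gessel_\alpha)$.

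You standardize first, via Gessel's expansion, and filter second. The terms with $T\notin\SYT_k(\lambda)$ then die automatically by the vanishing case of the $\gessel_\alpha$-lemma, so no horizontal-strip argument is needed. The fiber computation the paper leaves implicit is exactly the content of that lemma: a weakly increasing index sequence in which every value occurs a multiple of $k$ times can only jump at multiples of $k$.

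The price of your route is that you must justify pushing $\rowMap_k$ through a quasisymmetric expansion, which the paper only asserts in passing. Your observation settles this cleanly: the monomial-level map (keep monomials whose exponents are all divisible by $k$, then take $k$-th roots) restricts to \eqref{eq:rowmapDef} on each $\monomial_\mu$. What your route buys is modularity, since it applies verbatim to any function with a known $\gessel$-expansion, such as skew Schur functions or $P$-partition generating functions. The paper's route instead stays inside tableau combinatorics and makes visible the correspondence between $\SYT_k(\lambda)$ and $\SSYT(\lambda,1^k2^k\dotsb n^k)$.
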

\begin{proof}
By definition,
\[
\rowMap_k (\schur_{\lambda}(\xvec) )= \sum_\beta
\sum_{T' \in \SSYT(\lambda, k\beta )} \xvec^{\beta},
\]
where the 
sum is over all $\beta = (\beta_1,\beta_2,\dotsc)$ with $\beta_i \geq 0$.
We now apply the standardization map
\[
\std: \{ \SSYT(\lambda, k\beta) : \beta = (\beta_1,\beta_2,\dotsc) \} \longrightarrow \SYT(\lambda).
\]
We shall now argue that the image of this map is in fact in $\SYT_k(\lambda)$.
For $T' \in \SSYT(\lambda, k\beta)$, the $k\beta_j$ cells containing $j$ form a horizontal 
strip within the tableau. This implies that in $\std(T')$, only cells 
where the entry is a multiple of $k$ can be in the descent set.

\begin{figure}[!ht]
\centering
\[
\ytableausetup{boxsize=1.2em,aligntableaux=center}
\begin{ytableau}
 1 & 1 & 1 & 1 & 1 & 1 & 2 & 2 &  *(pastelblue) 3 \\
 2 & 2 & 2 & 2 & 2 & *(pastelblue) 3 &  4 \\
 *(pastelblue) 3 & *(pastelblue) 3 & *(pastelblue) 3 & *(pastelblue) 3 & 4 \\
 4  & 5 & 5 & 5 \\
\end{ytableau}
\qquad
\longrightarrow
\qquad
\begin{ytableau}
 1 & 2 & 3 & 4 & 5 & *(lightgray) 6 &  12& *(lightgray) 13 & *(lightgray) 19 \\
 7 & 8 & 9 & 10 & 11 &  18 & *(lightgray) 22 \\
  14 &  15 &  16 &  17 & 21 \\
 20 & 23 & 24 & 25 \\
\end{ytableau}
\]
\caption{Left: A tableau $\SSYT(\lambda, k\beta)$ for $\lambda=(9,7,5,4)$, $\beta=(2,2,2,1,1)$,
and a horizontal strip highlighted. 
Right: The tableau under standardization, where all descents have been shaded.
}
\end{figure}

For $T \in \SYT_k(\lambda)$, let $C(T)$ be the set of semi-standard tableaux 
in $\SSYT(\lambda,k\beta)$ mapped to $T$ under standardization (for some $\beta$). 
We then have 
\[
\rowMap_k (\schur_{\lambda}(\xvec) )= \sum_{T \in \SYT_k(\lambda)}
\sum_{T' \in C(T)} \xvec^{w(T')/k}.
\]
It is now a routine calculation to verify that the inner sum is 
exactly the fundamental quasisymmetric function $\gessel_{n, \frac{1}{k} \DES(T)}(\xvec)$,
and we have proved \eqref{eq:rowGessel}.

\end{proof}

\subsection{Schur expansions}
We want to express these functions in the Schur basis and to obtain 
a positive combinatorial formula for the coefficients.

\begin{proposition}\label{prop:rowMapAdjSchurExp}
Let $\lambda/\mu$ be a skew shape of size $kn$ and $\rho \coloneqq (\ell-1,\ell-2,\dots,1,0)$,
where $\ell$ is the number of parts of $\lambda$. Then
\begin{equation}\label{eq:rowMapAdjSchurExp}
\rowAdj{k}\left(\schur_{\lambda/\mu}(\xvec)\right) 
= \sum_{\nu \vdash n} c^{k\lambda + (k-1)\rho}_{k\mu + (k-1)\rho, \nu} \, \schur_\nu(\xvec).
\end{equation}
In particular,
\begin{equation}
\rowAdj{k}(\schur_{\lambda}(\xvec)) = 
\sum_{\nu \vdash n}  
c^{k\lambda + (k-1)\rho}_{(k-1)\rho, \nu}  \, \schur_{\nu}(\xvec).
\end{equation}
\end{proposition}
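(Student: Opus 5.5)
The plan is to reduce everything to the Jacobi--Trudi identity. By Proposition~\ref{prop:adjMap} and its proof, $\rowAdj{k}(\completeH_\beta)=\completeH_{k\beta}$ for every partition $\beta$. So $\rowAdj{k}$ is the ring endomorphism of $\Lambda$ determined by $\completeH_r\mapsto \completeH_{kr}$. This is consistent with the conventions $\completeH_0=1$ and $\completeH_r=0$ for $r<0$, since $k\cdot 0=0$ and negative stays negative. In particular, $\rowAdj{k}$ commutes with taking determinants of matrices whose entries are complete homogeneous functions.

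\emph{Step 1.} Write $\ell=\ell(\lambda)$ and pad $\mu$ with zeros to length $\ell$. The Jacobi--Trudi identity gives
\[
\schur_{\lambda/\mu}=\det\bigl(\completeH_{\lambda_i-\mu_j-i+j}\bigr)_{1\le i,j\le \ell}.
\]
Applying the homomorphism, we get
\[
\rowAdj{k}(\schur_{\lambda/\mu})=\det\bigl(\completeH_{k(\lambda_i-\mu_j-i+j)}\bigr)_{1\le i,j\le \ell}.
\]

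\emph{Step 2.} Set $\Lambda\coloneqq k\lambda+(k-1)\rho$ and $M\coloneqq k\mu+(k-1)\rho$, so that $\Lambda_i=k\lambda_i+(k-1)(\ell-i)$ and $M_j=k\mu_j+(k-1)(\ell-j)$. A one-line computation gives
\[
\Lambda_i-M_j-i+j=k(\lambda_i-\mu_j)-(k-1)i+(k-1)j-i+j=k(\lambda_i-\mu_j-i+j).
\]
Hence the determinant from Step~1 is exactly the Jacobi--Trudi determinant for $\schur_{\Lambda/M}$. We must also check that $\Lambda/M$ is a genuine skew shape. Both $\Lambda$ and $M$ are weakly decreasing, indeed strictly decreasing when $k\ge 2$, because $\lambda,\mu$ are weakly decreasing and $\rho$ is strictly decreasing. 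Moreover $M_i\le \Lambda_i$ since $\mu_i\le\lambda_i$. Therefore $\rowAdj{k}(\schur_{\lambda/\mu})=\schur_{k\lambda+(k-1)\rho\,/\,k\mu+(k-1)\rho}$.

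\emph{Step 3.} Apply the Littlewood--Richardson rule (Theorem~\ref{thm:LRRule}) to this skew Schur function to obtain the stated expansion with coefficients $c^{k\lambda+(k-1)\rho}_{k\mu+(k-1)\rho,\nu}$. Here $\nu$ ranges over partitions of $|\Lambda/M|=k|\lambda/\mu|$. This is where the indexing in the statement needs care: since $\rowAdj{k}$ multiplies degree by $k$, the skew shape $\lambda/\mu$ should be of size $n$ and $\nu\vdash kn$. I would state the proof with the degrees matched this way. The special case $\mu=\emptyset$ gives $M=(k-1)\rho$ and the second formula.

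The only real obstacle is Step~1, namely justifying that $\rowAdj{k}$ is multiplicative so that it passes through the determinant. This follows because $\{\completeH_\beta\}$ is a basis and $\rowAdj{k}(\completeH_\beta)=\completeH_{k\beta}=\prod_i\completeH_{k\beta_i}$. This holds by the computation in the proof of Proposition~\ref{prop:adjMap}, which pins down $\rowAdj{k}$ on each $\completeH_\beta$ by duality with $\monomial_\alpha$. The remaining index bookkeeping, including that zero padding and out-of-range entries behave correctly, is routine.
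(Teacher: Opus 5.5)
Your proof is correct and follows the paper's argument: apply $\rowAdj{k}$ entrywise to the Jacobi--Trudi determinant, check that $\alpha_i-\beta_j-i+j=k(\lambda_i-\mu_j-i+j)$ for $\alpha=k\lambda+(k-1)\rho$ and $\beta=k\mu+(k-1)\rho$, and finish with the Littlewood--Richardson rule. Your degree remark is also right, since $\rowAdj{k}$ multiplies degree by $k$, so the statement is only consistent with $|\lambda/\mu|=n$ and $\nu\vdash kn$; your justifications that $\rowAdj{k}$ is multiplicative and that $\beta\subseteq\alpha$ fill in details the paper leaves implicit.
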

\begin{proof}
We only prove \eqref{eq:rowMapAdjSchurExp} since the second identity corresponds to $\mu=\emptyset$.
By the Jacobi--Trudi identity for skew Schur functions, we have
\begin{equation}\label{eq:rowMapAdj1}
\rowAdj{k}\left( \schur_{\lambda/\mu} \right) = 
\rowAdj{k}\left( \det\bigl(\completeH_{\lambda_i - \mu_j - i + j}\bigr)_{i,j} \right) 
= 
\det\bigl(\completeH_{k(\lambda_i - \mu_j - i + j)}\bigr)_{i,j} 
\end{equation}
where in the second identity we use Proposition~\ref{prop:adjMap}.
We now note that if we set 
\[
\alpha = k\lambda + (k-1)\rho \quad \text{ and }  \quad \beta  = k\mu   + (k-1)\rho,
\]
then $\alpha_i - \beta_j - i + j = k(\lambda_i - \mu_j - i + j)$.
Thus, applying the Jacobi--Trudi identity for skew Schur functions, we have that 
\begin{equation}\label{eq:rowMapAdj2}
\schur_{\alpha/\beta}
  = \det\left(\completeH_{\alpha_i - \beta_j - i + j}\right)_{i,j}
  = \det\left(\completeH_{k(\lambda_i - \mu_j - i + j)}\right)_{i,j}.
\end{equation}
By comparing \eqref{eq:rowMapAdj1} and \eqref{eq:rowMapAdj2}, we can deduce that
\[
\rowAdj{k}\left(\schur_{\lambda/\mu}\right)
  = \schur_{\,k\lambda + (k-1)\rho \,/\, k\mu + (k-1)\rho}.
\]
The Littlewood--Richardson rule applied to the 
right hand side then gives \eqref{eq:rowMapAdjSchurExp}.
\end{proof}

\begin{corollary}
For $k\geq 1$ we have the following expansions of Schur functions and skew Schur functions:
\begin{align}
\rowMap_k(\schur_{\lambda}(\xvec)) &= 
\sum_{\mu \vdash n}  c^{k\mu + (k-1)\rho}_{(k-1)\rho, \lambda}\schur_{\mu}(\xvec) \label{eq:rowMapOfSInS}
\\
\rowMap_k(\schur_{\lambda /\mu}(\xvec)) &= 
\sum_{\nu \vdash n}
\sum_{\theta \vdash |\lambda/\mu|}  
c^{k\nu + (k-1)\rho}_{(k-1)\rho, \theta} \cdot c^{\lambda}_{\mu,\theta} \, \schur_{\nu}(\xvec).  \label{eq:rowMapOfSkewSInS}
\end{align}
\end{corollary}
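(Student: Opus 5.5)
The plan is to obtain both expansions by duality from Proposition~\ref{prop:rowMapAdjSchurExp}, using the orthonormality of Schur functions and the adjointness from Proposition~\ref{prop:adjMap}.

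\emph{First identity.} Since $\{\schur_\mu\}_{\mu\vdash n}$ is an orthonormal basis of $\Lambda^n$, we have for $\lambda\vdash kn$
\[
\rowMap_k(\schur_\lambda)=\sum_{\mu\vdash n}\langle \rowMap_k(\schur_\lambda),\schur_\mu\rangle\,\schur_\mu
=\sum_{\mu\vdash n}\langle \schur_\lambda,\rowAdj{k}(\schur_\mu)\rangle\,\schur_\mu .
\]
Apply the second identity of Proposition~\ref{prop:rowMapAdjSchurExp} with the roles of the partitions swapped: $\mu\vdash n$ is the input and the output lies in degree $kn$. This gives
\[
\rowAdj{k}(\schur_\mu)=\sum_{\theta\vdash kn}c^{k\mu+(k-1)\rho}_{(k-1)\rho,\theta}\,\schur_\theta .
\]
Pairing with $\schur_\lambda$ then picks out the coefficient $c^{k\mu+(k-1)\rho}_{(k-1)\rho,\lambda}$, which is \eqref{eq:rowMapOfSInS}.

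\emph{Choice of $\rho$.} This is the one point requiring care, and it is where I expect the only real obstacle. In the proposition, $\rho$ depends on the number of parts $\ell$ of the input partition, here $\mu$. In the corollary, $\rho$ should be read with $\ell$ at least the length of $\mu$. I will argue that the coefficient is independent of the choice of $\ell\ge \ell(\mu)$. Padding $\mu$ with zeros leaves the Jacobi--Trudi determinant $\det(\completeH_{\mu_i-i+j})$ unchanged, since the added rows and columns form a unitriangular block. The skew shape $\alpha/\beta$ with $\alpha=k\mu+(k-1)\rho$ and $\beta=(k-1)\rho$ changes only by a shift that does not alter $\schur_{\alpha/\beta}$, by the same determinant argument. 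It is convenient to take $\ell$ large, for instance $\ell\ge \ell(\lambda)$, so that all the relevant shapes fit. Concretely, the proof of the proposition shows $\rowAdj{k}(\schur_\mu)=\schur_{\alpha/\beta}$ for any such $\ell$, and the Littlewood--Richardson rule (Theorem~\ref{thm:LRRule}) then reads off the coefficients.

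\emph{Second identity.} Expand $\schur_{\lambda/\mu}=\sum_{\theta}c^\lambda_{\mu,\theta}\schur_\theta$ by Theorem~\ref{thm:LRRule}, with $\theta\vdash|\lambda/\mu|=kn$. Then apply the linear map $\rowMap_k$ term by term, using \eqref{eq:rowMapOfSInS} for each $\schur_\theta$. Collecting coefficients of $\schur_\nu$ gives exactly the double sum in \eqref{eq:rowMapOfSkewSInS}.

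Alternatively, one could compute $\langle \schur_{\lambda/\mu},\rowAdj{k}\schur_\nu\rangle=\langle \schur_\lambda,\schur_\mu\,\schur_{\alpha/\beta}\rangle$. The term-by-term route is simpler, so that is the one I will follow.
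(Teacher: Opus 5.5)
Your proposal is correct and follows the paper's proof. The first identity comes from adjointness together with $\rowAdj{k}(\schur_\mu)=\schur_{(k\mu+(k-1)\rho)/(k-1)\rho}$ and the Littlewood--Richardson rule. The second comes from applying $\rowMap_k$ termwise to $\schur_{\lambda/\mu}=\sum_\theta c^{\lambda}_{\mu,\theta}\schur_\theta$. Your check that enlarging $\ell$ only translates the skew shape, so the coefficient does not depend on the choice of $\rho$, is a useful clarification that the paper leaves implicit. So is your correct reading of the degrees in the proposition (input of degree $n$, output of degree $kn$).
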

\begin{proof}
By Proposition~\ref{prop:adjMap}, and the Littlewood--Richardson rule,
we have 
\[
\langle \schur_\lambda, \rowAdj{k}(\schur_\mu) \rangle = \langle \schur_\lambda,\schur_{{k\mu + (k-1)\rho / (k-1)\rho}}\rangle=\langle \schur_{(k-1)\rho}\schur_\lambda,\schur_{{k\mu + (k-1)\rho }}\rangle
=c^{k\mu + (k-1)\rho}_{(k-1)\rho, \lambda},
\]
which immediately gives \eqref{eq:rowMapOfSInS}.
The Littlewood--Richardson rule states 
\[
\schur_{\lambda /\mu} = \sum_{\theta} c^{\lambda}_{\mu,\theta} \, \schur_\theta,
\]
so applying $\rowMap_k$ on both sides and using \eqref{eq:rowMapOfSInS},
we get 
\[
\rowMap_k\left(\schur_{\lambda /\mu} \right) = 
\sum_{\theta} c^{\lambda}_{\mu,\theta} \left(
\sum_{\nu}
 c^{k\nu + (k-1)\rho}_{(k-1)\rho, \theta} \, \schur_\nu \right).
\]
This gives the expansion in \eqref{eq:rowMapOfSkewSInS}.
\end{proof}

\begin{remark}
    The Schur-positivity of $\rowMap_k(\schur_\lambda)$ also follows from Stanley’s theorem~\cite[Theorem~2.4]{StanleyFlagSymmetric96}; see also~\cite[Exercise~7.45]{StanleyEC2}.
\end{remark}

\subsection{\texorpdfstring{$k$}{k}-Yamanouchi tableaux}
We now introduce a new family of tableaux generalizing the classical Yamanouchi tableaux from the Littlewood--Richardson rule, which will provide a direct combinatorial interpretation of the Schur expansion coefficients.

\begin{definition}[$k$-ballot and $k$-Yamanouchi]\label{def:Yamanouchi}
A word $w$ is called a \defin{$k$-ballot word} if every prefix 
contains at most $k-1$ more $i+1$'s than $i$s for each $i\geq 0$.
Words which are $1$-ballot are simply called \defin{ballot words}.
A semistandard Young tableau is called \defin{$k$-Yamanouchi} if its 
reverse reading word is a $k$-ballot word.
We let \defin{$\YSSYT_k(\lambda/\mu,\nu)$} be the set of $k$-Yamanouchi 
semistandard Young tableaux of skew shape $\lambda/\mu$ and content $\nu$. 
The number of $1$-Yamanouchi tableaux of given shape and type 
is a Littlewood--Richardson coefficient:
\[
 |\YSSYT_1(\lambda/\mu,\nu)| = c^{\lambda}_{\mu,\nu}.
\]
\end{definition}
\begin{example}
The tableau
\[
\ytableausetup{boxsize=0.95em}
\ytableaushort{1134,2245,35}
\]
has reading word $\mathtt{3522451134}$
and its reverse reading word is $\mathtt{4311542253}$.
This word is $2$-ballot, so the tableau is $2$-Yamanouchi.
\end{example}

The following proposition gives an alternate characterization of the $k$-ballot words.
\begin{proposition}\label{prop:ballotWord}
 A word $\wvec \in [n]^\ell$ is $k$-ballot if and only if 
\[
 1^{n (k-1)} \, 2^{(n-1)(k-1)} \, \dotsb \, n^{(k-1)}\, w_1 w_2 \dotsc w_\ell 
\]
is a ballot word. 
\end{proposition}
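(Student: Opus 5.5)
The plan is to compare prefix counts directly. Write $P = 1^{n(k-1)} 2^{(n-1)(k-1)} \dotsb n^{k-1}$ for the prepended word. For $1\le i\le n$ the letter $i$ occurs $(n-i+1)(k-1)$ times in $P$. Hence for each $i<n$ the letter $i$ occurs in $P$ exactly $k-1$ more times than $i+1$.

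For a word $u$ and letter $j$, let $N_j(u)$ be the number of occurrences of $j$ in $u$. Let $w^{(m)} = w_1\dotsb w_m$ denote a prefix of $\wvec$. The ballot condition on $P\wvec$ splits into two families of prefixes: prefixes of $P$ alone, and prefixes of the form $P\,w^{(m)}$ with $0\le m\le \ell$.

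\emph{Prefixes of $P$.} A prefix of $P$ has the form $1^{a_1}\dotsb j^{a_j}$, where $a_i$ is the full block length $(n-i+1)(k-1)$ for $i<j$ and $a_j$ is at most the full length. Every letter $i<j$ has its full count, and $i+1\le j$ has at most its full count, which is smaller. Letters beyond $j$ have count zero. So every prefix of $P$ is automatically ballot, and this family imposes no condition.

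\emph{Prefixes $P\,w^{(m)}$.} For $1\le i<n$ we have
\[
N_i(P w^{(m)}) - N_{i+1}(P w^{(m)}) = (k-1) + N_i(w^{(m)}) - N_{i+1}(w^{(m)}).
\]
This is nonnegative exactly when $N_{i+1}(w^{(m)}) - N_i(w^{(m)}) \le k-1$, which is precisely the $k$-ballot condition on the prefix $w^{(m)}$ for the pair $(i,i+1)$. For $i=n$, the letter $n+1$ does not occur, because $\wvec\in[n]^\ell$, so the condition is vacuous. For $i=0$ the condition is also vacuous, since there is no letter $0$ and the statement's "$i\ge 0$" can only impose a condition for $i\ge 1$. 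Thus $P\wvec$ is ballot if and only if $\wvec$ is $k$-ballot, and both directions follow from this single identity.

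There is no real obstacle here. The only care needed is the bookkeeping: checking that prefixes ending inside $P$ never fail, and treating the boundary index $i=n$ and the degenerate index $i=0$ explicitly.
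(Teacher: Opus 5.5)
Your proof is correct and follows the paper's argument: both use that the prepended word has exactly $k-1$ more copies of $i$ than of $i+1$, so the ballot inequality for the prefix $P\,w_1\cdots w_m$ is exactly the $k$-ballot inequality for $w_1\cdots w_m$, and your explicit check that prefixes ending inside $P$ are automatically ballot fills in a step the paper leaves implicit. One small correction: read literally, the clause $i=0$ is not vacuous (it would cap the number of $1$'s in every prefix at $k-1$, so $1$-ballot words could contain no $1$ at all), so it should be treated as a typo for $i\ge 1$ rather than dismissed on the grounds that no letter $0$ occurs.
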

\begin{proof}
Assume $\wvec=w_1\, w_2\, \dotsc \, w_\ell$ is a $k$-ballot and let 
\[
\uvec \coloneqq 1^{n (k-1)} \, 2^{(n-1)(k-1)} \, \dotsb \, n^{k-1}\, \wvec,
\]
where $\uvec$ is a word with $N+\ell$ letters in total.
Note that there are $(n-j+1)(k-1)$ entries equal to $j$ in the first $N$ letters of $\uvec$,
so whenever $1 \leq m \leq \ell$, we have 
\begin{align*}
  \lettercount_{j}(w_1\, w_2\, \dotsb \, w_{m}) &= \lettercount_{j}(u_1\, u_2\, \dotsb \, u_{N+m}) - (n-j+1)(k-1) , \\
\lettercount_{j+1}(w_1\, w_2\, \dotsb \, w_{m}) &= \lettercount_{j+1}(u_1\, u_2\, \dotsb \, u_{N+m}) - (n-j)(k-1) ,
\end{align*}
where $\lettercount_j(\cdot)$ denotes the number of $j$'s in the word.
\medskip 

By definition, $\wvec$ is $k$-ballot if and only if for all $m = 1,2,\dotsc,\ell$,
we have 
\[
 \lettercount_{j}(w_1\, w_2\, \dotsb \, w_m) + (k-1) \geq \lettercount_{j+1}(w_1\, w_2\, \dotsb \, w_m).
\]
This is equivalent to
\[
\lettercount_{j}(u_1\, \dotsb \, u_{N+m}) - (n-j+1)(k-1) + (k-1) \geq
 \lettercount_{j+1}(u_1\, \dotsb \, u_{N+m}) - (n-j)(k-1)
\]
which simplifies to
\[
\lettercount_{j}(u_1\, \dotsb \, u_{N+m}) \geq \lettercount_{j+1}(u_1\, \dotsb \, u_{N+m}).
\]
This is true for all $m$ exactly when $\uvec$ is a ballot word.
\end{proof}

\begin{proposition}\label{prop:kYamanouchiAsLRCoefficient}
The number of $k$-Yamanouchi semistandard tableaux 
in $\SSYT(\lambda,k\mu)$ is given by the Littlewood--Richardson coefficient 
\begin{equation}\label{eq:lrcoeff}
  c^{k\mu + (k-1)\rho}_{(k-1)\rho, \lambda}
\end{equation}
where $\rho$ is the staircase partition with the same number of parts as $\mu$.
\end{proposition}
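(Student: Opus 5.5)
Let $\ell$ be the number of parts of $\mu$ and $\rho=(\ell-1,\dots,1,0)$. By the symmetry $c^{\alpha}_{\beta,\gamma}=c^{\alpha}_{\gamma,\beta}$, the coefficient \eqref{eq:lrcoeff} equals $c^{k\mu+(k-1)\rho}_{\lambda,(k-1)\rho}$. By Theorem~\ref{thm:LRRule} (in the tableau form of Definition~\ref{def:Yamanouchi}), this is the number of Yamanouchi tableaux of skew shape $(k\mu+(k-1)\rho)/(k-1)\rho$ and content $\lambda$. That count is not obviously the one we want, so instead I would use the dual version of the Littlewood--Richardson rule, in which $c^{\alpha}_{\beta,\gamma}$ counts tableaux of shape $\gamma$ and content $\alpha-\beta$ whose reverse reading word, prefixed by the highest-weight word of $\beta$, is a lattice word. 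With $\alpha=k\mu+(k-1)\rho$, $\beta=(k-1)\rho$ and $\gamma=\lambda$, the content is $\alpha-\beta=k\mu$, which matches the $k\mu$ in the statement.

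\textbf{Dual LR rule.} Concretely, I would use the following standard formulation (Fulton, or the Kashiwara crystal/Remmel--Whitney version): for partitions $\beta\subseteq\alpha$ and $\gamma$,
\[
c^{\alpha}_{\beta,\gamma}=\#\{T\in\SSYT(\gamma,\alpha-\beta): \text{the word } 1^{\beta_1}2^{\beta_2}\cdots \ell^{\beta_\ell}\,w \text{ is ballot}\},
\]
where $w$ is the reverse reading word of $T$. This is the statement that $T$ is a highest-weight element of weight $\alpha$ after tensoring with the highest-weight element of $\beta$. If this form is not in the paper's cited references, I would derive it from the classical rule by the standard $\lambda/\mu$ versus $\mu*\nu$ symmetry. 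Alternatively, I would derive it from the known bijection between LR tableaux of shape $\alpha/\beta$ and content $\gamma$ and tableaux of shape $\gamma$ and content $\alpha-\beta$ that are ``$\beta$-dominant'' (the companion-tableau bijection).

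\textbf{Matching the prefix.} Here $\beta=(k-1)\rho$ has parts $(\ell-1)(k-1),(\ell-2)(k-1),\dots,0$. So the prefix is $1^{(\ell-1)(k-1)}\cdots(\ell-1)^{k-1}$. Proposition~\ref{prop:ballotWord} uses the prefix $1^{n(k-1)}\cdots n^{k-1}$, with one extra layer of $k-1$ on each letter. That extra layer changes every consecutive difference $\mathrm{count}_j-\mathrm{count}_{j+1}$ by $0$, except for the last letter $n$, where it gains $k-1$ copies. So I need to check that the two prefixes give the same ballot condition. Take the alphabet to be $[\ell]$, since $T$ has content $k\mu$ with $\ell$ parts. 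The differences $\mathrm{count}_j-\mathrm{count}_{j+1}$ for $j<\ell$ are the same $k-1$ in both prefixes. The only difference is the condition between $\ell$ and $\ell+1$, which is vacuous because no letter $\ell+1$ occurs. Hence Proposition~\ref{prop:ballotWord} with $n=\ell$, after adjusting the prefix by a constant block of $k-1$ copies of every letter (which does not change any ballot inequality), shows that the ballot condition is exactly that $w$ is $k$-ballot. In other words, $T$ is $k$-Yamanouchi in $\SSYT(\lambda,k\mu)$, and the count equals \eqref{eq:lrcoeff}.

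\textbf{Main obstacle.} The key step is the dual (shape-$\gamma$, content $\alpha-\beta$) form of the LR rule with a prefixed highest-weight word, together with its reading-direction conventions. The paper's $k$-ballot condition uses the \emph{reverse} reading word, so I must make sure the convention I cite reads columns and rows in the direction that yields lattice words under prefixing. I would fix this by checking the case $k=1$: there the prefix is empty and the statement reduces to $|\YSSYT_1(\lambda,\mu)|=c^{\mu}_{\emptyset,\lambda}=\delta_{\lambda\mu}$. This holds because a tableau of content $\mu$ with a lattice reverse reading word must be the superstandard tableau of shape $\mu$.
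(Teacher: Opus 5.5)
Your proof is correct, but it takes a different route from the paper.

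\textbf{The paper's route.} The paper builds an explicit bijection, the companion map. A $k$-Yamanouchi $T\in\SSYT(\lambda,k\mu)$ is sent to a filling $T'$ of $(k\mu+(k-1)\rho)/(k-1)\rho$ whose row $p$ lists the row indices of the occurrences of $p$ in $T$. The paper then checks two equivalences: column-strictness of $T'$ corresponds to the $k$-ballot condition, because consecutive rows of the inner shape are offset by exactly $k-1$; and the ballot condition on $T'$ corresponds to semistandardness of $T$.

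\textbf{Your route.} You quote the highest-weight (``$\beta$-dominant'') form of the LR rule. In it, the same $k-1$ appears as the multiplicity gap $\beta_j-\beta_{j+1}$ in the prefix word $1^{\beta_1}2^{\beta_2}\cdots$. What remains is exactly the computation of Proposition~\ref{prop:ballotWord}, which your argument uses more naturally than the paper's proof does. Your conventions are right. Putting the prefix first and using the reverse reading word is the statement that $u_\beta\otimes b$ is highest weight in $B(\beta)\otimes B(\gamma)$. For instance, it correctly gives $c^{(1,1,1)}_{(1),(2)}=0$, since $1\,3\,2$ is not ballot, whereas prefixing the ordinary reading word would wrongly give $1$.

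\textbf{Comparison.} Your route is shorter and more conceptual. It shows that $w$ is $k$-ballot exactly when $\varepsilon_i(w)\le k-1$ for all $i$. This condition is invariant under crystal isomorphisms, which is precisely the invariance the paper asserts without proof in Lemma~\ref{lem:skewYSSYTandLR}. The cost is reliance on an external form of the LR rule, whose standard proof is essentially this companion bijection anyway. The paper's argument is self-contained relative to Theorem~\ref{thm:LRRule} and yields an explicit bijection with LR tableaux.

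The symmetry step $c^{\alpha}_{\beta,\gamma}=c^{\alpha}_{\gamma,\beta}$ in your first paragraph is unnecessary. You end up using the coefficient in its original order, with $\beta=(k-1)\rho$ and $\gamma=\lambda$.
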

\begin{proof}
The coefficient in \eqref{eq:lrcoeff} counts the number of semistandard tableaux
of skew shape $(k\mu + (k-1)\rho) / (k-1)\rho$ with content $\lambda$, 
and with reverse reading word being a ballot word; see for example \cite[Thm. 4.9.4]{Sagan2001}.

Given a $k$-Yamanouchi tableau $T \in \SSYT(\lambda,k\mu)$, 
we construct a tableau $T'$ with the above properties as follows. 
For each cell $(i,j)$ of $T$ with entry $T_{ij}$, we place an entry equal to $i$ in row $T_{ij}$ of $T'$. 
This map is called the \defin{companion map} in the literature;
see \cite[Def. 15]{KaliszewskiMorse2019}.

Let $\ell=\ell(\mu)$, and let $\rho=(\ell,\ell-1,\dots,1)$. Set $\beta=(k-1)\rho$, so that 
$\beta_p=(k-1)(\ell-p+1)$. We show the bijection
\[
\YSSYT_k(\lambda,k\mu)
\longrightarrow
\YSSYT_1\left(
(k\mu+\beta)/\beta,\lambda
\right).
\]

For each $p$, collect
all cells of $T$ containing $p$, ordered according to reverse reading
order:
\[
c_{p,1},c_{p,2},\dots,c_{p,k\mu_p}.
\]
If the cell $c_{p,t}$ lies in row $i$ of $T$, then place the entry $i$
in the $t$-th cell of row $p$ of $T'$, after the initial shift
$\beta_p$. Equivalently, 
$T'(p,\beta_p+t)=\operatorname{row}(c_{p,t})$.

Since the $p$ appears $k\mu_p$ times in $T$, the $p$-th row of
$T'$ has $k\mu_p$ cells. Since this row is shifted by $\beta_p$, the
shape of $T'$ is $(k\mu+\beta)/\beta$.

Moreover, the number of entries equal to $i$ in $T'$ is exactly the number
of cells in row $i$ of $T$, namely $\lambda_i$. Hence $T'$ has content
$\lambda$.

We claim that $T'$ is semistandard. Indeed, fix a $p$.
The entries in row $p$ of $T'$ are the row indices of the cells of $T$
containing $p$, listed in reverse reading order. Therefore every row of $T'$ is weakly increasing. Also, we compare adjacent rows $p$ and $p+1$. Write $a_{p,t}=T'(p,\beta_p+t)$. Thus $a_{p,t}$ is the row index of the $t$-th occurrence of $p$ in the
reverse reading word of $T$. Since
\[
\beta_p-\beta_{p+1}=k-1,
\]
the $t$-th skew cell in row $p$ of $T'$ lies in the same column as the
$(t+k-1)$-st skew cell in row $p+1$. We claim that
\[
a_{p,t}<a_{p+1,t+k-1}.
\]
Let $w$ be the reverse reading word of $T$. Since $T$ is
$k$-Yamanouchi, the word $w$ is $k$-ballot by Proposition~\ref{prop:ballotWord}. Consider the prefix of $w$
ending immediately before the $t$-th occurrence of $p$. This prefix
contains $t-1$ occurrences of $p$. Then, we have
\[
\#(p+1)\leq \#p+(k-1).
\]
Therefore this prefix contains at most 
$(t-1)+(k-1)=t+k-2$ occurrences of $p+1$. Thus the $(t+k-1)$-th occurrence of $p+1$ appears
strictly after the $t$-th occurrence of $p$ in the reverse reading word
of $T$. The corresponding
the desired inequality. Thus $T'$ is
semistandard.

The reverse reading word of $T'$ is ballot. Consider an
arbitrary prefix of the reverse reading word of $T'$. Such a prefix corresponds to a set $A$ of cells of $T$ consisting of all cells
with entries less than some fixed $p$, together with an initial segment of the
cells with entry $p$. 

Clearly, $A$ is a Young diagram. Also, note that the number of entries equal to $i$ in the chosen prefix of the reverse
reading word of $T'$ is exactly the number of cells of $A$ lying in row
$i$ of $T$. Since $A$ is a Young diagram, its row lengths are weakly
decreasing. Therefore, for any $i$, $\# i \geq \#(i+1)$ in the prefix. Hence the reverse
reading word of $T'$ is ballot.

The construction is reversible. From such an LR tableau $T'$, the entries in row $p$ determine the rows in which the letters $p$ are placed in the original tableau $T$, while the reverse reading order determines their positions. The preceding arguments, applied in reverse, show that the resulting $T$ is semistandard and $k$-Yamanouchi.

Therefore
\[
\left|\YSSYT_k(\lambda,k\mu)\right|
=
\left|
\YSSYT_1\left(
(k\mu+(k-1)\rho)/(k-1)\rho,\lambda
\right)
\right|.
\]

\end{proof}

\begin{example}
Consider $k=2$, $\lambda = (6,3,1)$, and $\mu=(2,1,1,1)$.
There are four $k$-Yamanouchi semistandard tableaux in $\SSYT(\lambda,k\mu)$
and four Littlewood--Richardson tableaux of skew shape $8543/4321$ and content $631$.
The companion map gives the bijection below:
\[
\ytableausetup{boxsize=0.9em}
\begin{array}{cccc}
\ytableaushort{111134,224,3} & \ytableaushort{111124,233,4} 
& \ytableaushort{111123,244,3}&\ytableaushort{111123,234,4} \\
 \; & \; & \; & \;  \\
\downarrow & \downarrow & \downarrow & \downarrow \\
\; & \; & \; & \; \\
\ytableaushort{{\none}{\none}{\none}{\none}1111,{\none}{\none}{\none}22,{\none}{\none}13,{\none}12} &
\ytableaushort{{\none}{\none}{\none}{\none}1111,{\none}{\none}{\none}12,{\none}{\none}22,{\none}13} &
\ytableaushort{{\none}{\none}{\none}{\none}1111,{\none}{\none}{\none}12,{\none}{\none}13,{\none}22} & \ytableaushort{{\none}{\none}{\none}{\none}1111,{\none}{\none}{\none}12,{\none}{\none}12,{\none}23}
\end{array}
\]
\end{example}

\begin{lemma}\label{lem:skewYSSYTandLR}
For a skew shape $\lambda/\mu$ of size $nk$, and $ \nu \vdash n$, we have
\begin{equation}
|\YSSYT_k(\lambda/\mu, k\nu) | = 
\sum_{\theta \vdash nk}  
 c^{\lambda}_{\mu,\theta} \cdot  |\YSSYT_k(\theta, k\nu)|.
\end{equation}
\end{lemma}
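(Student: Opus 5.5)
Both sides count ballot tableaux, so the identity should follow from the standard crystal (jeu de taquin) argument underlying the Littlewood--Richardson rule. We also use the reformulation of $k$-ballotness from Proposition~\ref{prop:ballotWord}.

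Recall that rectification by jeu de taquin gives a map from $\SSYT(\lambda/\mu)$ to tableaux of straight shape. For each straight tableau $S$ of shape $\theta$, the number of skew tableaux of shape $\lambda/\mu$ rectifying to $S$ is $c^{\lambda}_{\mu,\theta}$, independent of $S$. The plan is therefore to prove the following claim.

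\emph{Claim.} A tableau $T\in\SSYT(\lambda/\mu,k\nu)$ is $k$-Yamanouchi if and only if its rectification $\mathrm{rect}(T)$ is $k$-Yamanouchi.

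Granting the claim, we sort $\YSSYT_k(\lambda/\mu,k\nu)$ by rectification. Rectification preserves content, so $\mathrm{rect}(T)$ has content $k\nu$ and some straight shape $\theta\vdash nk$. Each $S\in\YSSYT_k(\theta,k\nu)$ then has exactly $c^\lambda_{\mu,\theta}$ preimages, all of which are $k$-Yamanouchi by the claim. Summing over $\theta$ gives the stated formula.

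To prove the claim, use the fact that the reverse reading word of $T$ determines the ballot-type behaviour, and translate via Proposition~\ref{prop:ballotWord}. Write $m$ for the largest letter and $P=1^{m(k-1)}2^{(m-1)(k-1)}\dotsb m^{k-1}$. Then $\wvec$ is $k$-ballot if and only if $P\wvec$ is ballot.

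Being ballot is a property of highest weight elements in the crystal of words. Equivalently, $P\wvec$ is ballot if and only if $\wvec$ is a highest weight word relative to the prefix $P$, meaning that the raising operators $e_i$ act on $P\wvec$ without touching $\wvec$ in a way that would fail. We reverse the word to match the paper's reverse-reading convention; the crystal operators on reversed words are the usual ones after the substitution $i\mapsto m+1-i$, so no information is lost.

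The needed invariance then follows from two standard facts:
\begin{itemize}
\item Jeu de taquin slides on $T$ change its reading word by Knuth moves.
\item Knuth moves commute with the crystal operators, so Knuth-equivalent words have isomorphic positions in their crystals.
\end{itemize}
Concatenation with the fixed word $P$ respects Knuth equivalence: if $\wvec\equiv\wvec'$ then $P\wvec\equiv P\wvec'$. Ballotness of $u$ says exactly that $u$ is Knuth equivalent to the Yamanouchi superstandard tableau of its content, equivalently that $e_i(u)=0$ for all $i$, and this depends only on the Knuth class. Hence $P\,\mathrm{rw}(T)$ is ballot if and only if $P\,\mathrm{rw}(\mathrm{rect}\,T)$ is ballot.

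The main obstacle is handling the word-orientation conventions carefully. Ballotness here is imposed on the reverse reading word, and the prefix $P$ is prepended to that reverse word. I would work entirely with reverse reading words and use that reversal combined with the complement $i\mapsto m+1-i$ is an anti-automorphism preserving Knuth equivalence. If this becomes messy, a fallback is to check the claim directly on a single elementary jeu de taquin slide, by verifying the effect on reading words through Knuth relations. Another fallback is to invoke Proposition~\ref{prop:kYamanouchiAsLRCoefficient} together with associativity of Littlewood--Richardson coefficients, using $\schur_{\lambda/\mu}=\sum_\theta c^\lambda_{\mu,\theta}\schur_\theta$.
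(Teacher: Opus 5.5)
Your proposal is correct and follows the paper's proof. Both sort $\YSSYT_k(\lambda/\mu,k\nu)$ by jeu de taquin rectification, use that each tableau of shape $\theta$ has exactly $c^{\lambda}_{\mu,\theta}$ skew preimages, and show that rectification preserves $k$-Yamanouchiness; your reduction through Proposition~\ref{prop:ballotWord} actually justifies that last step, which the paper only gestures at via ``compatibility with the crystal structure''. The orientation issue you flag resolves without complementation: if $w$ is the reverse reading word of $T$, then $Pw$ is ballot if and only if the ordinary reading word of $T$ followed by the reversal of $P$ is a reverse lattice word, and since Knuth equivalence is a congruence that preserves reverse lattice words, this property is invariant under rectification.
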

\begin{proof}
We utilize the classical bijection given by \defin{jeu de taquin rectification},
see \cite{Butler1994,Lothaire2002} for background.
For a skew shape $\lambda/\mu$ and content $\alpha$, the map
\[
\Phi: \SSYT(\lambda/\mu, \alpha) \longrightarrow \bigsqcup_{\theta \vdash |\alpha|} \mathrm{LR}(\lambda/\mu, \theta) \times \SSYT(\theta, \alpha)
\]
defined by $T \mapsto (R, S)$, 
where $S = \mathrm{rect}(T)$ and $R$ is the recording tableau, is a bijection. 
Here, $\mathrm{LR}(\lambda/\mu, \theta)$ denotes 
the set of Littlewood--Richardson tableaux of shape $\lambda/\mu$ and content $\theta$, 
the cardinality of which is $c^{\lambda}_{\mu,\theta}$.

It remains to show that this bijection restricts to the set of $k$-Yamanouchi tableaux. 
A fundamental property of the rectification map is that the reading word of $T$ is 
Knuth equivalent (plactic equivalent) to the reading word of $S$. 
The set of reading words of shape $\lambda/\mu$ decomposes into connected components 
of the crystal graph, where each component is isomorphic to the crystal of a 
straight shape $\theta$ with multiplicity $c^{\lambda}_{\mu,\theta}$.
Since the property of being $k$-ballot is defined by the counts of letters in prefixes, 
it is compatible with the crystal structure 
(specifically, the number of $k$-ballot words is an invariant 
of the crystal class $\theta$).
Thus, $T$ is $k$-Yamanouchi if and only if its rectification $S$ 
is $k$-Yamanouchi and its recording tableau $R$ is a 
Littlewood--Richardson tableau.
Summing over all possible shapes $\theta$, we obtain:
\[
|\YSSYT_k(\lambda/\mu, k\nu)| = 
\sum_{\theta \vdash nk} |\mathrm{LR}(\lambda/\mu, \theta)| \cdot |\YSSYT_k(\theta, k\nu)|,
\]
which yields the desired formula.
\end{proof}

\begin{theorem}\label{thm:schurExpansion}
Suppose $\lambda/\mu$ is a skew shape of size $nk$. 
Then we have the expansion
\begin{equation}
\rowMap_k(\schur_{\lambda/\mu}(\xvec)) = 
\sum_{\nu \vdash n} \; |\YSSYT_k(\lambda/\mu, k\nu)| \cdot \schur_{\nu}(\xvec). 
\end{equation}
In particular,
\begin{equation}
\rowMap_k(\schur_{\lambda}(\xvec)) = 
\sum_{\mu \vdash n} \; |\YSSYT_k(\lambda, k\mu) | \cdot \schur_{\mu}(\xvec).
\end{equation}
\end{theorem}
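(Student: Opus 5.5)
The plan is to combine three results proved above: the Corollary giving the Schur expansions \eqref{eq:rowMapOfSInS} and \eqref{eq:rowMapOfSkewSInS}, Proposition~\ref{prop:kYamanouchiAsLRCoefficient}, and Lemma~\ref{lem:skewYSSYTandLR}. I would treat the straight-shape case first and then deduce the skew case from it. No new combinatorics should be needed; the work is mostly in matching indices and checking conventions.

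For the straight shape $\lambda \vdash nk$, \eqref{eq:rowMapOfSInS} says that the coefficient of $\schur_\mu$ in $\rowMap_k(\schur_\lambda)$ is $c^{k\mu+(k-1)\rho}_{(k-1)\rho,\lambda}$. Proposition~\ref{prop:kYamanouchiAsLRCoefficient} identifies this Littlewood--Richardson coefficient with $|\YSSYT_k(\lambda,k\mu)|$.

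There is a small consistency check on $\rho$. In the Corollary, $\rho$ is a staircase whose length is tied to the partitions involved. In Proposition~\ref{prop:kYamanouchiAsLRCoefficient}, $\rho$ has $\ell(\mu)$ parts, and the proof there uses $\rho=(\ell,\dots,1)$. I would note that the coefficient $c^{k\mu+(k-1)\rho}_{(k-1)\rho,\lambda}$ does not depend on which staircase is used, provided it has at least $\ell(\mu)$ parts. The reason is that adding a constant $c$ to every row of both $\alpha$ and $\beta$ over the first $m$ rows, with trailing zeros allowed, leaves the skew shape $\alpha/\beta$ unchanged up to translation. The Jacobi--Trudi determinant $\det(\completeH_{\alpha_i-\beta_j-i+j})$ is likewise unchanged, since $\completeH_r=0$ for $r<0$ handles the extra rows. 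With this remark the two coefficients agree, and the second displayed formula of the theorem follows.

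For the skew case, I would start from \eqref{eq:rowMapOfSkewSInS}. The coefficient of $\schur_\nu$ is
\[
\sum_{\theta \vdash nk} c^{\lambda}_{\mu,\theta}\, c^{k\nu+(k-1)\rho}_{(k-1)\rho,\theta}.
\]
By the straight-shape identification just established, this equals $\sum_{\theta} c^{\lambda}_{\mu,\theta}\,|\YSSYT_k(\theta,k\nu)|$. Lemma~\ref{lem:skewYSSYTandLR} then turns this sum into $|\YSSYT_k(\lambda/\mu,k\nu)|$, which is the first displayed formula.

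The only real obstacle is making sure the hypotheses line up. The main point is the staircase-length normalization above: the $\rho$ in the Corollary is defined from $\ell(\lambda)$ in Proposition~\ref{prop:rowMapAdjSchurExp}, whereas Proposition~\ref{prop:kYamanouchiAsLRCoefficient} uses $\ell(\mu)$. I would settle this by the translation-invariance argument for the skew shape $(k\mu+(k-1)\rho)/(k-1)\rho$. I would also justify that only $\theta \vdash nk$ contribute in the sum, which holds because $c^{\lambda}_{\mu,\theta}=0$ unless $|\theta|=|\lambda/\mu|$.
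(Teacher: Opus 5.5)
Your proposal is correct and matches the paper's proof. The paper likewise combines \eqref{eq:rowMapOfSkewSInS} with Lemma~\ref{lem:skewYSSYTandLR}, using Proposition~\ref{prop:kYamanouchiAsLRCoefficient} implicitly to identify $c^{k\nu+(k-1)\rho}_{(k-1)\rho,\theta}$ with $|\YSSYT_k(\theta,k\nu)|$. Your remark on the staircase is a valid consistency check that the paper leaves implicit: it uses both $(\ell-1,\dots,0)$ and $(\ell,\dots,1)$, and either choice only translates the skew shape $(k\mu+(k-1)\rho)/(k-1)\rho$.
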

\begin{proof} 
This follows from Lemma~\ref{lem:skewYSSYTandLR}
and \eqref{eq:rowMapOfSkewSInS}.
\end{proof}

\begin{example}
Let us consider all $2$-Yamanouchi tableaux of shape $(4,4,2)$
and with partition contents $(4,4,2)$, $(4,2,2,2)$ and $(2,2,2,2,2)$.
These are the following tableaux:
\medskip 

\begin{center}
\ytableausetup{boxsize=0.95em}
\begin{tabular}{lccccccc}
\toprule
Content & Tableaux \\
\midrule 
442 & $\ytableaushort{1111,2222,33}$ \\ 
4222 & $\ytableaushort{1111,2234,34}$ \\
22222 & $\ytableaushort{1134,2255,34}$ & $\ytableaushort{1134,2245,35}$ &
$\ytableaushort{1124,2355,34}$ & $\ytableaushort{1124,2335,45}$ &
$\ytableaushort{1123,2445,35}$ & $\ytableaushort{1123,2345,45}$ & 
$\ytableaushort{1123,2344,55}$ \\
\bottomrule
\end{tabular}
\end{center}
\medskip 

There is a deliberate choice in this example---we have the Schur expansion 
\[
\rowMap_2(\schur_{442} ) = \schur_{221} + \schur_{2111} + 7 \schur_{11111},
\]
and the coefficients in the right hand side correspond to the $2$-Yamanouchi tableaux.
\end{example}

\section{The expansion of \texorpdfstring{$\rowMap_k(\elementaryE_{\mu})$}{rowDiv of elementary}}\label{rowmap_kOne}

\begin{definition}
Let $\mu\vdash kn$ and suppose 
$\rowMap_k(\elementaryE_\mu)\neq 0$.
We then have
\begin{equation}\label{eq:minMonomialCoeffDef}
\rowMap_k(\elementaryE_\mu) = a_{\mu,\kappa'}\monomial_{\kappa'} + 
\sum_{\beta \dominates \kappa'}
a_{\mu,\beta'} \monomial_{\beta'}
\end{equation}
for some $a_{\mu,\kappa'} \neq 0$, where $\kappa \vdash n$ 
and the sum ranges over all partitions strictly greater 
than  $\kappa'$ in dominance order.
We say that $a_{\mu,\kappa'}$ is the \defin{minimal coefficient} in \eqref{eq:minMonomialCoeffDef}.
Observe that this minimal coefficient also appears in the $\elementaryE$-expansion---if we have
\begin{equation}\label{eq:minCoeffDef}
\rowMap_k(\elementaryE_\mu) = c_{\mu,\kappa}\elementaryE_{\kappa} + 
\sum_{\beta \dominates \kappa}
c_{\mu,\beta} \elementaryE_{\beta}
\end{equation}
then $a_{\mu,\kappa'} =  c_{\mu,\kappa}$.
This follows more or less from Lemma~\ref{lem:e-in-m}.
For example, 
\begin{align*}
 \rowMap_{2}\left(\elementaryE_{432221}\right) 
 &= 7 \monomial_{3211} + 108 \monomial_{31111} + 87 \monomial_{2221} + 1298 \monomial_{22111} + 17040 \monomial_{211111} + 216300 \monomial_{1111111} \\
  &= 
 7\elementaryE_{421} +
 66 \elementaryE_{43} + 
 80 \elementaryE_{511} + 
 863 \elementaryE_{52} +
 10940 \elementaryE_{61} +
 115192 \elementaryE_{7},
\end{align*}
where $a_{432221,3211}=c_{432221,421}=7$.
\end{definition}

\subsection{Schur expansions}
The following lemma is a standard exercise in symmetric function theory.
\begin{lemma}\label{lem:sumOfECoeffsIsSchurCoeff}
If $f$ is a homogeneous symmetric function of degree $n$ with
\[
f(\xvec) = \sum_{\alpha \vdash n} c_\alpha \elementaryE_\alpha(\xvec) = \sum_{\beta \vdash n} d_\beta \schur_\beta(\xvec),
\]
then $\sum_{\alpha} c_\alpha = d_{1^n}$.
\end{lemma}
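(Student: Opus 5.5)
The plan is to extract the coefficient $d_{1^n}$ with the Hall inner product and then compute the pairing against each $\elementaryE_\alpha$ separately. By orthonormality of the Schur basis, $d_{1^n} = \langle f, \schur_{1^n}\rangle$. Since $\schur_{1^n} = \elementaryE_n$, this becomes
\[
 d_{1^n} = \sum_{\alpha\vdash n} c_\alpha \langle \elementaryE_\alpha, \elementaryE_n\rangle .
\]
It therefore suffices to prove that $\langle \elementaryE_\alpha, \elementaryE_n\rangle = 1$ for every $\alpha \vdash n$.

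To evaluate this pairing, I would use the involution $\omega$, which is an isometry for the Hall inner product and sends $\elementaryE_\lambda$ to $\completeH_\lambda$. This gives
\[
\langle \elementaryE_\alpha, \elementaryE_n\rangle = \langle \completeH_\alpha, \completeH_n\rangle .
\]
By the monomial expansion $\completeH_n = \sum_{\beta\vdash n} \monomial_\beta$ and the defining duality $\langle \monomial_\beta, \completeH_\alpha\rangle = \delta_{\alpha\beta}$, the right-hand side equals $1$.

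Alternatively, one can avoid $\omega$. The coefficient $\langle \elementaryE_\alpha, \schur_{1^n}\rangle$ equals $K_{(1^n)',\alpha}$ via the Kostka expansion of $\elementaryE_\alpha$ in Schur functions. Here $(1^n)' = (n)$, and $K_{(n),\alpha}=1$ because there is exactly one row tableau of any given content.

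Summing over $\alpha$ then gives $d_{1^n} = \sum_\alpha c_\alpha$. There is no real obstacle. The only point needing care is to justify the facts used ($\omega$ is an isometry, $\completeH_n=\sum_\beta \monomial_\beta$, or the Kostka expansion of $\elementaryE_\alpha$). These are standard and can be cited from \cite{StanleyEC2}.
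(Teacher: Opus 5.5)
Your proof is correct and is essentially the paper's argument: the paper applies $\omega$ to $f$ first and then pairs with $\schur_{(n)}$, using $\langle \completeH_\alpha, \schur_{(n)}\rangle = K_{(n),\alpha} = 1$, which is your Kostka variant up to applying $\omega$. Your other variant, via $\completeH_n = \sum_\beta \monomial_\beta$, is an equally valid way to evaluate the same pairing.
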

\begin{proof}
We may first apply the $\omega$ involution, so that it suffices to show that if 
\[
\omega f(\xvec) =
\sum_{\alpha \vdash n} c_\alpha \completeH_\alpha(\xvec) = \sum_{\beta \vdash n} d_\beta \schur_{\beta'}(\xvec),
\]
then $d_{(n)}$ is the sum of the $c_{\alpha}$s.
We now apply the scalar product $\langle \bullet, \schur_{(n)} \rangle$ on both sides, so need to show 
\[
 \sum_{\alpha \vdash n} c_\alpha 
 \langle \completeH_\alpha, \schur_{(n)} \rangle 
 = d_{(n)}.
\]
But $\langle \completeH_\alpha, \schur_{(n)} \rangle = K_{(n),\alpha} = 1$ so we are done.
\end{proof}

\begin{definition}[Binary contingency matrix]\label{def:bcm}
A \defin{binary contingency matrix} with marginals $\lambda = (\lambda_1,\dotsc,\lambda_r)$ and 
$\mu = (\mu_1,\dotsc,\mu_c)$, is an $r \times c$ matrix where entries are in $\{0,1\}$
and the row sums are given by $\lambda$, and column sums given by $\mu$.
We denote the set of such matrices by \defin{$\BCM(\lambda,\mu)$}.
\end{definition}

\begin{lemma}[{See \cite[Thm. 7.4.4]{StanleyEC2}}]\label{lem:e-in-m}
The expansion of $\elementaryE_\lambda$ in the monomial basis is given by
\begin{equation*}
    \elementaryE_\lambda = \sum_{ \mu \dominatesBy \lambda'} |\BCM(\lambda,\mu)| \, \monomial_{\mu},
\end{equation*}
where $\BCM(\lambda,\mu)$ is as in Definition~\ref{def:bcm}.
\end{lemma}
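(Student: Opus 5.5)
The plan is to compute the coefficient $\langle \elementaryE_\lambda, \completeH_\mu\rangle$ directly and then read off the monomial expansion. Since the Hall inner product pairs $\monomial$ and $\completeH$ dually, we have
\[
\elementaryE_\lambda = \sum_{\mu} \langle \elementaryE_\lambda, \completeH_\mu\rangle\, \monomial_\mu ,
\]
so the coefficient of $\monomial_\mu$ is the coefficient of $x^\mu = x_1^{\mu_1}x_2^{\mu_2}\cdots$ in $\elementaryE_\lambda$. We extract that coefficient by expanding the product.

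Write $\elementaryE_\lambda = \prod_{i=1}^{r} \elementaryE_{\lambda_i}$. Each factor $\elementaryE_{\lambda_i}$ is a sum over $\lambda_i$-subsets $S_i$ of the variable indices, with the term $\prod_{j\in S_i} x_j$. A term of the product is therefore a choice of subsets $(S_1,\dots,S_r)$. Encode this choice as the $0/1$ matrix $A$ with $A_{ij}=1$ if and only if $j\in S_i$. Then the row sums of $A$ are the $\lambda_i$. The monomial produced is $\prod_j x_j^{\sum_i A_{ij}}$, so its exponent vector is the vector of column sums. This correspondence is bijective. Hence the coefficient of $x^\mu$ equals the number of $0/1$ matrices with row sums $\lambda$ and column sums $\mu$ (restricting to the first $\ell(\mu)$ columns, the others being zero). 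That number is $|\BCM(\lambda,\mu)|$.

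It remains to show the summation range $\mu \dominatesBy \lambda'$, i.e.\ that $\BCM(\lambda,\mu)$ is empty unless $\mu \dominatesBy \lambda'$. This is the only non-routine step, and it is the easy direction of Gale--Ryser. Take $A\in\BCM(\lambda,\mu)$ and any $j$. Consider the first $j$ columns, whose total is $\mu_1+\dots+\mu_j$. Row $i$ contributes at most $\min(\lambda_i,j)$ ones to these columns. Also $\sum_i \min(\lambda_i,j)=\lambda'_1+\dots+\lambda'_j$, since both count the cells of the diagram of $\lambda$ in its first $j$ columns. This gives $\mu_1+\dots+\mu_j\le \lambda'_1+\dots+\lambda'_j$ for every $j$. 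The totals agree because both equal $|\lambda|$, so $\mu\dominatesBy\lambda'$.

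Finally, $\monomial_\mu$ with $\mu\not\dominatesBy\lambda'$ has zero coefficient, so restricting the sum loses nothing.
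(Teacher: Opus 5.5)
Your proof is correct and is essentially the standard argument from the cited source (Stanley, EC2, \S7.4), which the paper invokes without reproducing. That argument expands $\prod_i \elementaryE_{\lambda_i}$ to identify the coefficient of $\xvec^\mu$ with $|\BCM(\lambda,\mu)|$, and then gets $\mu \dominatesBy \lambda'$ by bounding the ones of row $i$ in the first $j$ columns by $\min(\lambda_i,j)$; your detour through $\langle \elementaryE_\lambda, \completeH_\mu\rangle$ is harmless but unnecessary, since the coefficient of $\monomial_\mu$ is simply the coefficient of $\xvec^\mu$.
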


Consider any total order on the partitions of $n$ that extends the dominance order.
For example, for $n=4$ we may take the total order
$1111$, $211$, $22$, $31$, $4$.
By Lemma~\ref{lem:e-in-m}, the matrix
\begin{equation}\label{eq:e-in-m-matrix}
   \left( \; |\BCM(\lambda',\mu)| \;  \right)_{\lambda,\mu}
\end{equation}
is lower-triangular. For example, $|\BCM(31',211)|=5$, and the full matrix is
\[
\left(
\begin{array}{ccccc}
 1 & 0 & 0 & 0 & 0 \\
 4 & 1 & 0 & 0 & 0 \\
 6 & 2 & 1 & 0 & 0 \\
 12 & \mathbf{5} & 2 & 1 & 0 \\
 24 & 12 & 6 & 4 & 1 \\
\end{array}
\right).
\]

\begin{theorem}\label{thm:emuSchurExpansion}
With $\mu \vdash kn$, we have
\begin{equation}\label{eq:emuInSchur}
\rowMap_k(\elementaryE_{ \mu }) = \sum_{\lambda \vdash n}
K_{a_\lambda',\,\mu}
\cdot \schur_\lambda,
\end{equation}
where $\defin{a_\lambda }\coloneqq (k\lambda + (k-1)\rho) / (k-1)\rho$
and $K_{(\alpha/\beta)',\gamma}$ denotes the number
of semi-standard Young tableaux of type $\gamma$ and skew shape
$(\alpha/\beta)'$.
In the special case $\mu = 1^{kn}$,
\begin{equation}\label{eq:e1knInSchur}
      \rowMap_k(\elementaryE_{1^{k n}}) = \sum_{\lambda \vdash n}
      f^{a_\lambda} \cdot \schur_\lambda,
\end{equation}
where $f^{\alpha/\beta}$ is the number of
standard Young tableaux of skew shape $\alpha/\beta$.
\end{theorem}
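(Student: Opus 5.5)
The plan is to compute the Schur coefficients by duality, using the adjoint $\rowAdj{k}$ from Proposition~\ref{prop:adjMap} together with the skew Schur identity established in the proof of Proposition~\ref{prop:rowMapAdjSchurExp}. Since the Schur functions are orthonormal, the coefficient of $\schur_\lambda$ in $\rowMap_k(\elementaryE_\mu)$ is
\[
\langle \rowMap_k(\elementaryE_\mu), \schur_\lambda \rangle = \langle \elementaryE_\mu, \rowAdj{k}(\schur_\lambda) \rangle .
\]
The proof of Proposition~\ref{prop:rowMapAdjSchurExp} (taking the inner shape empty) shows $\rowAdj{k}(\schur_\lambda) = \schur_{(k\lambda+(k-1)\rho)/(k-1)\rho} = \schur_{a_\lambda}$. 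So the coefficient equals $\langle \elementaryE_\mu, \schur_{a_\lambda}\rangle$.

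Next I apply the involution $\omega$, which is an isometry for the Hall inner product. It satisfies $\omega(\elementaryE_\mu) = \completeH_\mu$ and $\omega(\schur_{\alpha/\beta}) = \schur_{(\alpha/\beta)'}$. This gives
\[
\langle \elementaryE_\mu, \schur_{a_\lambda}\rangle = \langle \completeH_\mu, \schur_{a_\lambda'} \rangle .
\]
Since $\{\completeH_\mu\}$ and $\{\monomial_\mu\}$ are dual bases, this pairing equals the coefficient of $\monomial_\mu$ in $\schur_{a_\lambda'}$. By the tableau definition \eqref{eq:skewSchurDef} of skew Schur functions, that coefficient is the number of semistandard tableaux of shape $a_\lambda'$ and content $\mu$, namely $K_{a_\lambda',\mu}$. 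This proves \eqref{eq:emuInSchur}.

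For \eqref{eq:e1knInSchur}, I specialize to $\mu = 1^{kn}$. A semistandard tableau of content $1^{kn}$ is a standard Young tableau. Transposition is a bijection between standard tableaux of shape $a_\lambda'$ and of shape $a_\lambda$. Hence $K_{a_\lambda',1^{kn}} = f^{a_\lambda'} = f^{a_\lambda}$.

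No step is genuinely hard. The only points needing care are that $\rho$ is defined with enough parts to cover $\ell(\lambda)$ (so the shape $a_\lambda$ is well defined), and that the identity $\rowAdj{k}(\schur_\lambda)=\schur_{a_\lambda}$ is taken exactly from the Jacobi--Trudi argument in Proposition~\ref{prop:rowMapAdjSchurExp}.
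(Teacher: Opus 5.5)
Your proposal is correct and follows the paper's proof essentially step for step. Adjointness reduces the coefficient to $\langle \elementaryE_\mu, \rowAdj{k}(\schur_\lambda)\rangle$, and Proposition~\ref{prop:rowMapAdjSchurExp} gives $\rowAdj{k}(\schur_\lambda)=\schur_{a_\lambda}$; applying $\omega$ then yields $\langle \completeH_\mu, \schur_{a_\lambda'}\rangle = K_{a_\lambda',\mu}$, and the case $\mu=1^{kn}$ follows by conjugation of standard tableaux, exactly as in the paper.
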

\begin{proof}
By the definition of the adjoint, for any $\lambda \vdash n$,
\[
[\schur_\lambda]\, \rowMap_k(\elementaryE_\mu)
= \langle \elementaryE_\mu,\, \rowAdj{k}(\schur_\lambda) \rangle.
\]
By Proposition~\ref{prop:rowMapAdjSchurExp},
$\rowAdj{k}(\schur_\lambda) = \schur_{a_\lambda}$
where $a_\lambda = (k\lambda + (k-1)\rho)/(k-1)\rho$.
Since $\langle \elementaryE_\mu, \schur_\gamma \rangle
= \langle \completeH_\mu, \omega(\schur_\gamma) \rangle
= \langle \completeH_\mu, \schur_{\gamma'} \rangle
= K_{\gamma',\mu}$ for any (skew) shape $\gamma$, we obtain
\[
[\schur_\lambda]\, \rowMap_k(\elementaryE_\mu)
= K_{a_\lambda',\,\mu},
\]
which gives \eqref{eq:emuInSchur}.
For $\mu = 1^{kn}$, we have $K_{a_\lambda', 1^{kn}} = f^{a_\lambda'}$,
and since the number of standard Young tableaux is invariant under conjugation,
$f^{a_\lambda'} = f^{a_\lambda}$, giving \eqref{eq:e1knInSchur}.
\end{proof}

\begin{example}
Let $k=2$ and $\mu = (2,1,1)$, so $n=2$ and $kn = 4$.
The partitions of $2$ are $\lambda = (2)$ and $\lambda = (1,1)$.

For $\lambda = (2)$, we have $\ell=1$ and $\rho = (0)$, so
$a_{(2)} = (4)$ and $a_{(2)}' = (1^4)$.
Since $K_{(1^4),(2,1,1)} = 0$ (a column shape requires strictly increasing entries,
incompatible with the repeated $1$ in content $(2,1,1)$), this term vanishes.

For $\lambda = (1,1)$, we have
$a_{(1,1)} = (3,2)/(1)$ and $a_{(1,1)}' = (2,2,1)/(1)$.
The two semi-standard Young tableaux of shape $(2,2,1)/(1)$ and content $(2,1,1)$ are
\[
\ytableausetup{boxsize=1.2em}
\ytableaushort{{\none}1,12,3}
\qquad \text{and} \qquad
\ytableaushort{{\none}1,13,2}
\]
\noindent
(where cell $(1,1)$ is removed by the inner shape).
Hence $\rowMap_2(\elementaryE_{2,1,1}) = 2\,\schur_{(1,1)}$.
\end{example}

\begin{remark}
For the special case $\mu = 1^{kn}$, the monomial expansion gives
\[
\rowMap_k(\elementaryE_{1^{kn}}) = \sum_{\nu \vdash n} |\BCM(1^{kn}, k\nu)| \, \monomial_\nu,
\]
so the coefficient of $\monomial_\nu$ in \eqref{eq:e1knInSchur}
counts pairs $(S,T)$ in
$\bigsqcup_{\lambda \vdash n} \SYT(a_\lambda) \times \SSYT(\lambda, \nu)$.
A direct bijection between $\BCM(1^{kn}, k\nu)$ and such pairs---ideally via
RSK or one of its variants---would give a combinatorial proof of
Theorem~\ref{thm:emuSchurExpansion}, but
the construction appears to require a non-standard
insertion procedure, since the skew shapes $a_\lambda$ do not arise
as RSK output shapes for generic words of content $k\nu$.
Similarly, a proof via crystal operators remains open:
the map $\rowMap_k$ does not commute with the
standard crystal operators on words, so Schur positivity
of $\rowMap_k(\elementaryE_\mu)$ does not follow directly
from a crystal-theoretic argument.
\end{remark}

\begin{definition}
Let $k \geq 1$ and $\mu \vdash nk$, and consider 
\[
 \rowMap_k(\elementaryE_\mu(\xvec)) = \sum_{\nu \vdash n} c_{\mu,\nu} \elementaryE_\nu(\xvec).
\]
The \defin{$(k,\mu)$-Euler number $E_{k,\mu}$} is defined as the sum of the coefficients:
\[
E_{k,\mu}\coloneqq \sum_{\nu \vdash n} c_{\mu,\nu}.
\]
\end{definition}

\begin{proposition}
Let $k\geq1$ and let $\mu \vdash nk$. 
Then the following sets have cardinality given by $E_{k,\mu}$:
\begin{enumerate}
\item[(a)] 
$\SSYT( (k^n + (k-1)\rho / (k-1)\rho)',\mu)$
\item[(b)] 
$k$-Yamanouchi skew tableaux with disjoint columns of sizes $\mu_1,\mu_2,\dotsc$
and content $1^{k}\,2^{k} \, \dotsc \, n^{k}$
\item[(c)] 
Multiset permutations of $1^{\mu_1}\, 2^{\mu_2}\, \dotsc \, \ell^{\mu_\ell}$
where all multiples of $k$ are weak ascents.
\end{enumerate}
\end{proposition}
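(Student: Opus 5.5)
By Lemma~\ref{lem:sumOfECoeffsIsSchurCoeff}, $E_{k,\mu}$ equals the coefficient of $\schur_{1^n}$ in $\rowMap_k(\elementaryE_\mu)$. So the plan is to compute the single number $[\schur_{1^n}]\,\rowMap_k(\elementaryE_\mu)$ in three different ways, one for each of (a), (b), (c).

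For (a), I apply Theorem~\ref{thm:emuSchurExpansion} with $\lambda=1^n$. Here $\ell(\lambda)=n$, so $\rho$ has $n$ parts and $k\lambda=k^n$. Hence $a_{1^n}=(k^n+(k-1)\rho)/(k-1)\rho$, and the coefficient is $K_{a_{1^n}',\mu}=|\SSYT((k^n+(k-1)\rho/(k-1)\rho)',\mu)|$. The only thing to check is that the indexing of $\rho$ in the statement matches the convention of Proposition~\ref{prop:rowMapAdjSchurExp}. A shift of $\rho$ by a constant vector does not change the skew shape, so the two conventions in the paper give the same set.

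For (b), I write $\elementaryE_\mu=\schur_D$, where $D$ is a skew shape consisting of disjoint columns of sizes $\mu_1,\mu_2,\dots$. Each column contributes a factor $\elementaryE_{\mu_i}$, and the skew Schur function of a disconnected shape is the product of those of its components. I then apply Theorem~\ref{thm:schurExpansion} to the skew shape $D$ with $\nu=1^n$, which gives $[\schur_{1^n}]\,\rowMap_k(\schur_D)=|\YSSYT_k(D,k\cdot 1^n)|$. These are exactly the $k$-Yamanouchi tableaux with content $1^k2^k\cdots n^k$. One subtlety is that $k$-Yamanouchi depends on the reading order, hence on how the columns of $D$ are placed. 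Theorem~\ref{thm:schurExpansion} holds for every skew shape, so any fixed arrangement of the columns gives the same count, and I will state this explicitly.

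For (c), the key observation is that $a_{1^n}$ is a ribbon. Its rows all have length $k$, and row $p$ is shifted by $(k-1)(n-p)$, so consecutive rows share exactly one column and the shape is connected with no $2\times 2$ square. Consequently $a_{1^n}'$ is also a ribbon, with $n$ columns of length $k$ that overlap consecutively in one row. I then use the standard fact that the number of SSYT of a ribbon shape with content $\mu$ equals the number of words with content $\mu$ whose descent set is the set determined by the ribbon. To see this, read the ribbon cells along the ribbon. Within a column the entries strictly increase, and when passing from one column to the next the entries weakly decrease (or increase, depending on the reading direction). Choosing the direction so that columns read as strictly descending runs, the junction positions are exactly the multiples of $k$, and these must be weak ascents. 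This identifies (a) with the multiset permutations of $1^{\mu_1}\cdots\ell^{\mu_\ell}$ described in (c).

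I expect the main obstacle to be this last step: fixing the reading direction and the precise inequality (strict versus weak, ascent versus descent) at the junction cells so that it matches the statement, including whether non-multiples of $k$ are forced to be strict descents. I would verify this first on the small case $k=2$, $\mu=(2,1,1)$. From the example after Theorem~\ref{thm:emuSchurExpansion} the answer there is $2$, and listing the words of content $(2,1,1)$ will confirm the convention.
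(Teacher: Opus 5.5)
Your proposal is correct and follows the paper's own argument. For (a) and (b) you extract $[\schur_{1^n}]$ using Lemma~\ref{lem:sumOfECoeffsIsSchurCoeff} together with \eqref{eq:emuInSchur} and Theorem~\ref{thm:schurExpansion}, and for (c) you read each column of the ribbon $a_{1^n}'$ from bottom to top, which is exactly the paper's bijection. That reading makes every $k$-block strictly decreasing and every position $jk$ a weak ascent, so (c) does have to be read with strict descents at all non-multiples of $k$, as you suspected and as your check $E_{2,(2,1,1)}=2$ confirms.
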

\begin{proof}
The first and second items follow from Theorem~\ref{thm:schurExpansion} and 
\eqref{eq:emuInSchur}  combined with Lemma~\ref{lem:sumOfECoeffsIsSchurCoeff}.
The third statement follows from an explicit bijection between (a) and (c).

Given $T\in \SSYT((k^n + (k-1)\rho / (k-1)\rho)',\mu)$, 
we read $T$ column by column, from bottom to top to obtain the word
\[
w(T)=w_1w_2\cdots w_{nk}.
\]
Since each column is increasing, it follows that for every $1\le j\le n-1$, the subword $w_{(j-1)k+1},\dots,w_{jk}$ is descending. 
Moreover, since the last cell of column $j$ is immediately 
to the left of the first cell of column
$j + 1$ in $T$, semistandardness of $T$ implies that
$w_{jk}\le w_{jk+1}$. Thus $w(T)$ is a permutation 
of $1^{\mu_1}2^{\mu_2}\cdots \ell^{\mu_\ell}$ in which all multiples 
of $k$ are weak ascents.
\medskip 

Conversely, given such a word $w$, split it into $n$ consecutive blocks of length $k$, and place the $j$th block into column $j$ of the
shape $(k^n+(k-1)\rho/(k-1)\rho)^\prime$, filling each column from bottom to top. The condition
$w_{jk}\le w_{jk+1}$ guarantees that each row is weakly increasing.
\end{proof}
It is interesting to note that $E_{k,\mu}$ is invariant under 
permutation of the entries in $\mu$. 

\begin{corollary}
For $\mu=1^{kn}$ and $k=2$, we have that $E_{k,\mu}$ counts the number of up-down permutations. These are the Euler zig-zag numbers, see 
\oeis{A000111}.    
\end{corollary}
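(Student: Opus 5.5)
We will derive the corollary from part (c) of the preceding proposition. With $\mu = 1^{2n}$ and $k=2$, the multiset $1^{\mu_1}2^{\mu_2}\cdots$ is just $\{1,2,\dotsc,2n\}$ with each letter appearing once. So $E_{2,1^{2n}}$ counts permutations $w = w_1 w_2 \cdots w_{2n}$ of $[2n]$ such that every even position $j$ is an ascent, i.e.\ $w_{j} < w_{j+1}$ for $j = 2,4,\dotsc,2n-2$. Weak ascents are strict here since the letters are distinct.

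This is not yet an alternating condition, because only the even positions are constrained. We must therefore look back at the proof of part (c) to recover the second half of the constraint. In that bijection each block $w_{2j-1}w_{2j}$ is read from a column of height $2$ from bottom to top. Since columns are strictly increasing, each block is strictly descending: $w_{2j-1} > w_{2j}$. The words produced by the bijection therefore satisfy
\[
w_1 > w_2 < w_3 > w_4 < \cdots > w_{2n},
\]
so they are exactly the down-up permutations of $[2n]$. The statement of (c) says ``all multiples of $k$ are weak ascents'', but the descent-within-blocks condition is implicit in its proof. The main point requiring care is to make this explicit: the bijection with (a) lands on words whose blocks of length $k$ are strictly decreasing, and whose block boundaries are weak ascents. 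We verify both directions. A down-up permutation, cut into blocks of length $2$ and placed into the columns of the shape $(2^n + \rho/\rho)'$, gives a valid semistandard filling. Strictness in the columns comes from the descents within blocks, and the row condition comes from the ascents at block boundaries. This matches the converse construction in the proof.

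Finally, complementation $w_i \mapsto 2n+1-w_i$ maps down-up permutations bijectively onto up-down permutations of $[2n]$. Their number is the Euler zig-zag number $E_{2n}$, entry \oeis{A000111}. Hence $E_{2,1^{2n}}$ equals the number of up-down permutations of $[2n]$. As a sanity check we can compute small cases. For $n=1$ we get $\rowMap_2(\elementaryE_{11}) = \elementaryE_1$, so $E=1$. For $n=2$, by Theorem~\ref{thm:emuSchurExpansion} the coefficient of $\schur_{11}$ is $f^{(3,2)/(1)} = 5$, which matches the $5$ alternating permutations of $[4]$ via Lemma~\ref{lem:sumOfECoeffsIsSchurCoeff}.
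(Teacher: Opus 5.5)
Your proof is correct and follows the paper's route: the corollary comes from part (c) of the preceding proposition, whose bijection with $\SSYT\bigl(((2^n+\rho)/\rho)',1^{2n}\bigr)$ yields exactly the down-up permutations $w_1>w_2<w_3>\cdots>w_{2n}$, and complementation turns these into up-down permutations, counted by $E_{2n}$. You were right to make the within-block descents explicit: the printed statement of (c) omits them (read literally it would give $12$ rather than $5$ for $n=2$), although the paper's proof of (c) does record them.
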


\subsection{Connection to work of Amdeberhan, Shareshian, and Stanley}

We now establish a connection between our construction and 
work of Tewodros~Amdeberhan, John~Shareshian, and Richard~Stanley on alternating permutations and Euler numbers.
Their work is unpublished but a video recording and slides are available, see \cite{Stanley2025ICECA}.

\begin{lemma}\label{lem:omegaOfGenFunc}
Let $f$ be any formal power series with $f(0)=1$. Then
\begin{equation}\label{eq:FPS}
\omega\Bigl(\prod_i f(x_i)\Bigr)=\prod_i f(-x_i)^{-1}.
\end{equation}
\end{lemma}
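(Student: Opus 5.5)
The plan is to use the classical generating functions for $\completeH$ and $\elementaryE$, namely
\[
H(t)=\sum_{r\ge 0}\completeH_r t^r=\prod_i (1-x_it)^{-1},\qquad E(t)=\sum_{r\ge0}\elementaryE_r t^r=\prod_i(1+x_it),
\]
together with $\omega(\completeH_r)=\elementaryE_r$ and $H(t)E(-t)=1$. Since $f(0)=1$, we can write $\log f(x)=\sum_{m\ge1} b_m x^m$ as a formal power series. Then
\[
\prod_i f(x_i)=\exp\Bigl(\sum_{m\ge1} b_m \powersum_m\Bigr),
\]
which is a well-defined element of the completion of $\Lambda$, and $\omega$ acts degreewise on it.

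The key step is to compute how $\omega$ acts on power sums: $\omega(\powersum_m)=(-1)^{m-1}\powersum_m$. Since $\omega$ is a ring homomorphism that is continuous in the graded topology, it commutes with $\exp$, so
\[
\omega\Bigl(\prod_i f(x_i)\Bigr)=\exp\Bigl(\sum_m b_m(-1)^{m-1}\powersum_m\Bigr)=\exp\Bigl(-\sum_m b_m\sum_i(-x_i)^m\Bigr)=\prod_i f(-x_i)^{-1}.
\]
As a sanity check, take $f(x)=(1-x)^{-1}$: the left side is $\omega(H(1))=E(1)$, and the right side is $\prod_i(1+x_i)$, which agrees.

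For the action on power sums I will simply cite the standard fact $\omega(\powersum_\lambda)=\varepsilon_\lambda\powersum_\lambda$ (see \cite{StanleyEC2}). If a self-contained argument is wanted, it follows from Newton's identities, i.e. from taking the logarithmic derivative of $H(t)E(-t)=1$.

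The only delicate point is justifying that $\omega$ extends to the completion $\hat\Lambda$ and commutes with $\exp$ and infinite products. This holds because $\omega$ preserves degree, so every identity can be checked in each fixed degree, where only finitely many terms contribute. I expect this bookkeeping to be the main (mild) obstacle; everything else is a direct computation.
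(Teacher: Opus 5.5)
Your proof is correct and is essentially the paper's argument: write $\log f(z)=\sum_{r\ge1}a_r z^r$, so that $\prod_i f(x_i)=\exp\bigl(\sum_r a_r\powersum_r\bigr)$, apply $\omega(\powersum_r)=(-1)^{r-1}\powersum_r$, and re-exponentiate. Your remark that $\omega$ extends degreewise to the completion, and so commutes with $\exp$ and infinite products, makes explicit a point the paper leaves implicit; the $\completeH/\elementaryE$ generating functions you mention at the start are not needed for the argument.
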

\begin{proof}
First, write $\log f(z) = \sum_{r\ge1} a_r z^r$.
Thus, 
\[
\prod_i f(x_i)=\exp(\sum_{r\ge1} a_r \powersum_r(\xvec)),
\]
and since $\omega(\powersum_r)=(-1)^{r-1}\powersum_r$, 
one has 
\[
\omega\Bigl(\prod_i f(x_i) \Bigr) = \exp(\sum_{r\ge1} a_r (-1)^{r-1}\powersum_r(\xvec))=\prod_i f(-x_i)^{-1}.
\]
\end{proof}

\begin{definition}[{See~\cite{Stanley2025ICECA}}]
We define $\defin{A_{n,k}(\xvec)}$ via the relation
\[
\sum_{n\ge 0}\frac{A_{n,k}(\xvec)t^n}{(nk)!}=\prod_{i\ge 1}\Biggl(\sum_{m\ge 0}\frac{(-1)^m x_i^{m}t^{m}}{(km)!}\Biggr)^{-1}.
\]
\end{definition}

\begin{definition}[see~\cite{AMDEBERHAN_ONO_SINGH_2025}]
For $\lambda = \langle 1^{m_1}, \dots, n^{m_n} \rangle \vdash n$, define
\[
\defin{\phi(\lambda)} \coloneqq(2n)! \cdot \prod_{k=1}^n \frac{1}{m_k!} \left( \frac{4^k(4^k - 1) B_{2k}}{(2k)(2k)!} \right)^{m_k},
\]
where $B_{2k}$ is the $2k$th Bernoulli number. Then 
\begin{equation}\label{eq:ass-powersum}
 A_{n,k}(\xvec) = \sum_{\lambda} |\phi(\lambda)| \powersum_\lambda(x)
\end{equation}
where $\sum_k |\phi(\lambda)|$ is the number of alternating permutations $w\in \symS_{nk}$ 
whose ascent set is exactly $\{k,2k,\dotsc,(n-1)k\}$, see \cite{Stanley2025ICECA}.
\end{definition}

\begin{theorem}[Amdeberhan–Shareshian–Stanley, see~\cite{Stanley2025ICECA}]\label{thm:Stanleyhpositive}
    $A_{n,k}(\xvec)$ is $\completeH$-positive.
\end{theorem}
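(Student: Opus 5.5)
The plan is to identify $A_{n,k}(\xvec)$ with $\omega\bigl(\rowMap_k(\elementaryE_{1^{kn}})\bigr)$, up to the normalisation $(kn)!$. Once that identification is in place, $\completeH$-positivity of $A_{n,k}$ becomes $\elementaryE$-positivity of $\rowMap_k(\elementaryE_{1^{kn}})$.

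\emph{Step 1: a generating function for the images.} Since $\elementaryE_{1^{kn}}=\elementaryE_1^{kn}=\sum_{\alpha}\binom{kn}{\alpha}\xvec^\alpha$, and $\rowMap_k$ extended to power series keeps exactly the monomials whose exponents are all divisible by $k$ and divides those exponents by $k$, we get
\[
\frac{\rowMap_k(\elementaryE_1^{kn})}{(kn)!}=\sum_{\beta\vdash n\text{ as weak compositions}}\prod_i\frac{x_i^{\beta_i}}{(k\beta_i)!}.
\]
Summing over $n$ with weight $t^n$ gives
\[
\sum_{n\ge0}\frac{\rowMap_k(\elementaryE_{1^{kn}})\,t^n}{(kn)!}=\prod_i\sum_{m\ge0}\frac{x_i^mt^m}{(km)!}.
\]

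\emph{Step 2: apply $\omega$.} Lemma~\ref{lem:omegaOfGenFunc}, applied with $f(z)=\sum_m z^mt^m/(km)!$ (so that $f(0)=1$, with $t$ treated as a grading parameter), gives
\[
\omega\Bigl(\prod_i f(x_i)\Bigr)=\prod_i\Bigl(\sum_m\frac{(-1)^mx_i^mt^m}{(km)!}\Bigr)^{-1}.
\]
Comparing coefficients of $t^n$ yields $A_{n,k}(\xvec)=\omega\bigl(\rowMap_k(\elementaryE_{1^{kn}})\bigr)$. One point to check here: $\omega$ is degree-preserving, and $t$ records degree, so applying $\omega$ degree-by-degree is legitimate.

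\emph{Step 3: show $\elementaryE$-positivity.} It remains to show that $\rowMap_k(\elementaryE_{1^{kn}})$ is $\elementaryE$-positive, and I expect this to be the main obstacle. The Schur expansion in Theorem~\ref{thm:emuSchurExpansion} gives only Schur positivity. The triangularity coming from Lemma~\ref{lem:e-in-m} gives only the minimal coefficient. Lemma~\ref{lem:sumOfECoeffsIsSchurCoeff} gives only the sum of the coefficients.

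My first attempt is to write $\elementaryE_1^{kn}$ through its fundamental expansion $\sum_{w\in\symS_{kn}}\gessel_{\DES(w^{-1})}$ (compare Theorem~\ref{thm:F-expansion}). Applying $\rowMap_k$ keeps only the permutations whose descent sets lie in $k\setN$. After applying $\omega$, we want to group these into $\completeH_\nu$'s via $\completeH_\nu=\sum_{\text{compositions rearranging }\nu}\qmonomial$.

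Equivalently, I would work directly with the reciprocal product: write $\prod_i g(-x_i)^{-1}$ with $g(z)=\sum_m (-1)^m z^m/(km)!$, expand $1/g(-x)$ as a geometric series in $1-g(-x)$, and recognise each term as $\sum_{i_1\le\cdots}$, that is, as a quasisymmetric $\completeH$-type sum. The aim is to express the coefficient of $\completeH_\nu$ as the number of alternating-type permutations whose ascent set is $\{k,2k,\dotsc\}$, with block structure refined by $\nu$. Concretely, I would take the words from part (c) of the preceding proposition and decompose them into maximal runs. I would then build a sign-reversing involution that cancels the negative terms arising from the inversion of $g$, and check that the surviving fixed points are counted with the correct sign.

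If this direct approach fails, the fallback is to invoke Theorem~\ref{thm:Stanleyhpositive} as proved by Amdeberhan--Shareshian--Stanley. In that case the new content is the identification in Steps 1--2, and the positivity transfers through $\omega$.
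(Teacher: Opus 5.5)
Your Steps~1--2 are correct (the identity holds exactly, with no extra normalisation) and match the paper's proof of \eqref{eq:sameAsStanley}. But that identity only restates the claim. $\completeH$-positivity of $A_{n,k}$ is equivalent to $\elementaryE$-positivity of $\rowMap_k(\elementaryE_{1^{kn}})$, which is the case $\mu=1^{kn}$ of Conjecture~\ref{conje-positive}. The paper does not prove this statement; it imports it from Amdeberhan--Shareshian--Stanley and uses it as an input. So all the content lies in your Step~3, and there nothing is proved: no involution is defined and no fixed-point set is identified. The sketch also has concrete errors. First, $\sum_{\mathrm{sort}(\alpha)=\nu}\qmonomial_\alpha$ equals $\monomial_\nu$, not $\completeH_\nu$, so that grouping cannot produce an $\completeH$-expansion. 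Second, with your $g$ one has $\prod_i g(-x_i)^{-1}=\prod_i \mittagLefflerE_k(x_i)^{-1}$, which is not the generating function of the $A_{n,k}$. Falling back on Theorem~\ref{thm:Stanleyhpositive} is circular.

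The missing idea is a product factorization, not an involution. The case $k=1$ is trivial, since $A_{n,1}=\completeH_1^n$. For $k\ge 2$, every zero of $\mittagLefflerE_k(z)=\sum_m z^m/(km)!$ is real and negative, by a classical theorem of Wiman on Mittag-Leffler functions. Alternatively, as $N\to\infty$, $\mittagLefflerE_k$ is the locally uniform limit of $\sum_m\binom{kN}{km}(kN)^{-km}z^m$. These are rescalings of the real-rooted Veronese sections $P_{N,k}$ used in the paper, so $\mittagLefflerE_k$ lies in the Laguerre--P\'olya class. Since $\mittagLefflerE_k$ has order $1/k<1$, we get $\mittagLefflerE_k(z)=\prod_{j\ge1}(1+y_jz)$ with all $y_j>0$. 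For $k=2$ this is $\cosh\sqrt z=\prod_j\bigl(1+\frac{4z}{(2j-1)^2\pi^2}\bigr)$. The Cauchy identity then gives
\[
\sum_{n\ge 0}\frac{A_{n,k}(\xvec)\,t^n}{(kn)!}
=\prod_i \mittagLefflerE_k(-x_it)^{-1}
=\prod_{i,j}\frac{1}{1-x_iy_jt}
=\sum_\lambda \completeH_\lambda(\xvec)\,\monomial_\lambda(\yvec)\,t^{|\lambda|},
\]
so $A_{n,k}=(kn)!\sum_{\lambda\vdash n}\monomial_\lambda(\yvec)\,\completeH_\lambda$ has nonnegative coefficients. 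They are integers because $A_{n,k}=\omega(\rowMap_k(\elementaryE_{1^{kn}}))$ has integer monomial coefficients and $\{\completeH_\lambda\}$ is a $\setZ$-basis. For $k=2$ one finds $\powersum_r(\yvec)/r=4^r(4^r-1)|B_{2r}|/(2r\,(2r)!)$, which is where the Bernoulli numbers in $\phi(\lambda)$ come from.
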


\begin{theorem}
For $k\geq 1$, we have that
\begin{equation}\label{eq:sameAsStanley}
   \omega(\rowMap_k(\elementaryE_{1^{nk}} (\xvec))) =A_{n,k}(\xvec).
\end{equation}
\end{theorem}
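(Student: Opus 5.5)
We would prove \eqref{eq:sameAsStanley} at the level of generating functions, summing over all $n$ at once. Since $\rowMap_k$ is defined on monomials, the natural move is to write $\elementaryE_{1^{N}} = \elementaryE_1^{N}$ with $N = nk$ and expand in monomials:
\[
\elementaryE_1^{N} = \sum_{\alpha} \binom{N}{\alpha_1,\alpha_2,\dotsc} \xvec^\alpha .
\]
Here $\alpha$ ranges over weak compositions. The map $\rowMap_k$, extended to polynomials as remarked after Proposition~\ref{prop:adjMap}, keeps exactly the monomials with every exponent divisible by $k$ and divides those exponents by $k$. This gives
\[
\rowMap_k(\elementaryE_{1^{nk}}) = \sum_{\beta} \frac{(nk)!}{\prod_i (k\beta_i)!}\, \xvec^{\beta},
\]
summing over weak compositions $\beta$ of $n$. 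This agrees with the coefficient $|\BCM(1^{kn},k\nu)|$ of $\monomial_\nu$ recorded in the remark after Theorem~\ref{thm:emuSchurExpansion}.

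Dividing by $(nk)!$, multiplying by $t^n$ and summing over $n\ge 0$, the sum factors over the variables:
\[
\sum_{n\ge0}\frac{\rowMap_k(\elementaryE_{1^{nk}})\,t^n}{(nk)!} = \prod_{i\ge1}\Bigl(\sum_{m\ge0}\frac{x_i^m t^m}{(km)!}\Bigr) = \prod_i f(x_i t).
\]
Here $f(z)=\sum_{m\ge 0} z^m/(km)!$ is a power series with $f(0)=1$. The factorization is legitimate because each homogeneous component of the product involves only finitely many monomials per degree.

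Next we apply $\omega$ coefficientwise in $t$. It is a ring automorphism of $\Lambda$, so it extends to $\Lambda[[t]]$ with $t$ treated as a scalar. Lemma~\ref{lem:omegaOfGenFunc} applied to $f$ then gives
\[
\omega\Bigl(\prod_i f(x_i t)\Bigr) = \prod_i f(-x_i t)^{-1} = \prod_i\Bigl(\sum_{m\ge0}\frac{(-1)^m x_i^m t^m}{(km)!}\Bigr)^{-1}.
\]
To justify the substitution $x_i \mapsto x_i t$, note that $\log f(x_it) = \sum_r a_r t^r x_i^r$, so $\prod_i f(x_it) = \exp\bigl(\sum_r a_r t^r \powersum_r\bigr)$. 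Since $\omega(\powersum_r) = (-1)^{r-1}\powersum_r$, the proof of the lemma goes through verbatim with $a_r$ replaced by $a_r t^r$.

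The right-hand side is precisely the defining generating function of $A_{n,k}(\xvec)$. Comparing the coefficients of $t^n/(nk)!$ yields \eqref{eq:sameAsStanley}.

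The only delicate point is the $t$-graded use of Lemma~\ref{lem:omegaOfGenFunc}. The bookkeeping is straightforward: $\omega$ preserves degree, and the degree in $\xvec$ equals the power of $t$. Beyond that, one needs to confirm that extending $\rowMap_k$ from $\monomial_\mu$ to individual monomials is consistent, which holds since $\monomial_\mu$ is a sum of such monomials. I expect no real obstacle beyond this formal verification.
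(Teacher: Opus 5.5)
Your proposal is correct and is essentially the paper's proof: the multinomial expansion of $\elementaryE_1^{nk}$, the factorization $\sum_{n\ge 0}\rowMap_k(\elementaryE_1^{nk})\,t^n/(nk)! = \prod_i \mittagLefflerE_k(x_i t)$, and Lemma~\ref{lem:omegaOfGenFunc} applied with $f(z)=\mittagLefflerE_k(zt)$ are exactly the steps used there. One wording fix: the infinite product is well defined because each fixed monomial $\xvec^\beta t^n$ receives contributions from only finitely many factors, not because there are finitely many monomials of each degree (in infinitely many variables there are infinitely many).
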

\begin{proof}
Note that  $\elementaryE_1(\xvec)=x_1+x_2+\dotsb$, so we have
\[
\elementaryE_1^{nk}(\xvec)
=\sum_{\alpha_1+\alpha_2+\dotsb=nk}\frac{(nk)!}{\alpha_1!\alpha_2!\dotsm} x_1^{\alpha_1}x_2^{\alpha_2}\dotsm.
\]
Applying $\rowMap_k$ retains precisely those terms for which each $\alpha_i$ is divisible by $k$, say $\alpha_i=k\beta_i$, and then divides the exponents by $k$. Hence
\[
\rowMap_k(\elementaryE_1^{nk}(\xvec))
=\sum_{\beta_1+\beta_2+\cdots=n}\frac{(nk)!}{(k\beta_1)!(k\beta_2)!\cdots} x_1^{\beta_1}x_2^{\beta_2}\cdots.
\]
We consider
\[
\sum_{n\ge 0}\frac{\rowMap_k(\elementaryE_1^{nk}(\xvec))t^n}{(nk)!}
=\sum_{\beta_1,\beta_2,\ldots\ge 0}\prod_{i\ge 1}\frac{(x_it)^{\beta_i}}{(k\beta_i)!}
=\prod_{i\ge 1}\Biggl(\sum_{m\ge 0}\frac{(x_it)^m}{(km)!}\Biggr).
\]
Let $\defin{\mittagLefflerE_k(z)} \coloneqq \sum_{m\ge 0} \frac{z^m}{(mk)!}$ be the \defin{Mittag-Leffler function}. Then
\[
\sum_{n\ge 0}\frac{\rowMap_k(\elementaryE_1^{nk}(\xvec))t^n}{(nk)!}=\prod_{i\ge 1}\mittagLefflerE_k(x_it).
\]
We then apply $\omega$ and define
\begin{equation}\label{eq:genfun_row(e_1^{nk})}
\defin{F_k(x,t)}  \coloneqq \omega\Bigl(\prod_{i\ge 1}\mittagLefflerE_k(x_it)\Bigr).
\end{equation}
By using Lemma~\ref{lem:omegaOfGenFunc},
with $f(z)=\mittagLefflerE_k(zt)$ in \eqref{eq:genfun_row(e_1^{nk})}, we obtain
\[
F_k(x,t)
=\omega\Bigl(\prod_i \mittagLefflerE_k(x_i t)\Bigr)
=\prod_i \mittagLefflerE_k(-x_i t)^{-1}
=\prod_i\Bigl(\sum_{m\ge 0}\frac{(-1)^m x_i^m t^m}{(km)!}\Bigr)^{-1}.
\]
This now gives \eqref{eq:sameAsStanley}.
\end{proof}

\subsection{Power sum expansions }\label{sec:powerSumExpansion}

It is natural to ask whether \eqref{eq:ass-powersum} generalizes to general $\mu\vdash nk$.
We give a combinatorial interpretation of the coefficients in the expansion of
$\omega(\rowMap_k(\elementaryE_{\mu}))$.

\begin{lemma}\label{lem:omega-rowmap-gf}
For $k\geq 1, \mu\vdash nk$, let
\[
\defin{\mathcal F_k(\xvec,\yvec)} \coloneqq \sum_\mu \omega\bigl(\rowMap_k(\elementaryE_\mu(\xvec))\bigr)\monomial_\mu(\yvec).
\]
Then 
\[
\mathcal F_k(\xvec,\yvec) = \prod_i   B_k(x_i;\yvec)^{-1} 
= 
\exp\left(\sum_{r\ge 1}\Phi_{k,r}(\yvec)\frac{\powersum_r(\xvec)}{r}\right),
\]
where $\defin{B_k(t;\yvec)}\coloneqq \sum_{m\ge 0}(-1)^m \elementaryE_{km}(\yvec)t^m$
and $\defin{\Phi_{k,r}(\yvec)}\coloneqq -r[t^r]\log B_k(t;\yvec)$.
\end{lemma}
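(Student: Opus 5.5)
The plan is to compute the generating function $\sum_\mu \rowMap_k(\elementaryE_\mu(\xvec))\monomial_\mu(\yvec)$ first, and only afterwards apply $\omega$ in the $\xvec$ variables via Lemma~\ref{lem:omegaOfGenFunc}. This mirrors the proof of \eqref{eq:sameAsStanley}, with $\elementaryE_1^{nk}$ replaced by the whole family $\elementaryE_\mu$ at once.

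\textbf{Step 1: the Cauchy identity.} Start from the dual Cauchy identity
\[
\sum_\mu \elementaryE_\mu(\xvec)\monomial_\mu(\yvec)=\prod_{i,j}(1+x_iy_j)=\prod_i\Bigl(\sum_{a\ge 0}\elementaryE_a(\yvec)x_i^a\Bigr).
\]
This follows from Lemma~\ref{lem:e-in-m}, or simply by expanding the product.

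\textbf{Step 2: applying $\rowMap_k$.} Apply $\rowMap_k$ in the $\xvec$ variables, coefficientwise in $\yvec$. Since $\rowMap_k$ is linear and acts on monomials $\xvec^\alpha$, extended to polynomials as remarked after Proposition~\ref{prop:adjMap}, it keeps exactly those monomials whose exponents are all divisible by $k$ and divides those exponents by $k$. The right-hand side is a product over $i$ of series in the single variable $x_i$, and an exponent vector is divisible by $k$ exactly when each factor contributes an exponent $a=km$. Hence
\[
\sum_\mu \rowMap_k(\elementaryE_\mu(\xvec))\monomial_\mu(\yvec)=\prod_i\Bigl(\sum_{m\ge0}\elementaryE_{km}(\yvec)x_i^m\Bigr).
\]
Only $\mu\vdash kn$ contribute nonzero terms, which matches the statement.

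\textbf{Step 3: applying $\omega$.} Apply $\omega$ in $\xvec$ using Lemma~\ref{lem:omegaOfGenFunc}, with $f(z)=\sum_m \elementaryE_{km}(\yvec)z^m$, treating the $\yvec$-coefficients as scalars. The lemma extends verbatim to this coefficient ring, since its proof only uses $\log f$ and $\omega(\powersum_r)=(-1)^{r-1}\powersum_r$. This gives
\[
\mathcal F_k=\prod_i f(-x_i)^{-1}=\prod_i B_k(x_i;\yvec)^{-1}.
\]
For the exponential form, write $\log B_k(t;\yvec)=\sum_r c_r t^r$, so that $c_r=-\Phi_{k,r}/r$. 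Then
\[
\prod_i B_k(x_i)^{-1}=\exp\Bigl(-\sum_r c_r\powersum_r(\xvec)\Bigr)=\exp\Bigl(\sum_r \Phi_{k,r}\,\powersum_r/r\Bigr).
\]

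\textbf{Main obstacle.} The only subtle point is justifying Steps 2 and 3 rigorously. In Step 2 one must check that $\rowMap_k$, defined on monomial symmetric functions, agrees with the exponent-divisibility rule on monomials and commutes with the infinite product. This holds because the rule is multiplicative over disjoint variable sets. In Step 3 one must check that $\omega$ acting on the $\xvec$-part is compatible with $\yvec$-coefficients; this is fine by working degree by degree in the $\xvec$ variables and using linearity.
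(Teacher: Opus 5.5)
Your proposal is correct and follows essentially the same route as the paper: apply the dual Cauchy identity, then apply $\rowMap_k$ in the $\xvec$-variables to obtain $\prod_i \sum_{m\ge 0}\elementaryE_{km}(\yvec)x_i^m$, then use Lemma~\ref{lem:omegaOfGenFunc}, and finally take logarithms to recognize power sums. Your added remarks on why $\rowMap_k$ acts monomialwise and why $\omega$ is compatible with the $\yvec$-coefficients fill in justifications that the paper leaves implicit.
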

\begin{proof}
Introduce
\[
\defin{A_k(t;\yvec)} \coloneqq \sum_{m\ge 0} \elementaryE_{km}(\yvec)t^m
\qquad\text{and}\qquad
\defin{B_k(t;\yvec)} \coloneqq A_k(-t;\yvec)=\sum_{m\ge 0}(-1)^m \elementaryE_{km}(\yvec)t^m.
\]

The second Cauchy identity (see \cite{StanleyEC2}) states that
\[
\sum_\mu \elementaryE_\mu(\xvec)\monomial_\mu(\yvec)
=
\prod_{i,j}(1+x_i y_j) =
\prod_i\left(\sum_{r\ge 0} \elementaryE_r(\yvec)x_i^r\right).
\]
We now apply $\rowMap_k$ on the $\xvec$-variables and obtain
\[
\sum_\mu \rowMap_k(\elementaryE_\mu(\xvec))\monomial_\mu(\yvec)
=\prod_i\left( \sum_{m\ge 0} \elementaryE_{km}(\yvec)x_i^m  \right)
=
\prod_i A_k(x_i;\yvec).
\]
Applying $\omega$ on the $\xvec$-alphabet
together with Lemma~\ref{lem:omegaOfGenFunc} gives that 
\begin{equation}\label{eq:pexpStep}
\mathcal F_k(\xvec,\yvec)
=
\omega\left(\prod_i A_k(x_i;\yvec)\right)=
\prod_i B_k(x_i;\yvec)^{-1}.
\end{equation}
Note that $\Phi_{k,r}(\yvec)$ satisfies
\[
-\log B_k(t;\yvec)=\sum_{r\ge 1}\Phi_{k,r}(\yvec)\frac{t^r}{r}.
\]
Therefore, by taking logarithms of each factor in \eqref{eq:pexpStep} and recognizing power sums, we get
\[
\mathcal F_k(\xvec,\yvec)=\prod_i B_k(x_i;\yvec)^{-1}
=
\exp\left(\sum_i\sum_{r\ge 1}\Phi_{k,r}(\yvec)\frac{x_i^r}{r}\right)
=
\exp\left(\sum_{r\ge 1}\Phi_{k,r}(\yvec)\frac{\powersum_r(\xvec)}{r}\right).
\]
\end{proof}
\begin{corollary}\label{cor:p-explicit}
For every partition $\mu\vdash nk$,
if the $a_{\mu,\nu}^{(k)}$ are defined via the relation
\[
\omega\bigl(\rowMap_k(\elementaryE_\mu(\xvec))\bigr)
=
\sum_{\nu\vdash n}\frac{\defin{a_{\mu,\nu}^{(k)}}}{z_\nu}\powersum_\nu(\xvec),\quad \text{then}\quad a_{\mu,\nu}^{(k)}
=
[\monomial_\mu(\yvec)]
\prod_{j=1}^{\ell(\nu)} \Phi_{k,\nu_j}(\yvec).
\]
\end{corollary}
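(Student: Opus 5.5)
The plan is to expand the exponential from Lemma~\ref{lem:omega-rowmap-gf} in the power-sum basis of the $\xvec$-alphabet and then read off coefficients of $\monomial_\mu(\yvec)$.

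First, expand the exponential, using $\exp(\sum_r c_r \powersum_r/r)=\sum_\nu z_\nu^{-1}\prod_j c_{\nu_j}\,\powersum_\nu$. This is the standard identity obtained by writing $\exp(\sum_r c_r\powersum_r/r)=\prod_r\sum_{m_r\ge0}\frac{c_r^{m_r}\powersum_r^{m_r}}{r^{m_r}m_r!}$ and grouping by $\nu=\langle 1^{m_1},2^{m_2},\dotsc\rangle$, so that $\prod_r r^{m_r}m_r!=z_\nu$. Applied with $c_r=\Phi_{k,r}(\yvec)$, it gives
\[
\mathcal F_k(\xvec,\yvec)=\sum_{\nu}\frac{1}{z_\nu}\Bigl(\prod_{j=1}^{\ell(\nu)}\Phi_{k,\nu_j}(\yvec)\Bigr)\powersum_\nu(\xvec),
\]
with $\nu$ ranging over all partitions.

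Second, compare this with the definition $\mathcal F_k=\sum_\mu\omega(\rowMap_k(\elementaryE_\mu(\xvec)))\monomial_\mu(\yvec)$. Both sides lie in $\Lambda(\xvec)\otimes\Lambda(\yvec)$, and $\{\powersum_\nu(\xvec)\monomial_\mu(\yvec)\}$ is a basis over $\mathbb Q$. Hence we can take the coefficient of $\monomial_\mu(\yvec)$ on both sides and then expand in $\powersum_\nu(\xvec)$, which yields the stated formula for $a^{(k)}_{\mu,\nu}$.

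The one point needing care is the grading. Since $\elementaryE_{km}(\yvec)$ has degree $km$ and carries $t^m$, the series $B_k(t;\yvec)$ is homogeneous in the sense that $t$ has weight $k$. Therefore $\Phi_{k,r}(\yvec)$ is homogeneous of degree $kr$ in $\yvec$, and $\prod_j\Phi_{k,\nu_j}$ has degree $k|\nu|$. Consequently only $\nu\vdash n$ contributes to the coefficient of $\monomial_\mu(\yvec)$ with $\mu\vdash nk$. This matches the sum over $\nu\vdash n$ in the statement, and also $\deg\rowMap_k(\elementaryE_\mu)=n$.

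I expect the main obstacle to be nothing more than this degree bookkeeping; the rest is formal. One caveat is that $\log B_k$ is a well-defined formal power series only because $B_k(0;\yvec)=1$.
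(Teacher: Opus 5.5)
Your proposal is correct and is essentially the paper's proof. You expand $\exp\bigl(\sum_r \Phi_{k,r}(\yvec)\powersum_r(\xvec)/r\bigr)$ from Lemma~\ref{lem:omega-rowmap-gf} as $\sum_\nu z_\nu^{-1}\prod_j\Phi_{k,\nu_j}(\yvec)\,\powersum_\nu(\xvec)$, then compare coefficients of $\monomial_\mu(\yvec)$ with the defining series of $\mathcal F_k(\xvec,\yvec)$; your added remark that $\Phi_{k,r}$ is homogeneous of degree $kr$, so only $\nu\vdash n$ contributes when $\mu\vdash nk$, is a correct detail the paper leaves implicit.
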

\begin{proof}
By Lemma~\ref{lem:omega-rowmap-gf},
\[
\mathcal F_k(\xvec,\yvec)
=
\exp\left(\sum_{r\ge 1}\Phi_{k,r}(\yvec)\frac{\powersum_r(\xvec)}{r}\right)
=
\sum_\nu \frac{\powersum_\nu}{z_\nu}\prod_{j=1}^{\ell(\nu)} \Phi_{k,\nu_j}(\yvec).
\]
On the other hand,
\[
\mathcal F_k(\xvec,\yvec)
=
\sum_\mu \omega\bigl(\rowMap_k(\elementaryE_\mu(\xvec))\bigr)\monomial_\mu(\yvec).
\]
Comparing coefficients of $\monomial_\mu(\yvec)$ proves the claim.
\end{proof}

To study the $\powersum$-expansion of $\omega(\rowMap_k(\elementaryE_{\mu}))$, we first introduce the following symmetric function $f_{k,r}$.
\begin{definition}
    For integers $k,r\geq 1$, define the symmetric function
\begin{equation}\label{eq:fkr}
  \defin{f_{k,r}}
  \;\coloneqq\;
  \sum_{j=1}^{r} (-1)^{r+j}\,\frac{r}{j}
  \sum_{\alpha\,\models_j\, r} \elementaryE_{k\alpha_1}\cdots \elementaryE_{k\alpha_j},
\end{equation}
where the inner sum is over compositions
$\alpha=(\alpha_1,\dots,\alpha_j)$ of~$r$ into~$j$ positive parts
and $\elementaryE_n$ denotes the $n$-th elementary symmetric function.
\end{definition}

\begin{definition}
A \defin{$(k,r)$-cyclic word} is a sequence
\[
  w=(w_1,w_2,\dots,w_{kr})
\]
of positive integers, with indices read cyclically modulo $kr$,
such that:
\begin{enumerate}
  \item \textbf{Block ascent:}
  $w_i<w_{i+1}$ whenever $i$ is not a multiple of $k$;
  \item \textbf{Boundary descent:}
  $w_i\geq w_{i+1}$ whenever $i$ is a multiple of $k$, where
  $w_{kr+1} \coloneqq w_1$.
\end{enumerate}
Thus the positions are partitioned into $r$ consecutive blocks of
size $k$, each strictly increasing, and a weak descent is imposed at
every block boundary.
\end{definition}
The \defin{content} of such a word $w$ is the monomial
\[
  \mathbf{x}^w \coloneqq x_{w_1}x_{w_2}\cdots x_{w_{kr}}.
\]
We will show that $f_{k,r}$ is precisely the generating function of
all $(k,r)$-cyclic words by content.

\begin{lemma}\label{lem:double-count}
  On a cycle of $r$ labeled positions, for each $0\leq j\leq r$,
  \[
    \sum_{\substack{S\subseteq[r]\\|S|=j}} \elementaryE_{k\alpha(S)}
    \;=\;
    \frac{r}{j}\sum_{\alpha\,\models_j\, r} \elementaryE_{k\alpha},
  \]
  where $\alpha(S)$ denotes the composition of~$r$ into~$j$ parts
  given by the cyclic gap sizes of the subset~$S$.
\end{lemma}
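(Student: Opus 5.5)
We prove the identity by double counting pairs $(S,\alpha)$, where $S$ is a $j$-subset of the $r$ cyclically arranged positions and $\alpha$ is a composition of $r$ into $j$ parts. The case $j=0$ is degenerate (both sides are conventionally $0$, or the statement is only used for $j\ge1$), so we assume $1\le j\le r$.

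First we fix the convention for $\alpha(S)$. Write $S=\{s_1<s_2<\dots<s_j\}\subseteq[r]$. Its cyclic gap composition is
\[
\alpha(S)=(s_2-s_1,\dots,s_j-s_{j-1},\,r-s_j+s_1),
\]
read starting from the smallest element $s_1$. Since $\elementaryE_{k\alpha}=\elementaryE_{k\alpha_1}\cdots\elementaryE_{k\alpha_j}$ is a commutative product, $\elementaryE_{k\alpha(S)}$ depends only on the multiset of parts. It is therefore insensitive to which element of $S$ we start reading from.

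Next we set up the counting. Consider the set of pairs $(S,s)$ with $|S|=j$ and $s\in S$ a marked element, and define
\[
\Psi(S,s)=(\beta,s),
\]
where $\beta$ is the gap composition of $S$ read cyclically starting at $s$. This $\Psi$ is a bijection onto $\{\text{compositions }\beta\models_j r\}\times[r]$. Indeed, given $\beta$ and a starting point $s$, the set $S=\{s,\ s+\beta_1,\ s+\beta_1+\beta_2,\dots\}$ (taken mod $r$) is recovered uniquely. Since $\beta_1+\dots+\beta_j=r$ with all parts positive, these $j$ points are distinct and $s$ lies in $S$.

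Then we weight both sides by $\elementaryE_{k\beta}$ and sum. On the domain side, each $S$ has $j$ markings, and each marking gives a cyclic rotation of $\alpha(S)$, hence the same product $\elementaryE_{k\alpha(S)}$. The total is therefore $j\sum_{|S|=j}\elementaryE_{k\alpha(S)}$. On the codomain side, the total is $r\sum_{\beta\models_j r}\elementaryE_{k\beta}$. Dividing by $j$ gives the claim.

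The only point requiring care is the bijectivity of $\Psi$, in particular that rotation classes with nontrivial stabilizers cause no overcounting. This is handled automatically because we count pairs $(S,s)$ rather than unordered sets of compositions, so no orbit–stabilizer correction is needed. We should also state explicitly the convention that $\elementaryE_{k\alpha(S)}$ is independent of the starting point.
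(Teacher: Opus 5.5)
Your proposal is correct and is the same double-counting argument the paper uses. Marking an element $b\in S$ gives $j$ readings of the same product $\elementaryE_{k\alpha(S)}$ for each subset, while each composition of $r$ into $j$ parts arises from exactly $r$ starting positions; your explicit inverse of $\Psi$ (justifying the paper's ``each composition arises from exactly $r$ pairs'') and your remark that $j=0$ is degenerate, since $r/j$ is undefined there, are welcome extra care.
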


\begin{proof}
  Form pairs $(S,b)$ where $|S|=j$ and $b\in S$ is a distinguished
  element. There are $j\binom{r}{j}$ such pairs.  Each pair
  determines a linear composition of~$r$ into~$j$ parts by reading
  the gap sizes of~$S$ starting from~$b$. Since
  $\elementaryE_{k\alpha_1}\cdots \elementaryE_{k\alpha_j}$ is invariant under cyclic
  reordering of~$\alpha$, every choice of~$b$ in a fixed~$S$ gives
  the same product $\elementaryE_{k\alpha(S)}$.

  Conversely, each linear composition
  $\alpha=(\alpha_1,\dots,\alpha_j)$ arises from exactly $r$ pairs
  (one for each starting position $b\in[r]$). Therefore
  \[
    j\sum_{|S|=j} \elementaryE_{k\alpha(S)}
    = r\sum_{\alpha\,\models_j\, r} \elementaryE_{k\alpha}. \qedhere
  \]
\end{proof}

\begin{theorem}\label{thm:mpos}
  $f_{k,r}$ is monomial-positive for all $k,r\geq 1$.
\end{theorem}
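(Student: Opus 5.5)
We will show that $f_{k,r}$ equals the generating function $\sum_w \xvec^w$ over all $(k,r)$-cyclic words $w$. This sum is manifestly monomial-positive, and it is symmetric because $f_{k,r}$ is. The argument is an inclusion--exclusion over the set of block boundaries at which the weak-descent condition is violated.

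\textbf{Step 1: relax the boundary conditions.} Place the $r$ blocks of size $k$ on a cycle, with boundary $i$ sitting between block $i$ and block $i+1$ (indices modulo $r$). For a subset $S\subseteq[r]$ of boundaries, let $G_S$ be the generating function of words of length $kr$ such that:
\begin{itemize}
\item each block is strictly increasing;
\item at every boundary in $[r]\setminus S$ there is a strict ascent $w_{ik}<w_{ik+1}$;
\item the boundaries in $S$ are unconstrained.
\end{itemize}
The boundaries in $S$ cut the cycle into $|S|$ arcs, with sizes given by the cyclic gap composition $\alpha(S)$. Each arc is a strictly increasing run of $k\alpha_m$ letters, chosen independently, so
\[
G_S=\elementaryE_{k\alpha_1(S)}\cdots\elementaryE_{k\alpha_{|S|}(S)}\qquad\text{for } S\neq\emptyset .
\]
For $S=\emptyset$ we get $G_\emptyset=0$, since a strictly increasing cycle is impossible.

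\textbf{Step 2: inclusion--exclusion.} We want every boundary to be a weak descent, i.e. to fail the strict-ascent condition. Let $N_T$ be the generating function of words with strictly increasing blocks and strict ascents at exactly the boundaries in $T$. Then
\[
G_S=\sum_{T\supseteq [r]\setminus S} N_T .
\]
By Möbius inversion on the boolean lattice,
\[
N_\emptyset=\sum_{S\subseteq[r]}(-1)^{r-|S|}\,G_S .
\]
Grouping the terms by $j=|S|$ and applying Lemma~\ref{lem:double-count} to $\sum_{|S|=j}G_S$ gives
\[
N_\emptyset=\sum_{j=1}^r(-1)^{r-j}\,\frac{r}{j}\sum_{\alpha\models_j r}\elementaryE_{k\alpha},
\]
which is exactly \eqref{eq:fkr}, because $(-1)^{r-j}=(-1)^{r+j}$. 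Since $N_\emptyset$ is by definition the generating function of $(k,r)$-cyclic words, this proves $f_{k,r}=\sum_w\xvec^w$. Hence every monomial coefficient of $f_{k,r}$ is a nonnegative integer, namely the number of $(k,r)$-cyclic words with the given content.

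\textbf{Main obstacle.} The delicate point is the product formula $G_S=\prod_m \elementaryE_{k\alpha_m(S)}$ in Step 1. Within an arc, every internal position (whether inside a block or at a non-$S$ boundary) is required to be a strict ascent, so the arc is one strictly increasing run and contributes an elementary symmetric function. Consecutive arcs meet only at boundaries in $S$, which carry no constraint, so the arcs are independent.

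We also need the cyclic labelling of arcs to match the bookkeeping in Lemma~\ref{lem:double-count}. That lemma accounts for the labelled cycle positions and absorbs the overcount by cyclic rotations into the factor $r/j$.

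Finally, the edge cases need a check. When $j=r$, every boundary lies in $S$, each arc is a single block, and the term is $\elementaryE_k^r$, consistent with \eqref{eq:fkr}. When $k=1$, the blocks have length one and the same argument goes through unchanged.
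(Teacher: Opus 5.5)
Your proof is correct and follows the paper's argument: inclusion--exclusion over the $r$ cyclic block boundaries, the product formula $\elementaryE_{k\alpha_1}\cdots\elementaryE_{k\alpha_j}$ for merged strictly increasing runs, and Lemma~\ref{lem:double-count} to produce the factor $r/j$. The differences are cosmetic: you index by the set of unconstrained boundaries rather than the forced-ascent ones, which matches the convention of Lemma~\ref{lem:double-count} more cleanly, and you state explicitly that the all-ascent term vanishes, which the paper handles implicitly by starting the sum at $j=1$.
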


\begin{proof}
  Let $\mathcal{W}$ be the set of words
  $(w_1,\dots,w_{kr})$ satisfying only the block-ascent
  condition~(1), with no restriction at block boundaries.
  The generating function of~$\mathcal{W}$ by content is $\elementaryE_k^r$.

  For $i\in[r]$, let $A_i\subseteq\mathcal{W}$ be the set of words
  with a strict ascent at boundary~$i$, i.e.,
  $w_{ik}<w_{ik+1\bmod kr}$.  By inclusion-exclusion,
  \begin{equation}\label{eq:ie}
    \sum_{w\in\overline{A_1}\cap\cdots\cap\overline{A_r}}
    \!\!\mathbf{x}^w
    \;=\;
    \sum_{S\subseteq[r]}(-1)^{|S|}\,G_S,
  \end{equation}
  where
  $G_S$ is the generating function for words in $\mathcal{W}$ with
  a strict ascent at every boundary in~$S$.

  When $|S|=r-j$ boundaries are forced ascending, adjacent blocks
  merge into $j$~strictly increasing super-blocks of lengths
  $k\alpha_1,\dots,k\alpha_j$, so $G_S=\elementaryE_{k\alpha_1}\cdots
  \elementaryE_{k\alpha_j}$ where $\alpha=\alpha(S)$ is the cyclic gap
  composition of the $j$~unmerged boundaries.

  Grouping by $j=r-|S|$ and applying
  Lemma~\ref{lem:double-count},
  \[
    \eqref{eq:ie}
    = \sum_{j=1}^{r}(-1)^{r-j}
      \sum_{\substack{S\subseteq[r]\\|S|=r-j}} \elementaryE_{k\alpha(S)}
    = \sum_{j=1}^{r}(-1)^{r+j}\,\frac{r}{j}
      \sum_{\alpha\,\models_j\,r} \elementaryE_{k\alpha}
    = f_{k,r}.
  \]
  The left-hand side of~\eqref{eq:ie} is a non-negative sum of
  monomials, so $f_{k,r}$ is monomial-positive.
\end{proof}

\begin{corollary}\label{coro:p-positive}

For integers $k,r\geq 1$,
   \[ 
    \Phi_{k,r}(\mathbf y)=f_{k,r}.
    \]
   Therefore, $\omega(\rowMap_k(\elementaryE_{\mu}))$ is $\powersum$-positive.
\end{corollary}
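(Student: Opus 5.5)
The plan is to compute $\Phi_{k,r}$ directly from its definition $\Phi_{k,r}(\yvec) = -r[t^r]\log B_k(t;\yvec)$ and recognize the result as \eqref{eq:fkr}. Write $B_k(t;\yvec) = 1 - u$ with
\[
u \coloneqq \sum_{m\ge 1} (-1)^{m-1}\elementaryE_{km}(\yvec)\,t^m .
\]
Then
\[
-\log B_k(t;\yvec) = \sum_{j\ge 1} \frac{u^j}{j}.
\]
Expanding $u^j$ as a sum over ordered $j$-tuples $(\alpha_1,\dots,\alpha_j)$ of positive integers gives
\[
[t^r]\,u^j = \sum_{\alpha \models_j r} (-1)^{r-j}\elementaryE_{k\alpha_1}\cdots\elementaryE_{k\alpha_j},
\]
since the sign is $\prod_i (-1)^{\alpha_i-1} = (-1)^{r-j}$. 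Multiplying by $r/j$ and summing over $1\le j\le r$ yields exactly
\[
\sum_{j=1}^r (-1)^{r+j}\frac{r}{j}\sum_{\alpha\models_j r}\elementaryE_{k\alpha} = f_{k,r},
\]
because $(-1)^{r-j}=(-1)^{r+j}$. This establishes $\Phi_{k,r}=f_{k,r}$, and it is purely formal power series bookkeeping.

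For positivity, invoke Corollary~\ref{cor:p-explicit}:
\[
a^{(k)}_{\mu,\nu} = [\monomial_\mu(\yvec)]\prod_j \Phi_{k,\nu_j}(\yvec) = [\monomial_\mu(\yvec)]\prod_j f_{k,\nu_j}(\yvec).
\]
By Theorem~\ref{thm:mpos}, each $f_{k,\nu_j}$ is a nonnegative (indeed integer) combination of monomials. A product of monomial-positive symmetric functions is monomial-positive, since products of monomials with nonnegative coefficients stay nonnegative, and coefficients of $\monomial_\mu$ are coefficients of $\yvec^\mu$. Hence $a^{(k)}_{\mu,\nu}\ge 0$. Since $z_\nu>0$, the expansion
\[
\omega(\rowMap_k(\elementaryE_\mu))=\sum_\nu \frac{a^{(k)}_{\mu,\nu}}{z_\nu}\powersum_\nu
\]
has nonnegative coefficients.

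The only genuine subtlety is that the paper defines positivity with nonnegative \emph{integer} coefficients, while $a^{(k)}_{\mu,\nu}/z_\nu$ need not be an integer. I would therefore state the conclusion as nonnegativity of the $\powersum_\nu$-coefficients (equivalently, that the $a^{(k)}_{\mu,\nu}$ are nonnegative integers counting products of $(k,\nu_j)$-cyclic words of total content $\mu$), matching the sense of \eqref{eq:ass-powersum}. Beyond this point of interpretation, no step should be an obstacle; the sign check in $[t^r]u^j$ is the one place to be careful.
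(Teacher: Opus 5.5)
Your proof is correct and is essentially the paper's. The paper writes $B_k=1+u$ with $u=\sum_{m\ge1}(-1)^m\elementaryE_{km}t^m$ and expands $-\log(1+u)=\sum_{j\ge1}\frac{(-1)^j}{j}u^j$, which is the same sign bookkeeping as your $1-u$ version; the $\powersum$-positivity then follows from Corollary~\ref{cor:p-explicit} and Theorem~\ref{thm:mpos} exactly as you spell out. Your caveat about integrality is also accurate: the coefficients $a^{(k)}_{\mu,\nu}/z_\nu$ are nonnegative rationals but not integers in general (already $\omega(\elementaryE_2)=\tfrac12\powersum_{11}+\tfrac12\powersum_2$), so ``$\powersum$-positive'' here has to mean nonnegative coefficients.
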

\begin{proof}
Recall
\[
\Phi_{k,r}(\mathbf y)= -r[t^r]\log\left(\sum_{m\ge 0}(-1)^m \elementaryE_{km}(\mathbf y)t^m\right)
\]

  Since
\[
-\log(1+u)=\sum_{j\ge 1}\frac{(-1)^j}{j}u^j,
\]
setting $u=\sum_{m\ge 1}(-1)^m \elementaryE_{km}(\mathbf y)t^m$
gives
\[
-r\log\left( \sum_{m\ge 0}(-1)^m \elementaryE_{km}(\mathbf y)t^m\right)
=
r\sum_{j\ge 1}\frac{(-1)^j}{j}
\left(\sum_{m\ge 1}(-1)^m \elementaryE_{km}(\mathbf y)t^m\right)^j.
\]
Taking the coefficient of $t^r$ gives
\[
\Phi_{k,r}(\mathbf y)
=
r\sum_{j\ge 1}\frac{(-1)^j}{j}
\sum_{\substack{m_1+\cdots+m_j=r \\ m_1,\dotsc,m_j\ge 1}}
(-1)^{m_1+\cdots+m_j}
\elementaryE_{km_1}\cdots \elementaryE_{km_j}.
\]
Since $m_1+\cdots+m_j=r$, the sign $(-1)^j \cdot (-1)^r = (-1)^{r+j}$,
\[
\Phi_{k,r}(\mathbf y)
=
\sum_{j\ge 1}(-1)^{r+j}\frac{r}{j}
\sum_{\substack{m_1+\cdots+m_j=r \\ m_1,\dotsc,m_j\ge 1}}
\elementaryE_{km_1}\cdots \elementaryE_{km_j}= f_{k,r}.
\]
\end{proof}

Concretely, a $(k,r)$-cyclic word is a sawtooth pattern on the
cycle: $r$~teeth, each rising strictly for $k$~steps, then dropping
(weakly) at the boundary before the next tooth begins.  For
$k=2$, $r=3$:
\[
  \underbrace{w_1 < w_2}_{\text{block 1}}
  \geq
  \underbrace{w_3 < w_4}_{\text{block 2}}
  \geq
  \underbrace{w_5 < w_6}_{\text{block 3}}
  \geq w_1.
\]

\begin{remark}
  The symmetric function $f_{k,r}$ can be identified with a
  \emph{cylindric ribbon Schur function}: the generating function
  for semistandard fillings of a ribbon with $r$~columns of
  height~$k$ on a cylinder, where columns are strictly increasing
  and consecutive columns satisfy the usual ribbon descent
  condition.  The cylindric (rather than ordinary) ribbon accounts
  for the wrap-around inequality $w_{kr}\geq w_1$.
\end{remark}

\subsection{The elementary expansion}

Recall $a_\lambda=(k\lambda+(k-1)\rho)/(k-1)\rho$. 
Using the standard involution $\omega$ on symmetric functions, defined by
$\omega(\completeH_\alpha)=\elementaryE_\alpha$ and $\omega(\schur_\lambda)=\schur_{\lambda'}$, we have
\[
[\elementaryE_\nu]\rowMap_k(\elementaryE_\mu) = [\completeH_\nu]\omega(\rowMap_k(\elementaryE_\mu)).
\]
Applying $\omega$ to \eqref{eq:emuInSchur} yields
\begin{equation}\label{eq:hCoeff}
\omega(\rowMap_k(\elementaryE_\mu))
= \sum_{\lambda \vdash n}
K_{a_\lambda',\,\mu}\; \schur_{\lambda'} 
\quad 
\implies 
\quad 
[\completeH_\nu]\omega(\rowMap_k(\elementaryE_\mu))
=
\sum_{\lambda \vdash n}
K_{a_\lambda',\,\mu}\,
[\completeH_\nu] \schur_{\lambda'}.
\end{equation}
Using the inverse Kostka expansion
\begin{equation}
    \schur_{\lambda'} = \sum_{\nu} K^{-1}_{\nu,\lambda'} \completeH_\nu,
\end{equation}
we obtain
\begin{equation}\label{eq:keyFormula}
[\elementaryE_\nu]\rowMap_k(\elementaryE_\mu)
=
\sum_{\lambda \vdash n}
K_{a_\lambda',\,\mu}
\cdot K^{-1}_{\nu,\lambda'}.
\end{equation}

Recall (see \cite{EgeciogluRemmel1990}) 
that the inverse Kostka number
$K^{-1}_{\nu,\lambda'}$ can be interpreted as a signed enumeration of
\emph{special rim hook tableaux} ($\SRHT$) of shape $\lambda'$ and type $\nu$:
\[
K^{-1}_{\nu,\lambda'}
=
\sum_{S \in \SRHT(\lambda',\nu)} \sgn(S),
\]
where the sign of $S$ is given by
\[
\defin{\sgn(S)} \coloneqq (-1)^{\sum_i (\text{number of rows of the $i$-th rim hook} - 1)}.
\]

On the other hand, the coefficient
$K_{a_\lambda,\,\mu}$ counts semistandard Young tableaux
of skew shape $a_\lambda$ and content $\mu$.
Therefore \eqref{eq:keyFormula} becomes
\begin{equation}\label{eq:signedCount}
[\elementaryE_\nu]\rowMap_k(\elementaryE_\mu)
=
\sum_{\lambda \vdash n}
\sum_{T \in \SSYT(a_\lambda',\mu)}
\sum_{S \in \SRHT(\lambda',\nu)}
\sgn(S).
\end{equation}

Thus the $\elementaryE$-coefficient is expressed as a signed count of triples
\[
(\lambda, T, S),
\]
where $T$ is a semistandard Young tableau of skew shape
$a_\lambda$ and content $\mu$, and
$S$ is a special rim hook tableau of shape $\lambda'$ and type $\nu$.

\begin{example}
Let $\mu = 22211$, $k=2$. Then 
in order to compute $\rowMap_{2}(\elementaryE_\mu)$ 
we need to consider the following skew shapes:
\[
\underbrace{\ydiagram{1, 1, 1, 1, 1,1,1,1}}_{\lambda=4}
\quad
\underbrace{\ydiagram{1+1, 2, 1, 1, 1,1,1}}_{\lambda=31}
\quad 
\underbrace{\ydiagram{1+1,2, 2,2,1}}_{\lambda=22}
\quad 
\underbrace{\ydiagram{2+1,1+2, 2,1,1,1}}_{\lambda=211}
\quad 
\underbrace{\ydiagram{3+1, 2+2, 1+2, 2, 1}}_{\lambda=1^4}
\]
The coefficient of $\elementaryE_{31}$ is then the sum over all
semistandard Young tableaux of the shapes with weight $22211$,
each combined with a special rim hook tableau same shape with content $31$.
Only the shapes $\lambda \in \{22,211,1^4\}$ admit such fillings.
Moreover, only $\lambda \in \{4, 31,22,211\}$ allow for special rim hook tableaux with content $\nu = 31$. 
This gives us only two cases.

\medskip 
\noindent
\textbf{Case $\lambda=22$ (negative):} There are two semistandard Young tableaux and one special rim hook tableau. 
Note that the sign is negative.
\[
\ytableaushort{{\none}1,12,23,35,4} 
\cdot 
\rimHookTableau[scale=0.45,Dot]{01,21,21,11,1}
\qquad 
\ytableaushort{{\none}1,12,23,34,5} \cdot 
\rimHookTableau[scale=0.45,Dot]{01,21,21,11,1}
\]

\medskip 
\noindent
\textbf{Case $\lambda=211$ (positive):}
Here, there is only one special rim hook tableau, and 
16 semistandard fillings:
\[
\rimHookTableau[scale=0.45,Dot]{001,011,11,1,2,2}
\]
\begin{align*}
&\begin{ytableau}
\none & \none & 3\\ \none & 1 & 5\\ 1 & 2\\ 2\\ 3\\ 4\\
\end{ytableau}\quad
\begin{ytableau}
\none & \none & 3\\ \none & 1 & 4\\ 1 & 2\\ 2\\ 3\\ 5\\
\end{ytableau}\quad
\begin{ytableau}
\none & \none & 2\\ \none & 1 & 5\\ 1 & 3\\ 2\\ 3\\ 4\\
\end{ytableau}\quad
\begin{ytableau}
\none & \none & 2\\ \none & 1 & 4\\ 1 & 3\\ 2\\ 3\\ 5\\
\end{ytableau}\quad
\begin{ytableau}
\none & \none & 2\\ \none & 1 & 3\\ 1 & 5\\ 2\\ 3\\ 4\\
\end{ytableau}\quad
\begin{ytableau}
\none & \none & 2\\ \none & 1 & 3\\ 1 & 4\\ 2\\ 3\\ 5\\
\end{ytableau}\quad
\begin{ytableau}
\none & \none & 2\\ \none & 1 & 3\\ 1 & 3\\ 2\\ 4\\ 5\\
\end{ytableau}\quad
\begin{ytableau}
\none & \none & 2\\ \none & 1 & 3\\ 1 & 2\\ 3\\ 4\\ 5\\
\end{ytableau}\\[1em]
&\begin{ytableau}
\none & \none & 1\\ \none & 2 & 5\\ 1 & 3\\ 2\\ 3\\ 4\\
\end{ytableau}\quad
\begin{ytableau}
\none & \none & 1\\ \none & 2 & 4\\ 1 & 3\\ 2\\ 3\\ 5\\
\end{ytableau}\quad
\begin{ytableau}
\none & \none & 1\\ \none & 2 & 3\\ 1 & 5\\ 2\\ 3\\ 4\\
\end{ytableau}\quad
\begin{ytableau}
\none & \none & 1\\ \none & 2 & 3\\ 1 & 4\\ 2\\ 3\\ 5\\
\end{ytableau}\quad
\begin{ytableau}
\none & \none & 1\\ \none & 2 & 3\\ 1 & 3\\ 2\\ 4\\ 5\\
\end{ytableau}\quad
\begin{ytableau}
\none & \none & 1\\ \none & 2 & 2\\ 1 & 3\\ 3\\ 4\\ 5\\
\end{ytableau}\quad
\begin{ytableau}
\none & \none & 1\\ \none & 1 & 3\\ 2 & 2\\ 3\\ 4\\ 5\\
\end{ytableau}\quad
\begin{ytableau}
\none & \none & 1\\ \none & 1 & 2\\ 2 & 3\\ 3\\ 4\\ 5\\
\end{ytableau}
\end{align*}

On the computer, we can calculate
\[
 \rowMap_{2}(\elementaryE_\mu) = 136\elementaryE_4 + 2 \elementaryE_{22} + 14 \elementaryE_{31}
\]
and the coefficient of $\elementaryE_{31}$ agrees with the computation above.
\end{example}
\begin{conjecture}[generalizes Amdeberhan–Shareshian–Stanley]\label{conje-positive}
    Let $\mu\vdash kn$. 
The symmetric function $\rowMap_k(\elementaryE_\mu)$ is $\elementaryE$-positive with 
non-negative integer coefficients.
\end{conjecture}

\begin{proposition}
Let $k\geq1, \mu\vdash nk$ and $T_k=\omega\rowAdj{k}\omega$,
 then the Conjecture~\ref{conje-positive} holds if and only 
 if $T_k(\monomial_\mu(y))$ is monomial positive. 
 Also, $T_k(\monomial_\mu(\yvec))=[\elementaryE_\mu(\xvec)]\sum_\lambda\monomial_\lambda(\xvec)\elementaryE_{k\lambda}(\yvec).$
\end{proposition}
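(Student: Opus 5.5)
The plan is to rewrite the $\elementaryE$-coefficients of $\rowMap_k(\elementaryE_\mu)$ as Hall inner products and then move all operators to the other side using adjointness. Since $\{\elementaryE_\nu\}$ and $\{\monomial_\nu\}$ are not dual to each other, we first pass to the dual pair $\{\completeH_\nu\},\{\monomial_\nu\}$ by applying $\omega$.

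\textbf{Extracting the coefficients.} Write $\rowMap_k(\elementaryE_\mu)=\sum_{\nu\vdash n} c_{\mu,\nu}\elementaryE_\nu$. Applying $\omega$ gives $\omega\rowMap_k(\elementaryE_\mu)=\sum_\nu c_{\mu,\nu}\completeH_\nu$, so
\[
c_{\mu,\nu}=\langle \omega\rowMap_k(\elementaryE_\mu),\monomial_\nu\rangle
=\langle \rowMap_k(\elementaryE_\mu),\omega\monomial_\nu\rangle
=\langle \elementaryE_\mu,\rowAdj{k}\omega(\monomial_\nu)\rangle .
\]
Here we use that $\omega$ is self-adjoint and Proposition~\ref{prop:adjMap}. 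Since $\elementaryE_\mu=\omega\completeH_\mu$, this equals $\langle \completeH_\mu,\omega\rowAdj{k}\omega(\monomial_\nu)\rangle=\langle\completeH_\mu,T_k(\monomial_\nu)\rangle$. This is exactly the coefficient of $\monomial_\mu$ in $T_k(\monomial_\nu)$.

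\textbf{The equivalence.} Combining the two computations,
\[
T_k(\monomial_\nu)=\sum_{\mu\vdash kn} c_{\mu,\nu}\,\monomial_\mu .
\]
The conjecture for all $\mu\vdash kn$ (with $n$ fixed) is equivalent to $c_{\mu,\nu}\ge0$ for all $\mu,\nu$. Integrality is automatic: the transition matrix from $\elementaryE$ to $\monomial$ is unitriangular and integral by Lemma~\ref{lem:e-in-m}, and $\rowMap_k(\elementaryE_\mu)$ has integer $\monomial$-coefficients. So the conjecture holds if and only if every $T_k(\monomial_\nu)$ is monomial positive. This is the intended reading of the statement, with the roles of the indices matching the displayed identity. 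The main point to state carefully is that the quantifier "for all $\mu$" in the conjecture corresponds to "for all indices" on the $T_k$ side.

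\textbf{The generating identity.} It suffices to show
\[
\sum_\lambda \monomial_\lambda(\xvec)\,\elementaryE_{k\lambda}(\yvec)=\sum_\nu \elementaryE_\nu(\xvec)\,T_k(\monomial_\nu(\yvec)).
\]
On the right, since $\omega\completeH_r=\elementaryE_r$, the map $T_k$ is the ring homomorphism with $\elementaryE_r\mapsto \elementaryE_{kr}$. Start from the Cauchy identity $\sum_\nu \monomial_\nu(\yvec)\completeH_\nu(\xvec)=\sum_\lambda \monomial_\lambda(\xvec)\completeH_\lambda(\yvec)$, valid since both sides equal $\prod_{i,j}(1-x_iy_j)^{-1}$. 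Apply $\omega$ in $\xvec$ and in $\yvec$ to get $\sum_\nu \omega\monomial_\nu(\yvec)\,\elementaryE_\nu(\xvec)=\sum_\lambda \monomial_\lambda(\xvec)\,\elementaryE_\lambda(\yvec)$, whose left side is symmetric in the roles described. Then apply $T_k$ in the $\yvec$-variables. Because $T_k$ is a homomorphism, $T_k(\elementaryE_\lambda(\yvec))=\elementaryE_{k\lambda}(\yvec)$, so the right side becomes $\sum_\lambda\monomial_\lambda(\xvec)\elementaryE_{k\lambda}(\yvec)$.

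For the left side, note that $\sum_\nu \completeH_\nu(\xvec)\monomial_\nu(\yvec)=\sum_\nu\elementaryE_\nu(\xvec)\,\omega\monomial_\nu(\yvec)$ after applying $\omega$ in $\xvec$ only. The cleanest route is therefore to apply $\omega$ only in $\xvec$, giving $\sum_\nu \elementaryE_\nu(\xvec)\monomial_\nu(\yvec)=\sum_\lambda \monomial_\lambda(\xvec)\,\omega\completeH_\lambda(\yvec)$ by dual bases. Now apply $T_k$ in $\yvec$. The right side becomes $\sum_\lambda\monomial_\lambda(\xvec)\,T_k(\elementaryE_\lambda(\yvec))=\sum_\lambda \monomial_\lambda(\xvec)\elementaryE_{k\lambda}(\yvec)$, and the left side becomes $\sum_\nu\elementaryE_\nu(\xvec)T_k(\monomial_\nu(\yvec))$. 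Extracting $[\elementaryE_\mu(\xvec)]$ from both sides gives the claimed formula.

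The only real obstacle is bookkeeping: getting $\omega$ on the correct alphabet and identifying $T_k$ as the homomorphism $\elementaryE_r\mapsto\elementaryE_{kr}$.
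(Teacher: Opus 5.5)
Your argument is correct. For the explicit formula it coincides with the paper's proof: apply $T_k$, the ring homomorphism $\elementaryE_r\mapsto\elementaryE_{kr}$, in the $\yvec$-alphabet to the dual Cauchy kernel $\prod_{i,j}(1+x_iy_j)=\sum_\nu\elementaryE_\nu(\xvec)\monomial_\nu(\yvec)=\sum_\lambda\monomial_\lambda(\xvec)\elementaryE_\lambda(\yvec)$.

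For the equivalence you take a different, though closely related, route. The paper stays with the kernel. It notes that the image $\prod_i\sum_r\elementaryE_{kr}(\yvec)x_i^r$ is also what one gets by applying $\rowMap_k$ in $\xvec$ instead, namely $\sum_\mu\rowMap_k(\elementaryE_\mu(\xvec))\monomial_\mu(\yvec)$; this is the computation in the proof of Lemma~\ref{lem:omega-rowmap-gf}. Comparing this with $\sum_\nu\elementaryE_\nu(\xvec)\,T_k(\monomial_\nu(\yvec))$ gives $[\monomial_\mu]T_k(\monomial_\nu)=[\elementaryE_\nu]\rowMap_k(\elementaryE_\mu)$ and the explicit formula from a single identity.

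You obtain the same coefficient identity by a direct Hall inner product computation from Proposition~\ref{prop:adjMap} and self-adjointness of $\omega$. The kernel identity is just the generating-function form of this adjointness. What your version buys is explicit bookkeeping that the paper leaves tacit. The index of $T_k(\monomial_{\cdot})$ must be a partition of $n$ while $\mu\vdash kn$, so the statement has to be read as you read it. The equivalence is between ``all $\mu\vdash kn$'' and ``all $\nu\vdash n$''. And integrality of the $\elementaryE$-coefficients is automatic.

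One cleanup. The sentence where you apply $\omega$ in both alphabets is false as written. Its left side is $\omega_{\xvec}\omega_{\yvec}$ applied to the Cauchy kernel, which fixes $\prod_{i,j}(1-x_iy_j)^{-1}$. Its right side $\sum_\lambda\monomial_\lambda(\xvec)\elementaryE_\lambda(\yvec)$ equals $\prod_{i,j}(1+x_iy_j)$, and the two already differ in degree $2$. Your final route only needs the dual Cauchy identity, or equivalently the fact that $\omega_{\xvec}$ and $\omega_{\yvec}$ agree on the Cauchy kernel because $\omega$ is an isometry. So simply delete that sentence.
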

\begin{proof}
Since $T_k(\elementaryE_r)=\elementaryE_{kr}$, we have
\[
T_k(\prod_{i,j}(1+x_i y_j))=T_k(\prod_i\left(\sum_{r\ge 0} \elementaryE_r(\yvec)x_i^r\right))
=\prod_i\left(\sum_{r\ge 0} \elementaryE_{kr}(\yvec)x_i^r\right)
=\sum_\mu\rowMap_k(\elementaryE_\mu(\xvec))\monomial_\mu(\yvec).
\]
Note that $T_k(\prod_{i,j}(1+x_i y_j))=\sum_\mu\elementaryE_\mu(\xvec)T_k(\monomial_\mu(\yvec))$ and we are done.
\end{proof}

\begin{lemma}\label{lemma:two-row-support}
Fix $k\ge 1$, and for each $c\ge 0$, we have
\[
\rowMap_k(\elementaryE_c^{2k})=\sum_{t=0}^{c} A_t^{(k)}\elementaryE_{(c+t,c-t)}.
\]
And
\[
\rowMap_k(\elementaryE_{c+1}^{2k})
=\sum_{t=0}^{c} A_t^{(k)}\, \elementaryE_{(c+1+t,c+1-t)}
+A_{c+1}^{(k)}\elementaryE_{2c+2}.
\]
\end{lemma}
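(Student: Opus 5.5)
The plan is to go through the Schur expansion of Theorem~\ref{thm:emuSchurExpansion} and then convert to the $\elementaryE$-basis. There are two things to establish: the $\elementaryE$-support consists of two-part partitions, and the coefficients are stable under $c\mapsto c+1$. I will take $A_t^{(k)}$ to be defined by the first identity, and prove the second identity with the same numbers $A_t^{(k)}$, plus one new top coefficient $A^{(k)}_{c+1}$.

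\emph{Step 1 (support).} By \eqref{eq:emuInSchur} with $\mu=c^{2k}$, the coefficient of $\schur_\lambda$ is $K_{a_\lambda',\,c^{2k}}$. The $i$-th row of $a_\lambda$ has length $k\lambda_i$, so $a_\lambda'$ has a column of length $k\lambda_i$. A semistandard filling with entries in $\{1,\dots,2k\}$ needs every column to have length at most $2k$, so the coefficient vanishes unless $\lambda_1\le 2$. Hence only $\lambda=(2^{c-t},1^{2t})$ with $0\le t\le c$ contribute. Write $d_t(c)\coloneqq K_{a_\lambda',c^{2k}}$ for this $\lambda$.

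\emph{Step 2 (to the $\elementaryE$-basis).} For a two-column shape, the dual Jacobi--Trudi identity (equivalently $\omega$ of the two-row Jacobi--Trudi identity) gives
\[
\schur_{(2^{a},1^{b})}=\elementaryE_{(a+b,a)}-\elementaryE_{(a+b+1,a-1)} .
\]
With $a=c-t$ and $b=2t$ this reads $\schur_{(2^{c-t},1^{2t})}=\elementaryE_{(c+t,c-t)}-\elementaryE_{(c+t+1,c-t-1)}$. Substituting, $\rowMap_k(\elementaryE_c^{2k})$ is supported on the $\elementaryE_{(c+t,c-t)}$, and its coefficients are the telescoped differences $d_t(c)-d_{t-1}(c)$, with the convention $d_{-1}=0$. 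This proves the first identity, with $A_t^{(k)}$ equal to these differences.

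\emph{Step 3 (stability; the main obstacle).} The second identity requires $d_t(c+1)=d_t(c)$ for $0\le t\le c$; the new coefficient $A^{(k)}_{c+1}$ then comes from $t=c+1$. Each part $\lambda_i=2$ of $\lambda$ yields a column of length $2k$ in $a_\lambda'$. With content in $\{1,\dots,2k\}$, such a column is forced to be $1,2,\dots,2k$ from top to bottom. I would try to show that deleting one such full column, namely the one coming from the first row of $\lambda$, is a bijection
\[
\SSYT\bigl(a'_{(2^{c+1-t},1^{2t})},(c+1)^{2k}\bigr)\;\longrightarrow\;\SSYT\bigl(a'_{(2^{c-t},1^{2t})},c^{2k}\bigr).
\]
The delicate point is the staircase shift $(k-1)\rho$. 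Changing $\ell(\lambda)$ changes $\rho$, so I must check that the skew shape $a_\lambda'$ obtained after removing the full column is exactly the smaller $a_\lambda'$ up to translation. I also need the row (weak-increase) conditions across the removed column to be automatic, since a column containing every value $1,\dots,2k$ imposes only the conditions already implied in the neighbours.

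If this geometric check fails, the fallback is the generating function from the proposition preceding Lemma~\ref{lemma:two-row-support}. There I would restrict to compare coefficients of $\monomial_{c^{2k}}(\yvec)$ and $\monomial_{(c+1)^{2k}}(\yvec)$, and extract the stability from the factorisation $\elementaryE_{c+1}^{2k}\leftrightarrow \elementaryE_{1}^{2k}\cdot\elementaryE_c^{2k}$ in $2k$ variables $\yvec$. That fallback seems harder to make rigorous than the column-deletion bijection.
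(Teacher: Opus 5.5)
Your proposal is correct, but it works on the Schur side, whereas the paper stays in the monomial basis. The paper's argument runs as follows. Each factor $\elementaryE_c$ contributes any variable at most once, so after $\rowMap_k$ every exponent lies in $\{0,1,2\}$ and only the $\monomial_{2^{c-d}1^{2d}}$ occur. A variable of exponent $2k$ must come from every one of the $2k$ factors, so the coefficient of $\monomial_{2^{c-d}1^{2d}}$ is $[x_1^k\cdots x_{2d}^k]\,\elementaryE_d^{2k}$, which is independent of $c$. Finally, the expansion $\elementaryE_{(c+t,c-t)}=\sum_{d=t}^{c}\binom{2d}{d-t}\monomial_{2^{c-d}1^{2d}}$ is unitriangular with $c$-free entries, so the $A^{(k)}_t$ are $c$-free.

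Your forced column $1,2,\ldots,2k$ is the Schur-side counterpart of those forced variables, and the check you flagged as delicate does go through. With $\ell=\ell(\lambda)$, column $i$ of $a'_\lambda$ occupies rows $(k-1)(\ell-i)+1,\ldots,(k-1)(\ell-i)+k\lambda_i$. The staircase for $\ell-1$ parts is the tail of the staircase for $\ell$ parts, so deleting column $1$ leaves exactly $a'_{\lambda^-}$, with $\lambda^-=(2^{c-t},1^{2t})$, in the same rows and with no translation. The row conditions against column $1$ are also automatic. If $\lambda_2=2$, the full column $2$ starts $k-1$ rows higher, and the $k+1$ shared rows only require $j\le j+k-1$. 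If $\lambda_2=1$, the two columns share a single row, in which column $1$ holds the entry $1$. So the fallback via the generating function is unnecessary.

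What your route buys is the Schur expansion itself, $\rowMap_k(\elementaryE_c^{2k})=\sum_{t=0}^{c} d_t\,\schur_{(2^{c-t},1^{2t})}$, with $c$-stable coefficients. Consequently $A^{(k)}_t=d_t-d_{t-1}$, where $d_t$ (evaluated at $c=t$) counts semistandard fillings of the ribbon $a'_{(1^{2t})}$ with content $t^{2k}$, and $\elementaryE$-positivity becomes the monotonicity $d_{t-1}\le d_t$. The paper's route is shorter and self-contained: it needs neither Theorem~\ref{thm:emuSchurExpansion} nor the dual Jacobi--Trudi identity.
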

\begin{proof}
    For $\rowMap_k(\elementaryE_c^{2k})$, only monomials whose exponents are all divisible by $k$ survive, and then all exponents are divided by $k$. Hence every exponent in $\rowMap_k(\elementaryE_c^{2k})$ lies in $\{0,1,2\}$. As $\rowMap_k(\elementaryE_c^{2k})$ is homogeneous of degree $2c$, its monomial expansion is supported only on partitions of the form
    \[
    2^{c-d}1^{2d},\quad(0\le d\le c).
    \]
Thus, we can set
\[
\rowMap_k(\elementaryE_c^{2k})=\sum_{d=0}^{c} N_d^{(k,c)}\monomial_{2^{c-d}1^{2d}}.
\]
Fix $d$. The coefficient of $\monomial_{2^{c-d}1^{2d}}$ in $\rowMap_k(\elementaryE_c^{2k})$ is the coefficient of
\[
x_1^{2k}\cdots x_{c-d}^{2k}x_{c-d+1}^{k}\cdots x_{c+d}^{k}
\]
in $\elementaryE_c^{2k}$.
Hence
\[
N_d^{(k,c)}=[x_1^k\cdots x_{2d}^k]\elementaryE_d^{2k},
\]
so in particular it is independent of $c$. We can write $\defin{N_d^{(k)}} \coloneqq N_d^{(k,c)}.$
Note that, for $0\le t\le c$,
\[
\elementaryE_{(c+t,c-t)}
=\sum_{d=t}^{c}\binom{2d}{d-t}m_{2^{c-d}1^{2d}},
\]
It follows that there exist unique coefficients $A_t^{(k)}$ such that
\[
\rowMap_k(\elementaryE_c^{2k})=\sum_{t=0}^{c} A_t^{(k)}\elementaryE_{(c+t,c-t)}.
\]
Since the monomial coefficients $N_d^{(k)}$ are independent of $c$, so are the coefficients $A_t^{(k)}$.

Finally,\[
\rowMap_k(\elementaryE_{c+1}^{2k})=\sum_{t=0}^{c+1} A_t^{(k)}\elementaryE_{(c+1+t,c+1-t)},
\]
so passing from $\rowMap_k(\elementaryE_c^{2k})$ to $\rowMap_k(\elementaryE_{c+1}^{2k})$ simply shifts all existing terms, with only the new top term $\elementaryE_{2c+2}$ appearing.
\end{proof}
\begin{corollary}
    Let $k\ge 1$, and for each $c\ge 0$, the symmetric function $\rowMap_k(\elementaryE_c^{2k})$ is $\elementaryE$-positive with 
non-negative integer coefficients.
\end{corollary}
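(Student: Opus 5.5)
The plan is to reduce the statement to the single-variable coefficients $A_t^{(k)}$ of Lemma~\ref{lemma:two-row-support} and then show $A_t^{(k)}\in\mathbb{Z}_{\ge 0}$ for every $t$. Since those coefficients do not depend on $c$, it suffices to control them once and for all.

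\emph{Integrality.} The proof of Lemma~\ref{lemma:two-row-support} gives the triangular system
\[
N_d^{(k)}=\sum_{t=0}^{d}\binom{2d}{d-t}A_t^{(k)},\qquad N_d^{(k)}=[x_1^k\cdots x_{2d}^k]\,\elementaryE_d^{2k}=|\BCM(d^{2k},k^{2d})|.
\]
The diagonal entries are $\binom{2d}{d-d}=1$, so the system is unitriangular. Since every $N_d^{(k)}$ is an integer, induction on $t$ shows that every $A_t^{(k)}$ is an integer.

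\emph{Nonnegativity via generating functions.} Let $C(z)$ be the Catalan generating function and put $u=zC(z)^2$, so that $z=u/(1+u)^2$. The standard identity
\[
\sum_{d\ge 0}\binom{2d}{d-t}z^d=\frac{u^t}{\sqrt{1-4z}}
\]
turns the system into a single equation. Using $\sqrt{1-4z}=(1-u)/(1+u)$, we get
\[
\sum_{t\ge 0}A_t^{(k)}u^t=\frac{1-u}{1+u}\sum_{d\ge 0}N_d^{(k)}\Bigl(\frac{u}{(1+u)^2}\Bigr)^d .
\]
As a sanity check, take $k=1$. Then $N_d=\binom{2d}{d}$, and the right-hand side collapses to $1$, matching $\rowMap_1(\elementaryE_c^2)=\elementaryE_{(c,c)}$.

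For general $k$, the next step is a usable closed form for $\sum_d N_d^{(k)}z^d$. Here $N_d^{(k)}$ counts $0/1$ matrices with $2k$ rows of sum $d$ and $2d$ columns of sum $k$. Equivalently, $N_d^{(k)}$ is the constant term of $\prod_{i=1}^{2k}\elementaryE_d$-type expressions. I would try to write it as a constant term or contour integral in which the substitution $z=u/(1+u)^2$ rationalizes. I would then expand in $u$ and aim for a manifestly positive sum.

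\emph{Combinatorial route.} As a parallel attempt, I would look for a decomposition
\[
\BCM(d^{2k},k^{2d})\;\cong\;\bigsqcup_{t=0}^{d}\mathcal{A}_t\times\binom{[2d]}{d-t},
\]
with $|\mathcal{A}_t|=A_t^{(k)}$. This mirrors the fact that $\binom{2d}{d-t}$ is the coefficient of $\monomial_{1^{2d}}$ in $\elementaryE_{(d+t,d-t)}$, i.e.\ a choice of $(d+t)$- and $(d-t)$-subsets of the columns. A natural candidate is to read each column as a step ($\pm$ according to the entry in a distinguished row) and apply a cycle-lemma or reflection-principle argument. This would split off a ``height'' $t$ and a subset, leaving a residual object counted by $A_t^{(k)}$.

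\emph{Main obstacle.} The hardest step is this positivity. The inversion $A_t=\sum_d(-1)^{d-t}(\cdots)N_d$ alternates in sign, so nonnegativity has to come either from the explicit $u$-expansion above or from a sign-reversing involution on the alternating sum. I would first compute $A_t^{(2)}$ and $A_t^{(3)}$ for small $t$ from the displayed formula, guess a combinatorial model for $\mathcal{A}_t$, and then build the involution guided by that model.
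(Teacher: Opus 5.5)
Your reduction to the $c$-independent coefficients $A_t^{(k)}$ is correct, as are your integrality argument (the unitriangular system with integer $N_d^{(k)}=|\BCM(d^{2k},k^{2d})|$) and your Catalan generating-function identity. But the proposal stops exactly where the content of the corollary lies: you never prove that $A_t^{(k)}\ge 0$. The substitution $z=u/(1+u)^2$ only repackages the alternating binomial inversion. You give no usable closed form for $\sum_d N_d^{(k)}z^d$. And ``guess a model for $\mathcal{A}_t$, then build an involution'' is a plan, not an argument. As written, this is a genuine gap.

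The missing idea is to use the $c$-independence the other way round instead of inverting the triangular system. For $t\ge 1$, take $c=t$. In $\rowMap_k(\elementaryE_t^{2k})=\sum_{s=0}^{t}A_s^{(k)}\elementaryE_{(t+s,t-s)}$, every term with $s<t$ is indexed by a partition of length $2$. The only length-one term is $A_t^{(k)}\elementaryE_{(2t)}$. Hence $A_t^{(k)}$ equals the whole length-one slice $\sum_{\ell(\nu)=1}[\elementaryE_\nu]\rowMap_k(\elementaryE_\mu)$ for $\mu=(t^{2k})\vdash k\cdot 2t$.

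Theorem~\ref{thm:SumOfe-effic} with $r=1$ identifies this slice with the number of fixed-point block-tableaux in $\SSYT(\SH(0),(t^{2k}))$. Equivalently, by Proposition~\ref{prop:intervalGreedy}, it is the number of $T\in\SSYT(\SH(0),(t^{2k}))$ with $r_{\min}(T)=1$. So $A_t^{(k)}\ge 0$ for all $t\ge 1$, and $A_0^{(k)}=N_0^{(k)}=1$. Together with integrality, this gives the corollary, and it is how the paper argues.

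Note that the sign-reversing cancellation you hoped to construct already exists in the paper. It acts on the alternating sum coming from Stanley's functional $\stanleyMap$, not on your inversion of the $N_d^{(k)}$. It also supplies the positive combinatorial model for $\mathcal{A}_t$ you were looking for.
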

\begin{proof}
    For $\mu = 1^{nk}$, the positivity was established by Theorem~\ref{thm:Stanleyhpositive}. 
The result then follows immediately from Theorem~\ref{thm:SumOfe-effic} and Lemma~\ref{lemma:two-row-support}.
\end{proof}
\begin{remark}
    For $\mu = 1^{nk}$, positivity holds but no combinatorial
interpretation of the coefficients is known.
\end{remark}

\subsection{Stanley’s linear functional \texorpdfstring{$\stanleyMap$}{Theta} on quasisymmetric functions}
To further analyze $\rowMap_k(e_\mu)$, we use Stanley’s linear
functional on quasisymmetric functions~\cite{Stanley1995}, which provides a convenient way
to extract information from descent sets and relate it to ribbon Schur
functions.

\begin{definition}
For $k\geq 1$ and $0 \leq i < n$, we let \defin{$\SH(i)$} denote the ribbon skew shape  
where the number of boxes in the columns are $(k(i+1), k, k,\dotsc,k)$ and the total number 
of boxes is $kn$. In other words, $\SH(i)$ is the conjugate of 
the skew shape $(k\lambda+(k-1)\rho)/(k-1)\rho$ where $\lambda=(i+1,1^{n-i})$.
\emph{Note that we suppress the dependence on the parameters $n$ and $k$,
as these are clear from the context.}
\end{definition}

Let $F_{n,S}(\xvec)$ denote the fundamental quasisymmetric function of degree $n$ indexed by a subset
$S\subseteq\{1,2,\dots,n\}$. 
Recall the map $\stanleyMap:\QSym\to \mathbb{Z}[t]$ defined via
\begin{equation}
    \defin{\stanleyMap\left(F_{n+1,S}(\xvec)\right)} \coloneqq
\begin{cases}
t(t-1)^i, & \text{if } S=\{i+1,i+2,\dots,n\}\text{ for some }0\le i\le n,\\
0, & \text{otherwise}.
\end{cases}
\end{equation}
The map $\stanleyMap$ considered as a map on symmetric functions
is a ring homomorphism with the property that
\begin{equation}\label{eq:phiIne}
\stanleyMap(\elementaryE_\mu)=t^{\ell(\mu)}.
\end{equation}
Moreover, if $\lambda\vdash (n+1)$, then
\begin{equation}
    \stanleyMap\left(\schur_\lambda(\xvec)\right)=
\begin{cases}
t(t-1)^i & \text{if }\lambda=(i+1,1^{n-i})\text{ for some }0\le i\le n,\\
0 & \text{otherwise}.
\end{cases}
\end{equation}
Thus $\stanleyMap$ kills all Schur functions except those indexed by hook shapes $(i+1,1^{n-i})$, and on a hook it returns the monomial $t(t-1)^i$.

Equivalently, $\stanleyMap$ records only the length of $\mu$ when applied to $\elementaryE_\mu$.
 Consequently, for any symmetric function $F=\sum_\mu c_\mu \elementaryE_\mu$,
\begin{equation}
    \stanleyMap(F)=\sum_\mu c_\mu t^{\ell(\mu)}
=\sum_{r\ge 0}\left(\sum_{\ell(\mu)=r} c_\mu\right)t^r,
\end{equation}

so the coefficient $[t^r]\stanleyMap(F)$ equals the sum of the $\elementaryE$-coefficients of $F$ over 
all partitions of length $r$. 
This map has an important role for the chromatic symmetric functions, see \cite{Stanley1995}.

For $\mu\vdash kn$, we know the Schur expansion of $\rowMap_k(\elementaryE_{ \mu })$ from \eqref{eq:emuInSchur},
\begin{equation}
\rowMap_k(\elementaryE_{\mu }) = \sum_{\lambda \vdash n}
K_{a_{\lambda}',\mu}
\cdot \schur_\lambda,
\end{equation}
where $a_{\lambda}=(k\lambda+(k-1)\rho)/(k-1)\rho$.

Applying $\stanleyMap$ to this Schur expansion, we have
\begin{equation}\label{eq:phiInrowMap}
\stanleyMap\big(\rowMap_k(\elementaryE_\mu)\big)=\sum_{i=0}^{n-1} K_{\SH(i),\mu} \, t(t-1)^i.
\end{equation}
Applying the property in \eqref{eq:phiIne}, we have that 
\[
[t^r]\stanleyMap(\rowMap_k(\elementaryE_\mu))=\sum_{\substack{\nu\vdash n\ \\\ell(\nu)=r}} [\elementaryE_\nu]\rowMap_k(\elementaryE_\mu).
\]
Combining \eqref{eq:phiInrowMap} with the binomial expansion 
\[
t(t-1)^i=
\sum_{r=1}^{i+1} (-1)^{i-r+1}\binom{i}{r-1}t^r,
\]
we obtain an explicit alternating-binomial formula
\begin{equation}\label{eq:AlterFormula}
    \sum_{\substack{\nu\vdash n\\\ell(\nu)=r}} [\elementaryE_\nu]\rowMap_k(\elementaryE_\mu) 
    =\sum_{i=r-1}^{n-1}(-1)^{i-r+1}\binom{i}{r-1} K_{\SH(i),\mu}.
\end{equation}

\begin{definition}
Let $T \in \SSYT(\SH(i),\mu)$. 
Each column of $T$ is partitioned into consecutive vertical blocks of $k$ cells, called \defin{$k$-blocks}. 
A tableau obtained from $T$ by this construction is called a \defin{block-tableau}.

A block-tableau $P\in \SSYT(\SH(r-1),\mu)$ is called a \defin{fixed-point block-tableau} if 
no $k$-block in the upper-right region can be moved into one of the empty $k$-block positions 
of the first column so that the resulting filling is again semistandard.

For example, if $k=2$, 
\[
\begin{ytableau}
\none&\none&3\\
\none&2&6\\
1&4\\
5
\end{ytableau}
\]
is a fixed-point block-tableau; see also Figure~\ref{fig:blockTableau}.
\end{definition}

\begin{theorem}\label{thm:SumOfe-effic}
    Let $\mu\vdash kn$ and let $1\le r\le n$. Then
\[
\sum_{\substack{\nu\vdash n\\ \ell(\nu)=r}}
[\elementaryE_\nu]\rowMap_k(\elementaryE_\mu)
=
\Bigl|
\{P\in \SSYT(\SH(r-1),\mu): P \text{ is a fixed-point block-tableau}\}
\Bigr|.
\]
In particular,
\[
\sum_{\substack{\nu\vdash n\\ \ell(\nu)=r}}
[\elementaryE_\nu]\rowMap_k(\elementaryE_\mu)\ge 0.
\]
\end{theorem}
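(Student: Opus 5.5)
Starting from the alternating-binomial formula \eqref{eq:AlterFormula}, the left-hand side of Theorem~\ref{thm:SumOfe-effic} equals
\[
\sum_{i=r-1}^{n-1}(-1)^{i-r+1}\binom{i}{r-1} K_{\SH(i),\mu}.
\]
The plan is to read this as an inclusion--exclusion and cancel it with a sign-reversing involution whose fixed points are exactly the fixed-point block-tableaux in $\SSYT(\SH(r-1),\mu)$. The first step is to reinterpret $\binom{i}{r-1}K_{\SH(i),\mu}$ as the number of pairs $(T,S)$. Here $T\in\SSYT(\SH(i),\mu)$ is viewed as a block-tableau, so its first column consists of $i+1$ $k$-blocks and each further column is one $k$-block. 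The set $S$ is a choice of $r-1$ of the $i$ lower blocks of the first column; equivalently, it is the choice of which $i-r+1$ of those blocks are ``marked''. The sign of such a pair is $(-1)^{\#\text{marked}}$.

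There is a natural bijection underlying the involution. A marked block in the first column can be cut out and reinserted as a new single column of the ribbon, at the position dictated by the ribbon's reading order. Conversely, a single-column block can be pushed back into the first column. The shapes $\SH(i)$ and $\SH(i-1)$ differ by exactly one such transfer, since both have $kn$ boxes and single columns of height $k$. So the sum can be rewritten over configurations with a fixed underlying ``reduced'' tableau of shape $\SH(r-1)$: the unmarked blocks stay in column one, and the marked blocks are distributed among the upper-right columns.

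Equivalently, I would run the cancellation from the $\SH(r-1)$ side. For each $P\in\SSYT(\SH(r-1),\mu)$, consider the set of upper-right $k$-blocks that can legally be moved into empty block positions of the first column while keeping semistandardness. Each valid subset of moves yields a term of $K_{\SH(i),\mu}$ with the corresponding sign and binomial multiplicity. If the movable set is nonempty, an involution toggling the first movable block (say the leftmost one, in reading order) pairs off terms of opposite sign. What survives is exactly the $P$ with no movable block, which are the fixed-point block-tableaux. This gives the equality; nonnegativity is then immediate.

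The main obstacle is well-definedness of the toggle. Moving one block must not change which other blocks are movable; otherwise the involution fails to be an involution. I also need every tableau of shape $\SH(i)$ counted with multiplicity $\binom{i}{r-1}$ to arise exactly once, as $P$ together with a set of moved blocks. I would first try to prove a ``column-monotonicity'' lemma. It should say that, since the first column is strictly increasing and the ribbon rows impose only weak inequalities between adjacent blocks, the legal insertion position of a block in column one is determined by its bottom entry and its top entry. This would make moves of distinct blocks commute and reduce the claim to a sorting argument on bottom entries. If that fails, I would fall back to a Gessel--Viennot/ribbon-Jacobi--Trudi style argument: write the alternating sum as a determinant of $\completeH$-type counts of length-$k$ strict blocks, and identify the survivors via a lattice-path involution.
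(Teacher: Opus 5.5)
\paragraph{The gap: moved blocks have no unique slot.}
Your setup follows the paper's Boolean-class argument. You read $\binom{i}{r-1}K_{\SH(i),\mu}$ as pairs $(T,S)$, move the marked first-column blocks into the upper-right chain, and cancel by toggling movable blocks. The step you flag as the main obstacle is where this breaks, and it fails rather than being merely unproved.

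Your monotonicity idea does work in the first column: a block's slot there is forced by its entries. The trouble is the upper-right chain, where the slot of a reinserted block is not determined. Take $k=2$, $n=3$, $\mu=1^6$. List the columns of a filling of $\SH(0)$ from left to right as (top entry, bottom entry); the only cross-column condition is that each top is at most the bottom of the next column.
\begin{itemize}
\item Both $(1,2),(4,6),(3,5)$ and $(1,2),(3,5),(4,6)$ are valid fillings.
\item In either one, moving $(4,6)$ under $(1,2)$ gives the same $T\in\SSYT(\SH(1),1^6)$: first column $1,2,4,6$, second column $3,5$.
\item In each of them, $(3,5)$ and $(4,6)$ are individually movable but not jointly, since they interleave.
\end{itemize}
So the configurations ``$P$ plus a set of moved blocks'' do not biject with the pairs $(T,S)$, and they do not form Boolean intervals. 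Consequently, toggling the leftmost movable block is not even well defined.

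\paragraph{The claimed survivors give the wrong count.}
This cannot be repaired while keeping your description of the survivors, because that description yields the wrong number. Here $\rowMap_2(\elementaryE_{1^6})=48\elementaryE_3+12\elementaryE_{21}+\elementaryE_{111}$, so for $r=1$ the left-hand side is $48=61-14+1$.

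Write a standard filling of $\SH(0)$ as columns $(w_2,w_1),(w_4,w_3),(w_6,w_5)$ with $w_1>w_2<w_3>w_4<w_5>w_6$. A block is movable exactly when $w_4>w_1$ or $w_6>w_1$. Only $44$ of the $61$ fillings have no movable block.

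For example, $(2,3),(1,6),(4,5)$ has the movable block $(4,5)$. Moving it gives $T$ with first column $2,3,4,5$ and second column $1,6$. This $T$ has two admissible preimages, $(2,3),(4,5),(1,6)$ and $(2,3),(1,6),(4,5)$, but it can cancel only one of them.

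Any valid cancellation must therefore single out a canonical reinsertion slot; the paper uses the leftmost admissible one. The survivors then depend on that choice. With the leftmost rule, the survivors here are exactly the $48$ fillings with $r_{\min}=1$ from Proposition~\ref{prop:intervalGreedy}. These include $(2,3),(1,6),(4,5)$ and three other fillings that have a movable block, so they are not the fixed-point block-tableaux as defined.

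The paper's own proof asserts the independence of movable blocks ``by construction'' and identifies the survivors with fixed-point block-tableaux, so it has the same gap. You cannot close it by following that argument. The right-hand side of the statement has to be redefined before an involution of this kind can prove it.
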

\begin{proof}
From \eqref{eq:AlterFormula}, we have
\[
\sum_{\substack{\nu\vdash n\\ \ell(\nu)=r}}
[\elementaryE_\nu]\rowMap_k(\elementaryE_\mu)
=
\sum_{i=r-1}^{n-1}
(-1)^{i-r+1}\binom{i}{r-1}K_{\SH(i),\mu}.
\]
Equivalently,
\[
\sum_{\substack{\nu\vdash n\\ \ell(\nu)=r}}
[\elementaryE_\nu]\rowMap_k(\elementaryE_\mu)
=
\sum_{(i,T,S)\in\mathcal U}\sgn(i,T,S),
\]
where
\[
\mathcal U
=
\bigsqcup_{i=r-1}^{n-1}
\Bigl(
\SSYT(\SH(i),\mu)\times \binom{[i]}{r-1}
\Bigr),
\qquad
\sgn(i,T,S)=(-1)^{i-r+1}.
\]
For a fixed $i$, the first column of the shape $\SH(i)$ 
consists of $p=i+1$ vertical $k$-blocks. 
Label these blocks from top to bottom by
\[
B_0,B_1,\dots,B_{p-1}.
\]
Given $(i,T,S)\in\mathcal U$, we always keep the top block $B_0$ in the 
first column, and among the remaining $p-1$ blocks we keep exactly the $r-1$ 
blocks indexed by $S$. Thus the first column contains exactly $r$ blocks. 
Every unchosen block is removed from the first column and reinserted into the 
upper-right region according to the following procedure.

We choose the smallest column index $j$ for which the block can be inserted between 
columns $j$ and $j+1$ so that the resulting filling remains semistandard; 
if no such column index exists, then we place the block in the last column. 
By construction, the resulting filling is semistandard, and we mark each such moved block in blue.
See Figure~\ref{fig:blockTableau} for an illustration of this construction.

\begin{figure}[!ht]
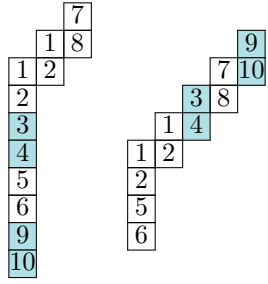

\centering
\ytableausetup{boxsize=1em}
\begin{ytableau}
\none & \none & 7 \\
\none &1 &8 \\
1 & 2  \\
2 \\
*(pastelblue)3\\
*(pastelblue)4\\
5\\
6\\
*(pastelblue)9\\
*(pastelblue)10
\end{ytableau}
\quad
\begin{ytableau}
\none & \none &\none & \none &*(pastelblue)9\\
\none & \none &\none & 7&*(pastelblue)10\\
\none & \none &*(pastelblue)3&8\\
\none & 1&*(pastelblue)4\\
1&2\\
2\\
5\\
6
\end{ytableau}
\caption{
Left: a tableau $T$ of shape $\SH(3)$ with its column consisting of vertical $2$-blocks, 
unchosen blocks are colored blue. 
Right: the corresponding block-tableau $P$. 
The first column contains $r=2$ vertical $2$-blocks, while the remaining blocks 
are moved to the upper-right region. 
}
\label{fig:blockTableau}
\end{figure}

Let $\widetilde{\mathcal U}$ denote the set of all such block-tableaux. 
Since the number of blue blocks is exactly
\[
p-r=(i+1)-r=i-r+1,
\]
we obtain
\[
\sgn(i,T,S)=(-1)^{\#\blue(P)}.
\]
Hence
\[
\sum_{\substack{\nu\vdash n\\ \ell(\nu)=r}}
[\elementaryE_\nu]\rowMap_k(\elementaryE_\mu)
=
\sum_{P\in\widetilde{\mathcal U}}(-1)^{\#\blue(P)}.
\]
These Boolean posets partition $\widetilde{\mathcal U}$: two block-tableaux lie in the 
same class precisely when they differ by moving some subset of mutually 
independent movable blue blocks back to the first column. 
In particular, every $P\in\widetilde{\mathcal U}$ belongs to exactly one such Boolean class.
For $P\in\widetilde{\mathcal U}$, let $M(P)$ be the set of blue $k$-blocks 
in the upper-right region that can be moved back into their corresponding 
empty positions in the first column while preserving semistandardness.
 By construction of the hook block-shapes, these movable blue blocks are independent: 
any subset of them may be moved back to the first column, 
and the final result does not depend on the order in which the moves are performed.

Fix a maximal configuration $Q\in\widetilde{\mathcal U}$, meaning that each block in $M(Q)$ is still 
placed in the upper-right region and colored blue. For every subset $A\subseteq M(Q)$, let $Q_A$ be 
the block-tableau obtained from $Q$ by moving exactly the blocks of $A$ back into the first column. 
Then
\[
\mathcal B(Q) \coloneqq \{Q_A: A\subseteq M(Q)\}
\]
is naturally a Boolean poset isomorphic to $2^{M(Q)}$, ordered by inclusion of subsets. 
If $|M(Q)|=m$, then every element of $\mathcal B(Q)$ has the same blue blocks outside $M(Q)$, 
while each block in $M(Q)$ contributes one sign change when moved. 
Therefore
\[
\sum_{P\in\mathcal B(Q)}(-1)^{\#\blue(P)}
=
(-1)^c\sum_{j=0}^{m}\binom{m}{j}(-1)^j
=
(-1)^c(1-1)^m,
\]
where $c$ is the number of blue blocks not belonging to $M(Q)$. Thus
\[
\sum_{P\in\mathcal B(Q)}(-1)^{\#\blue(P)}=0,
\quad\text{whenever }m>0.
\]
It follows that every Boolean class of positive rank contributes $0$ to the total signed sum. 
Hence the only surviving contributions come from the rank-zero classes, 
namely those block-tableaux $P\in\widetilde{\mathcal U}$ for which
\[
M(P)=\varnothing.
\]
Equivalently, these are precisely the block-tableaux for which no upper-right block can 
be moved back into the first column. Consequently,
\[
\sum_{\substack{\nu\vdash n\\ \ell(\nu)=r}}
[\elementaryE_\nu]\rowMap_k(\elementaryE_\mu)
=
|\{P\in\widetilde{\mathcal U}: M(P)=\varnothing\}|
\ge 0.
\]
This is exactly the number of fixed-point block-tableaux in $\SSYT(\SH(r-1),\mu)$.
\end{proof}
\begin{example}
We consider $n=3,k=2$. 
For $\sum_{\substack{\nu\vdash n\\\ell(\nu)=r}} [\elementaryE_\nu]\rowMap_2(\elementaryE_\mu)$, 
we need to consider the following skew shapes:
\[
\SH(0):\; \ydiagram{ 2+1, 1+2, 2, 1}
\qquad 
\SH(1):\; \ydiagram{1+1, 2,1,1,1}
\qquad 
\SH(2):\; \ydiagram{1, 1, 1, 1, 1,1}.
\]
Let $\mu=(1^6)$. Then
\[
\sum_{\substack{\nu\vdash 3\\\ell(\nu)=1}} [\elementaryE_\nu]\rowMap_2(\elementaryE_\mu)=A_0-A_1+A_2;\quad
\sum_{\substack{\nu\vdash 3\\\ell(\nu)=2}} [\elementaryE_\nu]\rowMap_2(\elementaryE_\mu)=A_1-2A_2;\quad
\sum_{\substack{\nu\vdash 3\\\ell(\nu)=3}} [\elementaryE_\nu]\rowMap_2(\elementaryE_\mu)=A_2,
\]
where $A_i=K_{\SH(i),\mu}$. To illustrate the Boolean cancellation
for the sum of $\elementaryE$-coefficients with $\ell(\nu)=1$,
Figure~\ref{fig:booleanPosetExample} shows the Boolean poset of rank $2$
generated by the block-tableau $P$.

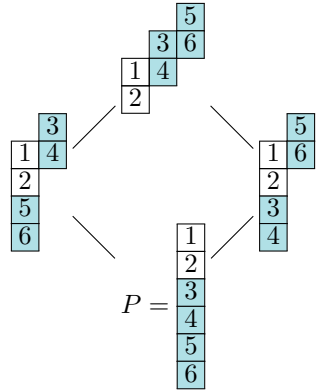
\begin{figure}[h]
\centering
\begin{tikzpicture}[scale=1.1, every node/.style={inner sep=2pt}]
\node (top) at (0,1.5) {$\begin{ytableau}
\none&\none&*(pastelblue)5\\
\none&*(pastelblue)3&*(pastelblue)6\\
1&*(pastelblue)4\\
2
\end{ytableau}$};
\node (left) at (-1.5,0) {$\begin{ytableau}
\none&*(pastelblue)3\\
1&*(pastelblue)4\\
2\\
*(pastelblue)5\\
*(pastelblue)6
\end{ytableau}$};
\node (right) at (1.5,0) {$\begin{ytableau}
\none&*(pastelblue)5\\
1&*(pastelblue)6\\
2\\
*(pastelblue)3\\
*(pastelblue)4
\end{ytableau}$};
\node (bot) at (0,-1.5) {$P=\begin{ytableau}
1\\
2\\
*(pastelblue)3\\
*(pastelblue)4\\
*(pastelblue)5\\
*(pastelblue)6
\end{ytableau}$};

\draw (bot)--(left)--(top)--(right)--(bot);
\end{tikzpicture}
\caption{The Boolean poset of rank $2$ generated by the block-tableau $P$.}
\label{fig:booleanPosetExample}
\end{figure}

One can check that
\[
\sum_{\substack{\nu\vdash 3\\\ell(\nu)=1}} [\elementaryE_\nu]\rowMap_2(\elementaryE_{(1^6)})=48.
\]
This is the number of fixed-point block-tableaux in $\SSYT(\SH(0),(1^6))$.
\end{example}

\begin{example}
Let $\mu=(2,1,1,1,1,1,1)$, $k=2$, and $r=2$. Then
\[
\rowMap_2(\elementaryE_\mu)
=
6\elementaryE_{(2,1,1)}
+8\elementaryE_{(2,2)}
+104\elementaryE_{(3,1)}
+544\elementaryE_{(4)}.
\]
Hence
\[
\sum_{\substack{\nu\vdash 4\\ \ell(\nu)=2}}
[\elementaryE_\nu]\rowMap_2(\elementaryE_\mu)
=
8+104=112.
\]
On the other hand,
\[
K_{\SH(1),\mu}=124,
\qquad
K_{\SH(2),\mu}=6.
\]
Thus there are $124-112=12$ non-fixed block-tableaux in $\SSYT(\SH(1),\mu)$. Equivalently, there are $12$ Boolean classes of rank $1$. Figure~\ref{fig:mu2111111-r2} shows the corresponding $12$ blue block-tableaux in $\widetilde{\mathcal U}$ coming from $\SH(2)$.
\end{example}

\begin{figure}[!htbp]
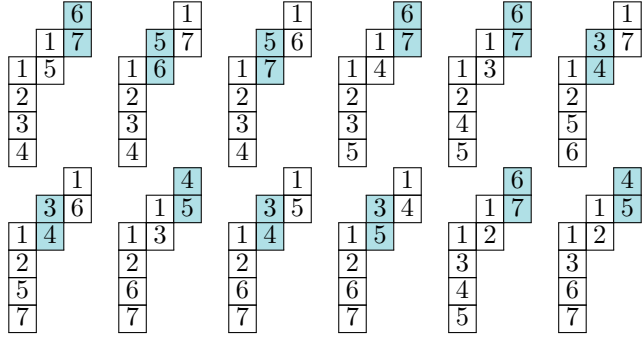

\centering
\ytableausetup{boxsize=1.0em}
$
\begin{array}{cccccc}
\ytableaushort{{\none}{\none}{*(pastelblue)6},{\none}1{*(pastelblue)7},15{\none},2{\none}{\none},3{\none}{\none},4{\none}{\none}}
&
\ytableaushort{{\none}{\none}1,{\none}{*(pastelblue)5}7,1{*(pastelblue)6}{\none},2{\none}{\none},3{\none}{\none},4{\none}{\none}}
&
\ytableaushort{{\none}{\none}1,{\none}{*(pastelblue)5}6,1{*(pastelblue)7}{\none},2{\none}{\none},3{\none}{\none},4{\none}{\none}}
&
\ytableaushort{{\none}{\none}{*(pastelblue)6},{\none}1{*(pastelblue)7},14{\none},2{\none}{\none},3{\none}{\none},5{\none}{\none}}
&
\ytableaushort{{\none}{\none}{*(pastelblue)6},{\none}1{*(pastelblue)7},13{\none},2{\none}{\none},4{\none}{\none},5{\none}{\none}}
&
\ytableaushort{{\none}{\none}1,{\none}{*(pastelblue)3}7,1{*(pastelblue)4}{\none},2{\none}{\none},5{\none}{\none},6{\none}{\none}}
\\[0.9em]
\ytableaushort{{\none}{\none}1,{\none}{*(pastelblue)3}6,1{*(pastelblue)4}{\none},2{\none}{\none},5{\none}{\none},7{\none}{\none}}
&
\ytableaushort{{\none}{\none}{*(pastelblue)4},{\none}1{*(pastelblue)5},13{\none},2{\none}{\none},6{\none}{\none},7{\none}{\none}}
&
\ytableaushort{{\none}{\none}1,{\none}{*(pastelblue)3}5,1{*(pastelblue)4}{\none},2{\none}{\none},6{\none}{\none},7{\none}{\none}}
&
\ytableaushort{{\none}{\none}1,{\none}{*(pastelblue)3}4,1{*(pastelblue)5}{\none},2{\none}{\none},6{\none}{\none},7{\none}{\none}}
&
\ytableaushort{{\none}{\none}{*(pastelblue)6},{\none}1{*(pastelblue)7},12{\none},3{\none}{\none},4{\none}{\none},5{\none}{\none}}
&
\ytableaushort{{\none}{\none}{*(pastelblue)4},{\none}1{*(pastelblue)5},12{\none},3{\none}{\none},6{\none}{\none},7{\none}{\none}}
\end{array}
$
\caption{For $\mu=(2,1,1,1,1,1,1)$, $k=2$, and $r=2$, the $12$ blue block-tableaux in $\widetilde{\mathcal U}$ corresponding to the $12$ Boolean classes of rank $1$. In each tableau, the blue $2$-block can be moved back into the first column, yielding a non-fixed block-tableau in $\SSYT(\SH(1),\mu)$.}
\label{fig:mu2111111-r2}
\end{figure}

\subsection{Back to permutations}

Theorem~\ref{thm:SumOfe-effic} gives a positive model for the length-$r$
slices of $\rowMap_k(\elementaryE_\mu)$ in terms of fixed-point block-tableaux
of shape $\SH(r-1)$. The same numbers can be
read directly from $\SH(0)$ by a statistic depending only on 
the extreme entries of each $k$-block. 
Recall that the set $\SH(0)$ is in natural bijection with multiset 
permutations of $1^{\mu_1} \dotsc \ell^{\mu_\ell}$ where all multiples of $k$ are the weak ascents.

Let $T\in \SSYT(\SH(0),\mu)$ have columns
\[
C_j=
\begin{pmatrix}
c_{1,j}\\
\vdots\\
c_{k,j}
\end{pmatrix},
\qquad c_{1,j}<c_{2,j}<\dotsb<c_{k,j},
\qquad 1\le j\le n.
\]
To each column we associate the \defin{extreme interval}
\[
I_j\coloneqq [c_{1,j},c_{k,j}].
\]
We then decompose the sequence $I_1,I_2,\dotsc,I_n$ greedily from left to right.
Start with the first interval and let $[L,R]=I_1$. If the next interval $I_j$
meets the current span, namely if
\[
I_j\cap [L,R]\neq \varnothing,
\]
then we keep $I_j$ in the current segment and replace $[L,R]$ by the smallest
interval containing both $I_j$ and $[L,R]$. If instead $I_j\cap [L,R]=\varnothing$,
then we cut before $I_j$ and start a new segment with $[L,R]=I_j$. The number
of segments obtained in this way is denoted by $r_{\min}(T)$. Equivalently,
$r_{\min}(T)$ is the smallest number of consecutive blocks into which
$I_1,\dotsc,I_n$ can be cut so that within each block every new interval meets
the span of the earlier ones.

\begin{proposition}\label{prop:intervalGreedy}
Let $\mu\vdash kn$. Then
\[
\sum_{\nu\vdash n} [\elementaryE_\nu]\rowMap_k(\elementaryE_\mu)\, t^{\ell(\nu)}
=
\sum_{T\in \SSYT(\SH(0),\mu)} t^{r_{\min}(T)}.
\]
Equivalently, for every $1\le r\le n$,
\[
\sum_{\substack{\nu\vdash n\\ \ell(\nu)=r}}
[\elementaryE_\nu]\rowMap_k(\elementaryE_\mu)
=
\bigl|\{T\in \SSYT(\SH(0),\mu): r_{\min}(T)=r\}\bigr|.
\]
\end{proposition}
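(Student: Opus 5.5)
The plan is to reduce the statement to the alternating formula \eqref{eq:AlterFormula} and then realize the alternating sum as an inclusion--exclusion over ``cuts'' in a word read from $\SH(0)$. By \eqref{eq:phiInrowMap} and \eqref{eq:phiIne}, the left-hand side equals
\[
\stanleyMap\bigl(\rowMap_k(\elementaryE_\mu)\bigr)=\sum_{i=0}^{n-1}K_{\SH(i),\mu}\,t(t-1)^i .
\]
So it suffices to show
\[
\sum_{i}K_{\SH(i),\mu}\,t(t-1)^i=\sum_{T\in\SSYT(\SH(0),\mu)}t^{r_{\min}(T)}.
\]
Substituting $t=1+s$ turns this into the identity
\[
\sum_i K_{\SH(i),\mu}(1+s)s^i=\sum_T(1+s)^{r_{\min}(T)}.
\]
Equivalently, for each $j\ge 0$, the number of pairs $(T,A)$ with $T\in\SSYT(\SH(0),\mu)$ and $A$ a $j$-subset of the $r_{\min}(T)-1$ cut positions of $T$ must equal $K_{\SH(j),\mu}+K_{\SH(j-1),\mu}$.

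The key step is a combinatorial model for $K_{\SH(i),\mu}$ in terms of the column sequence $C_1,\dots,C_n$. A tableau of shape $\SH(i)$ consists of a tall first column of $k(i+1)$ strictly increasing cells, followed by $n-i-1$ columns of height $k$ with the usual ribbon row condition. I would cut the tall column into $i+1$ consecutive $k$-blocks $C_1,\dots,C_{i+1}$. Then $\SSYT(\SH(i),\mu)$ corresponds to sequences of $k$-sets $C_1,\dots,C_n$ such that:
\begin{enumerate}
\item the ribbon condition $\max C_j\le\min C_{j+1}$ from the $\SH(0)$ reading holds at the last $n-i-1$ boundaries (the weak ascent condition of part (c) of the earlier proposition);
\item $\max C_j<\min C_{j+1}$ holds at the first $i$ boundaries.
\end{enumerate}
The main point is to compare the strict boundary condition with the condition ``$I_j$ is disjoint from the span of the earlier intervals''.

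Here I would reorganize. Rather than directly counting pairs, I would apply a sign-reversing involution in the spirit of Theorem~\ref{thm:SumOfe-effic}, or use the greedy decomposition to match pairs $(T,A)$. Given $T$ in $\SH(0)$ and a chosen set $A$ of greedy cuts, I would permute segments so that the chosen cut segments are moved and stacked into the first column. At a greedy cut the new interval lies entirely above or entirely below the current span. Intervals entirely below can be reordered in front, yielding a strictly increasing concatenation. This is the mechanism that should produce $\SH(j)$ and $\SH(j-1)$ tableaux, with the extra term accounting for whether the top block $B_0$ belongs to a moved segment.

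The main obstacle is making this bijection precise and invertible. In particular, the greedy rule merges via the span $[L,R]$ and not via consecutive intervals only, so disjointness from the span must be shown to be exactly what allows a segment to be slid into the first column while keeping semistandardness. This is also where the definition of fixed-point block-tableaux should match: I expect to prove that $r_{\min}(T)=r$ exactly when the block insertion procedure of Theorem~\ref{thm:SumOfe-effic}, applied in reverse, sends $T$ to a fixed-point block-tableau in $\SSYT(\SH(r-1),\mu)$. Combined with Theorem~\ref{thm:SumOfe-effic}, that would give the second form of the statement directly. I would attempt this second route first: define the map from $\SH(0)$ by gathering every segment that starts at a greedy cut into the first column, and check that it is a bijection onto fixed-point block-tableaux. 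I would verify it against the $n=3$, $\mu=1^6$ count of $48$.
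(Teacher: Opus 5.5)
Your final plan (invoke Theorem~\ref{thm:SumOfe-effic} and biject $\{T: r_{\min}(T)=r\}$ onto fixed-point block-tableaux of shape $\SH(r-1)$) is the paper's route. However, you never construct the bijection, which is the entire content, and the model you build around it has errors that would derail it.

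First, the row condition between consecutive height-$k$ columns is $\min C_j\le\max C_{j+1}$, not $\max C_j\le\min C_{j+1}$. The top cell of column $j$ sits left of the bottom cell of column $j+1$. In part (c) of the proposition on $E_{k,\mu}$, columns are read bottom-to-top, so $w_{jk}=\min C_j$ and $w_{jk+1}=\max C_{j+1}$. With your inequality, the set you describe for $\mu=1^6$, $k=2$, $n=3$ has $1$ element, whereas $\SSYT(\SH(0),1^6)$ has $61$.

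Second, in $\SH(i)$ it is the \emph{top} block of the tall column that touches column $2$. A consistent left-to-right linearization therefore lists the tall column bottom-to-top as $D_1,\dots,D_{i+1}$. The column condition then reads $\min D_p>\max D_{p+1}$ for $p\le i$, which is the exact negation of the ribbon condition, not a strict version of it.

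Third, your coefficient extraction is off by one. The quantity $[s^j]\sum_T(1+s)^{r_{\min}(T)}$ counts $j$-subsets of an $r_{\min}(T)$-set. Cancelling the common factor $1+s$ gives the cleaner target $K_{\SH(j),\mu}=\sum_T\binom{r_{\min}(T)-1}{j}$.

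Fourth, whole segments cannot be gathered into the first column, since consecutive blocks in a segment have overlapping intervals. Only the first block of each segment goes there, with the rest staying in order, as in the paper.

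The missing idea, which also settles your worry about spans, is the following. For $T\in\SSYT(\SH(0),\mu)$, the interval $I_{p+1}$ can never miss the current span $[L,R]$ from below, since that would give $\max C_{p+1}<L\le\min C_p$. Hence a greedy cut occurs before $C_{p+1}$ exactly when $\min C_{p+1}>\max(C_1\cup\dots\cup C_p)$. So cut blocks are record blocks, and there is nothing ``below'' to reorder.

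With this, your first route closes by itself.
\begin{itemize}
\item Forward map: given $T$ and a $j$-subset $A$ of its cuts, remove the blocks $C_{p+1}$ for $p\in A$ and stack them under $C_1$ in their original order. Keep the other surviving blocks, in order, as columns $2,3,\dots$.
\item The tall column is strictly increasing because each removed block is a record block.
\item Row conditions survive: if $C_p$ and $C_q$ with $q>p+1$ become adjacent, then $\max C_q\ge\min C_{q-1}>\max C_p\ge\min C_p$, since $C_{q-1}$ is a removed record block.
\item Inverse: from $P\in\SSYT(\SH(j),\mu)$, keep the top block of the tall column followed by the other columns. Reinsert the $j$ lower blocks, top one first, each immediately after the longest prefix whose entries are all smaller than its minimum.
\item Because prefix maxima are weakly increasing, this is the only position that makes the reinserted block a cut without creating a row violation after it.
\end{itemize}
This proves $K_{\SH(j),\mu}=\sum_T\binom{r_{\min}(T)-1}{j}$. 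Summing against $t(t-1)^j$ in \eqref{eq:phiInrowMap} then yields the proposition.

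For $j=r-1$ this is the paper's map $\phi$. Your repaired argument, however, needs neither Theorem~\ref{thm:SumOfe-effic} nor a characterization of fixed-point block-tableaux.
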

\begin{proof}
By Theorem~\ref{thm:SumOfe-effic}, it suffices to construct a bijection
\[
\phi: \{T\in \SSYT(\SH(0),\mu): r_{\min}(T)=r\}
\longrightarrow
\{P\in \SSYT(\SH(r-1),\mu): P \text{ is fixed-point}\}.
\]
Let 
\[ 
T=(C_1,C_2,\dots,C_n)\in \SSYT(\SH(0),\mu), \qquad C_j= \begin{pmatrix} c_{1,j}\\ \vdots\\ c_{k,j} \end{pmatrix}, 
\] 
and let $I_j=[c_{1,j},c_{k,j}]$ be the corresponding extreme intervals. Assume that the greedy decomposition of $I_1,\dots,I_n$ has exactly $r$ segments:
\[ (C_1,\dots,C_n)=G_1\,|\,G_2\,|\,\cdots\,|\,G_r, \] where 
\[ 
G_s=(C_{a_s},C_{a_s+1},\dots,C_{a_{s+1}-1}), \qquad 1=a_1<a_2<\cdots<a_r\le n, 
\] 
and we set $a_{r+1}=n+1$. Thus, by definition of the greedy decomposition, for each $s$, if $a_s\le j<a_{s+1}$, then $I_j$ meets the span of the preceding intervals in the same segment, whereas $I_{a_{s+1}}$ is disjoint from the span of $G_s$ when $s<r$.

We regard each column $C_j$ as one vertical $k$-block. For each segment $G_s$ we keep its first block $C_{a_s}$ and place these $r$ distinguished blocks into the first column of $\SH(r-1)$ , from top to bottom, in the order 
\[ 
C_{a_1},\,C_{a_2},\,\dots,\,C_{a_r}. 
\]
All remaining blocks 
\[ 
C_j \;(j\notin\{a_1,\dots,a_r\})
\] 
are placed in the upper-right region of $\SH(r-1)$ in the same left-to-right order as they appear in $T$, namely
\[ 
C_{a_1+1},\dots,C_{a_2-1},\; C_{a_2+1},\dots,C_{a_3-1},\; \dots,\; C_{a_r+1},\dots,C_n. 
\] 
Denote the resulting filling by $\phi(T)$. 

We claim $\phi(T)\in \SSYT(\SH(r-1),\mu)$. Indeed, within each segment $G_s$, every block intersects the span of the preceding ones, so successive insertions produce valid row comparisons. Between segments, maximality of the greedy decomposition implies that $I_{a_{s+1}}$ is disjoint from the span of $G_s$, which forces a new block in the first column and ensures semistandardness. Moreover, $\phi(T)$ is fixed-point, since if a block from $G_{s+1}$ could be moved into the empty position below $C_{a_s}$, then its interval would intersect the span of $G_s$, contradicting the greedy cut.

Conversely, given a fixed-point $P\in \SSYT(\SH(r-1),\mu)$. Read the $k$-blocks of $P$ segment by segment, so that for each $k$-block in the first column, one first reads that block and then reads, from left to right, all upper-right blocks attached to it before the next first-column block begins. This produces an ordered sequence of $n$ vertical $k$-blocks
\[
(C_1,\dots,C_n),
\]
which we place as the $n$ columns of a tableau $\psi(P)$ of shape $\SH(0)$. Since $P$ is semistandard, $\psi(P)$ is semistandard as well. Moreover, by construction, each first-column block of $P$ starts a new segment, while the upper-right blocks attached to it remain in the same segment. Because $P$ is fixed-point, the first block of the next segment cannot be merged into the previous one; equivalently, its extreme interval is disjoint from the span of the previous segment. Therefore the greedy decomposition of $\psi(P)$ has exactly $r$ segments, that is, $r_{\min}(\psi(P))=r$.

It is immediate from the constructions that $\phi$ and $\psi$ are mutually inverse and we are done.
\end{proof}

\begin{example}
Let $k=3$ and $\mu=(2,1,1,1,1,1,1,1)$. 
Consider 
$321 \,|\, 541 \,|\, 876$.
Its extreme intervals are
\[
[1,3],\ [1,5],\ [6,8],
\]
so the greedy decomposition is
\[
[1,3],[1,5]\ \big|\ [6,8].
\]
Hence $r_{\min}(T)=2$. This word correponds to the element 
\[
\ytableaushort{{\none}1,{\none}4,15,2,3,6,7,8}.
\]
\medskip 
We have that 
$
\rowMap_3(\elementaryE_\mu)
=
21\elementaryE_{(2,1)}+567\elementaryE_{(3)}
$, and since the only partition of $3$ with two parts is $(2,1)$, 
there are exactly $21$ tableaux in $\SSYT(\SH(0),\mu)$ with $r_{\min}=2$.
\end{example}

\subsection{A mixed-block reduction for \texorpdfstring{$[\elementaryE_{2^d1^{\,n-2d}}\bigr]\rowMap_k(\elementaryE_{1^{kn}})$}{}}

Let $F_{n,k}\coloneqq\rowMap_k(e_1^{kn})$. 
Under the specialization to two variables  $x,y$, one has
\[
F_{n,k}(x,y)=\sum_{a=0}^{n}\binom{kn}{ka}x^a y^{n-a}.
\]
We have $\elementaryE_\lambda(x,y)=0$ whenever $\lambda_1>2$, so the coefficient of $\elementaryE_2^d e_1^{n-2d}$ is uniquely 
determined by this specialization. 
Let \defin{$\gamma^{(k)}_{n,d}$} be defined by
\[
F_{n,k}(x,y)
=\sum_{d=0}^{\lfloor n/2\rfloor}\gamma^{(k)}_{n,d} \; (xy)^d(x+y)^{n-2d}
= \sum_{d=0}^{\lfloor n/2\rfloor}\gamma^{(k)}_{n,d}\;\elementaryE_{2^d1^{\,n-2d}}.
\]
Therefore, 
\[
[\elementaryE_{2^d1^{\,n-2d}}\bigr]\rowMap_k(\elementaryE_{1^{kn}})= \gamma^{(k)}_{n,d}.
\]
After dividing both sides by $y^n$ and introducing $z\coloneqq x/y$, we obtain
\[
\sum_{m=0}^{n}\binom{kn}{km}z^m 
= \sum_{d=0}^{\lfloor n/2\rfloor}\gamma^{(k)}_{n,d}z^d(1+z)^{n-2d}.
\]
Note that $z^d(1+z)^{n-2d}=\sum_{j=0}^{n-2d}\binom{n-2d}{j}z^{d+j}$.
 Comparing coefficients of $z^m$, we get
\begin{equation}\label{eq:coefdicofz^m}
    \binom{nk}{mk}=\sum_{d=0}^{m}\gamma^{(k)}_{n,d}\binom{n-2d}{m-d}.
\end{equation}

\begin{proposition}
The numbers $\gamma^{(k)}_{n,d}$ are nonnegative.
\end{proposition}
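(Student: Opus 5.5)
The plan is a combinatorial interpretation of $\gamma^{(k)}_{n,d}$ through the palindromic, unimodal (``$\gamma$-positive'') structure of the polynomial $P_{n}(z)\coloneqq\sum_{m=0}^{n}\binom{kn}{km}z^m$. The relation \eqref{eq:coefdicofz^m} says exactly that $P_n(z)$ has $\gamma$-vector $(\gamma^{(k)}_{n,d})_d$ with respect to the basis $z^d(1+z)^{n-2d}$. This basis is used for palindromic polynomials of center $n/2$, and $P_n$ is indeed palindromic since $\binom{kn}{km}=\binom{kn}{k(n-m)}$. So the statement is that $P_n$ is $\gamma$-positive.

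\textbf{Route 1: valley-hopping.} The standard tool is a Foata--Strehl type action. Write $\binom{kn}{km}$ as the number of binary words of length $kn$ with $km$ ones. Equivalently, group the positions into $n$ consecutive blocks of size $k$; then $P_n(z)$ counts words by $(\#\text{ones})/k$, restricted to words whose number of ones is a multiple of $k$. This restriction is awkward, so I would switch to a model closer to the paper. By Theorem~\ref{thm:SumOfe-effic} and Proposition~\ref{prop:intervalGreedy} with $\mu=1^{kn}$, we have $\sum_d \gamma^{(k)}_{n,d}\,t^{n-d}$ restricted to two-part-or-fewer $\nu$; but those results only give sums over all $\nu$ of a fixed length, not over $\nu=2^d1^{n-2d}$. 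So instead I would aim for a direct argument.

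\textbf{Route 2: real-rootedness.} A palindromic polynomial with positive coefficients and only real (necessarily negative) roots is $\gamma$-positive (Brändén's theorem). So it suffices to show that $P_n(z)=\sum_m \binom{kn}{km}z^m$ is real-rooted. For this, use the multisection formula
\[
P_n(w^k)=\frac1k\sum_{j=0}^{k-1}(1+\zeta^j w)^{kn},
\]
where $\zeta=e^{2\pi i/k}$. One checks $k=1,2$ directly: $(1+z)^n$, and $\tfrac12[(1+w)^{2n}+(1-w)^{2n}]$, whose roots in $z=w^2$ are $-\tan^2$ of equally spaced angles, hence negative reals. For general $k$ I would try to show that the roots of $P_n$ are negative and interlace with those of $P_{n-1}$, via a recurrence in $n$ coming from $(1+w)^{k}$-multiplication followed by $k$-section. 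That is, $P_n$ is obtained from $P_{n-1}$ by a linear operator of the form $\sum_i \binom{k}{\cdot}$-weighted shifts, and one would verify that this operator preserves real-rootedness (for example via a Pólya--Schur or multiplier-sequence argument, or the compatibility and interlacing machinery of Chudnovsky--Seymour).

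\textbf{Main obstacle and fallback.} The main obstacle is establishing real-rootedness (or interlacing) for general $k\ge3$. If it fails, I would fall back on proving $\gamma$-positivity directly. One approach inverts \eqref{eq:coefdicofz^m} via the standard formula
\[
\gamma_d=\sum_{j}(-1)^{j}\frac{n-2d+ 2j}{\ldots}\binom{\cdot}{\cdot}\binom{kn}{k(d-j)},
\]
and interprets it through a sign-reversing involution on binary words, toggling a ``double descent'' block in the spirit of the Boolean cancellation used in the proof of Theorem~\ref{thm:SumOfe-effic}. The fixed points would be words with no double-descent blocks and $d$ peak blocks. Either way, I would first run computations for small $n,k$ to decide which route is viable.
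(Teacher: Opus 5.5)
Your Route~2 is the paper's argument in outline. The polynomial $P_{n,k}(z)=\sum_m\binom{kn}{km}z^m$ (your $P_n$) is palindromic, and a palindromic polynomial with nonnegative coefficients and only real zeros has a nonnegative $\gamma$-vector \cite[Lemma 4.1]{Br\"and\'en2004sg}. The gap is that the step carrying all the content, real-rootedness of $P_{n,k}$ for $k\ge 3$, is never established. You check $k=1,2$, say you ``would try'' an interlacing argument, and name it as the main obstacle. The fallback, an inversion formula with placeholders and an undefined involution, does not supply it either.

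Moreover, the recurrence you sketch cannot work as stated, because multiplying by $(1+w)^k$ and extracting the residue-$0$ section does not produce $P_{n,k}$ from $P_{n-1,k}$ alone. Write $(1+w)^{k(n-1)}=\sum_{j=0}^{k-1}w^jf_j(w^k)$, so that $f_0=P_{n-1,k}$. Then one gets $P_{n,k}(z)=(1+z)P_{n-1,k}(z)+z\sum_{j=1}^{k-1}\binom{k}{j}f_j(z)$, which involves the other sections $f_1,\dots,f_{k-1}$. An operator acting on $P_{n-1,k}$ alone, to which a P\'olya--Schur or multiplier-sequence argument could apply, is not available.

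The missing ingredient is a theorem the paper simply cites \cite{AthanasiadisWagner2024,Fisk2006x}. If $\sum_i a_iz^i$ has nonnegative coefficients and only real zeros, then its Veronese section $\sum_i a_{ki}z^i$ again has only real zeros. Applied to $(1+z)^{kn}$, this gives only real negative zeros for $P_{n,k}$, and Br\"and\'en's lemma finishes the proof.

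To prove the Veronese fact yourself, repair your induction by tracking the whole vector of sections. Multiply by one factor $(1+w)$ at a time, over all degrees and not only multiples of $k$. Each step acts by $(f_0,f_1,\dots,f_{k-1})\mapsto(f_0+zf_{k-1},\,f_1+f_0,\,\dots,\,f_{k-1}+f_{k-2})$. You then show that this map preserves the property that the sections are real-rooted and form an interlacing sequence; this is the standard proof of the Veronese theorem.

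There is also a shortcut inside the paper. Theorem~\ref{thm:Stanleyhpositive} together with \eqref{eq:sameAsStanley} shows that $\rowMap_k(\elementaryE_{1^{kn}})$ is $\elementaryE$-positive. Hence $\gamma^{(k)}_{n,d}=[\elementaryE_{2^d1^{n-2d}}]\rowMap_k(\elementaryE_{1^{kn}})\ge 0$, though this rests on the cited unpublished result of Amdeberhan--Shareshian--Stanley.
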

\begin{proof}
Indeed, the polynomial
\[
  \defin{P_{n,k}(z)}\coloneqq\sum_{m=0}^{n}\binom{nk}{mk}z^m
\]
is obtained by taking every $k$-th coefficient in the expansion of
$(1+z)^{nk}$; equivalently, it is a $k$-Veronese section of the 
real-rooted polynomial $(z+1)^{kn}$.
By the standard real-rootedness preservation theorem for
Veronese sections of real-rooted polynomials with nonnegative coefficients,
see for instance~\cite{AthanasiadisWagner2024,Fisk2006x}, the polynomial
$P_{n,k}(z)$ has only real and negative zeros.
Moreover, $P_{n,k}(z)$ is palindromic so it follows from 
\cite[Lemma 4.1]{Branden2004sg} that the \defin{gamma coefficients} in the expansion
\[
  P_{n,k}(z)=
  \sum_{d=0}^{\lfloor n/2\rfloor} \gamma^{(k)}_{n,d} z^d(1+z)^{n-2d}
\]
are nonnegative.
\end{proof}
We have that $P_{n,k}(z)$ is the rank-generating function of a graded poset,
so we may alternatively apply the main result in 
\cite{Branden2004sg} directly, which gives that the $\gamma^{(k)}_{n,d}$
are non-negative.

\begin{proposition}\label{cor:sh0-explicit-12n}
    For every $0\le d\le \lfloor n/2\rfloor$, we have 
    \[
    \bigl[\elementaryE_{2^d1^{\,n-2d}}\bigr]\rowMap_2(\elementaryE_{1^{2n}}) = 4^d\binom{n}{2d}.
    \]
\end{proposition}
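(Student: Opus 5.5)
Since $\elementaryE_\lambda(x,y)=0$ whenever $\lambda_1>2$, the two-variable reduction preceding the statement identifies $\bigl[\elementaryE_{2^d1^{n-2d}}\bigr]\rowMap_2(\elementaryE_{1^{2n}})$ with $\gamma^{(2)}_{n,d}$. These numbers are defined by the gamma expansion
\[
P_{n,2}(z)=\sum_{m=0}^{n}\binom{2n}{2m}z^m=\sum_{d=0}^{\lfloor n/2\rfloor}\gamma^{(2)}_{n,d}\,z^d(1+z)^{n-2d}.
\]
By linear independence of the polynomials $z^d(1+z)^{n-2d}$, it therefore suffices to prove the identity
\[
\sum_{m=0}^{n}\binom{2n}{2m}z^m=\sum_{d}4^d\binom{n}{2d}z^d(1+z)^{n-2d}.
\]

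Substitute $z=w^2$. The left side becomes the even part of $(1+w)^{2n}$, namely
\[
\tfrac12\bigl((1+w)^{2n}+(1-w)^{2n}\bigr)=\tfrac12\bigl(A^n+B^n\bigr),
\]
where $A=(1+w)^2=(1+w^2)+2w$ and $B=(1-w)^2=(1+w^2)-2w$. Write $u=1+w^2=1+z$ and $v=2w$. Then
\[
\tfrac12\bigl((u+v)^n+(u-v)^n\bigr)=\sum_{d}\binom{n}{2d}u^{n-2d}v^{2d}.
\]
Since $v^{2d}=4^dw^{2d}=4^dz^d$, the right side equals $\sum_d 4^d\binom{n}{2d}z^d(1+z)^{n-2d}$, which is exactly the claimed identity. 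Comparing coefficients then gives $\gamma^{(2)}_{n,d}=4^d\binom{n}{2d}$.

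There is essentially no obstacle. The only points to check are that the two-variable specialization determines the coefficients of $\elementaryE_{2^d1^{n-2d}}$, which is already argued before the statement, and that the substitution $z=w^2$ is legitimate. It is, because both sides are polynomials in $z$ and the identity holds as a polynomial identity in $w$.

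As a sanity check, one could instead verify \eqref{eq:coefdicofz^m} directly by a Vandermonde-type sum, but the generating-function argument above avoids that computation.
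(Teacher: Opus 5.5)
Your proof is correct and is essentially the paper's argument in dehomogenized form. The paper writes $F_{n,2}(x,y)=\tfrac12\bigl((\sqrt{x}+\sqrt{y})^{2n}+(\sqrt{x}-\sqrt{y})^{2n}\bigr)=\tfrac12\bigl((s+2\sqrt{p})^n+(s-2\sqrt{p})^n\bigr)$ with $s=x+y$ and $p=xy$, which is your computation after setting $y=1$ and $x=w^2$; your substitution has the small advantage of avoiding formal square roots, and your appeal to the triangularity of the $z^d(1+z)^{n-2d}$ makes the final coefficient comparison explicit.
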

\begin{proof}
We know that
\[ 
F_{n,2}(x,y) = \sum_{a=0}^{n}\binom{2n}{2a}x^ay^{n-a}, 
\]
Set $s=x+y, p=xy$. Then 
\[ 
\sum_{a=0}^{n}\binom{2n}{2a}x^ay^{n-a} = \frac{(\sqrt{x}+\sqrt{y})^{2n}+(\sqrt{x}-\sqrt{y})^{2n}}{2} = \frac{(s+2\sqrt{p})^n+(s-2\sqrt{p})^n}{2}. 
\]
Expanding and keeping only the even powers of $\sqrt{p}$, we obtain
\[
F_{n,2}(x,y) = \sum_{d=0}^{\lfloor n/2\rfloor}\binom{n}{2d}s^{\,n-2d}(2\sqrt{p})^{2d} = \sum_{d=0}^{\lfloor n/2\rfloor}4^d\binom{n}{2d}p^ds^{\,n-2d}=\sum_{d=0}^{\lfloor n/2\rfloor}4^d\binom{n}{2d}(xy)^d(x+y)^{n-2d}.
\]
Note that $\elementaryE_2(x,y)=xy, \elementaryE_1(x,y)=x+y$, then we are done.
\end{proof}
\begin{remark}
In fact, under
$\rowMap_2$, we read the path in consecutive blocks of length $2$, where an east step $E$ contributes $x$ and a north step $N$ contributes $y$.  Hence each length-$2$ block is one of
\[
EE,\quad NN,\quad EN,\quad NE.
\]
Thus a mixed block is necessarily of the form $EN$ or $NE$, and hence contributes exactly one east step and one north step.

To obtain the coefficient of $e_{2^d1^{\,n-2d}}$, we choose $2d$ of the $n$ block positions to support the mixed contribution; this can be done in $\binom{n}{2d}$ ways. Each of these $2d$ mixed blocks can then be chosen independently to be either $EN$ or $NE$, giving $2^{2d}=4^d$ possibilities. Therefore,
\[
\bigl[e_{2^d1^{\,n-2d}}\bigr]\rowMap_2(e_1^{2n})=4^d\binom{n}{2d}.
\]
Thus, 
\begin{equation}\label{eq:gammaKIs2}
\sum_{m \geq 0} \binom{2n}{2m} z^m = \sum_{d\geq 0}  4^d\binom{n}{2d} \cdot z^d (z+1)^{n-2d}.
\end{equation}
We can interpret the $\gamma_{n,d}^{(2)}$ as follows.
Let $B_n$ be the set of words in $\{N,E\}^{2n}$ with an even number of each type of letter.
For a word with $n-2d$ non-mixed blocks, we can define a $\setZ_2^{n-2d}$ action,
where we can interchange $EE \leftrightarrow NN$ on each block independently.
The number of orbits in $B_n$ under this action is $\gamma_{n,d}^{(2)}$.
\end{remark}

\section{Application}\label{Application}
\subsection{Hadamard products of Jacobi--Trudi matrices}

\subsubsection{The adjoint row map of Schur functions}

Let $H(\lambda)$ and $H(\lambda/\mu)$ denote the following matrices 
of complete homogeneous symmetric functions:
\begin{equation}\label{eq:jacobiTrudiMatrices}
H(\lambda)_{ij} \coloneqq  \completeH_{\lambda_i - i + j}(\xvec),
\qquad 
H(\lambda/\mu)_{ij} \coloneqq  \completeH_{\lambda_i - \mu_j - i + j}(\xvec).
\end{equation}
By the Jacobi--Trudi identity (see \cite{Macdonald1995})
we have that 
\[
 \schur_\lambda = \det H(\lambda), \qquad 
 \schur_{\lambda/\mu} = \det H(\lambda/\mu).
\]

\begin{definition}
Let $\completeH$ be an indeterminate. For matrices $A=\left(\completeH_{a_{ij}}\right)$ 
and $B=\left(\completeH_{b_{ij}} \right)$ 
of the same dimension, the product $A \star B$ is defined by
\[
\defin{(A \star  B)_{ij}}\coloneqq \completeH_{a_{ij}+b_{ij}}.
\]
\end{definition}

\begin{theorem}
Let $\lambda^{(1)}, \dotsc, \lambda^{(k)}$ be partitions, all extended to have the same length $\ell$. Let $\rho = (\ell-1, \ell-2, \dotsc, 1, 0)$ 
be the staircase partition. Then we have that 
\begin{equation}\label{eq:starProductDet}
\det \, H(\lambda^{(1)}) \star H(\lambda^{(2)}) \star \dotsb \star 
H(\lambda^{(k)}) 
=
\schur_{ (\lambda^{(1)}+\cdots+\lambda^{(k)}+(k-1)\rho)/(k-1)\rho }.
\end{equation}
\end{theorem}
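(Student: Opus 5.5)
The plan is to compute the $(i,j)$ entry of the $\star$-product directly and recognize the resulting matrix as a skew Jacobi--Trudi matrix, exactly as in the proof of Proposition~\ref{prop:rowMapAdjSchurExp}. By definition of $\star$ and of $H(\lambda)$ in \eqref{eq:jacobiTrudiMatrices}, the $(i,j)$ entry of $H(\lambda^{(1)}) \star \dotsb \star H(\lambda^{(k)})$ is
\[
\completeH_{\sum_{m=1}^k (\lambda^{(m)}_i - i + j)} = \completeH_{\Lambda_i - k(i-j)},
\qquad \Lambda \coloneqq \lambda^{(1)}+\dotsb+\lambda^{(k)}.
\]
The $\star$-product is associative and commutative on such matrices, so this needs only an induction on $k$ (or just unwinding the definition), with the convention $\completeH_r=0$ for $r<0$ and $\completeH_0=1$ applied after summing the indices.

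Next I set $\alpha \coloneqq \Lambda + (k-1)\rho$ and $\beta \coloneqq (k-1)\rho$, where $\rho_i = \ell - i$. Then
\[
\alpha_i - \beta_j - i + j = \Lambda_i + (k-1)(\ell-i) - (k-1)(\ell-j) - i + j = \Lambda_i - k(i-j).
\]
Hence the $\star$-product matrix equals $H(\alpha/\beta)$ entrywise. I also check that $\alpha/\beta$ is a genuine skew shape. Both $\alpha$ and $\beta$ are partitions, since $\Lambda$ is weakly decreasing and $\rho$ is strictly decreasing. Moreover $\alpha_i \ge \beta_i$ because $\Lambda_i\ge 0$.

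The skew Jacobi--Trudi identity $\schur_{\alpha/\beta}=\det H(\alpha/\beta)$ then gives \eqref{eq:starProductDet}.

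The main point requiring care is the indexing convention rather than any real obstacle. The entries $\completeH_{a_{ij}}$ of each $H(\lambda^{(m)})$ may have negative index, which represents $0$. The $\star$-product must be interpreted formally on indices: $\completeH$ is treated as an indeterminate and the indices are added before evaluating. The equality holds because we compare indices, and the index $\Lambda_i - k(i-j)$ is exactly the one appearing in $H(\alpha/\beta)$. Finally, I note that the case $\lambda^{(1)}=\dotsb=\lambda^{(k)}=\lambda$ recovers $\rowAdj{k}(\schur_\lambda) = \schur_{(k\lambda+(k-1)\rho)/(k-1)\rho}$ from Proposition~\ref{prop:rowMapAdjSchurExp}, which serves as a consistency check.
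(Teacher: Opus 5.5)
Your proposal is correct and matches the paper's proof essentially verbatim. Both compute the $(i,j)$ entry of the $\star$-product as $\completeH_{\Lambda_i - k(i-j)}$, identify the matrix with $H(\alpha/\beta)$ for $\alpha = \Lambda + (k-1)\rho$ and $\beta = (k-1)\rho$, and conclude by the skew Jacobi--Trudi identity; your remarks that indices are added formally before negative indices become $0$, and that $\alpha/\beta$ is a genuine skew shape, are details the paper leaves implicit.
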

\begin{proof}
By the definition of $\star$ and the expressions in 
\eqref{eq:jacobiTrudiMatrices}, the entry indexed $(i,j)$ in the matrix in \eqref{eq:starProductDet}
is given by the complete homogeneous symmetric function
indexed by the sum
\[
\sum_{t=1}^k \left( \lambda^{(t)} - i + j\right)
= \left(\sum_{t=1}^k \lambda^{(t)}_i\right) - k i + k j.
\]
If we now set
\[
\nu \coloneqq \sum_{t=1}^k \lambda^{(t)} + (k-1)\rho,
\quad 
\tau \coloneqq (k-1)\rho,
\]
then for all $1\leq i,j\leq \ell$,
\[
\nu_i - \tau_j - i + j
= \left(\sum_t \lambda^{(t)}_i + (k-1)(\ell - i)\right)- (k-1)(\ell - j)-i + j = 
\left(\sum_{t=1}^k \lambda^{(t)}_i \right) - k i + k j.
\]
Therefore,
 \[
 H(\lambda^{(1)}) \star H(\lambda^{(2)}) \star \dotsb \star 
H(\lambda^{(k)}) = H(\nu / \tau )= 
  H\left( 
  (\lambda^{(1)}+\cdots+\lambda^{(k)} + (k-1)\rho) / (k-1)\rho \right).
\]
By taking determinants, we get \eqref{eq:starProductDet}.
\end{proof}

\begin{corollary}
We have that
\begin{equation}
 \rowMap_k^\perp(\schur_\mu)  = 
 \schur_{(k \mu + (k-1)\rho)/(k-1)\rho }.
 \end{equation}
\end{corollary}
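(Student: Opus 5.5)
The plan is to specialize the preceding theorem to $k$ equal copies of the same partition. Take $\lambda^{(1)}=\dots=\lambda^{(k)}=\mu$, padded with zeros to a common length $\ell=\ell(\mu)$, and let $\rho=(\ell-1,\dots,1,0)$. The $(i,j)$ entry of the $k$-fold product $H(\mu)\star\dotsb\star H(\mu)$ is then $\completeH_{k(\mu_i-i+j)}$. By \eqref{eq:starProductDet}, its determinant is $\schur_{(k\mu+(k-1)\rho)/(k-1)\rho}$.

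On the other side, I use the Jacobi--Trudi identity $\schur_\mu=\det\bigl(\completeH_{\mu_i-i+j}\bigr)_{i,j}$. Here $\completeH_m=0$ for $m<0$ and $\completeH_0=1$. The key point is that $\rowAdj{k}$ is a ring homomorphism. This is implicit in Proposition~\ref{prop:adjMap}: the adjoint is determined on the $\completeH_\beta$ basis by $\completeH_\beta\mapsto\completeH_{k\beta}$, so it is the algebra map induced by $\completeH_r\mapsto\completeH_{kr}$.

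Because it is multiplicative, $\rowAdj{k}$ commutes with taking the determinant of a matrix whose entries are symmetric functions. Applying it entrywise sends $\completeH_{\mu_i-i+j}$ to $\completeH_{k(\mu_i-i+j)}$. The boundary cases also match: negative indices stay negative, so these entries are $0$ on both sides, and $\completeH_0=1\mapsto 1=\completeH_0$. Hence $\rowAdj{k}(\schur_\mu)$ is exactly the determinant computed in the first paragraph, which gives the claim. The same identity was already derived inside the proof of Proposition~\ref{prop:rowMapAdjSchurExp} with $\mu=\emptyset$ as the inner shape, so the corollary can alternatively be cited directly from there.

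The only step needing care is justifying that $\rowAdj{k}$ is a ring homomorphism, rather than merely a linear map defined on the $\completeH_r$. I would make this explicit as follows. The $\completeH_\beta$ form a basis, and the adjoint computed in Proposition~\ref{prop:adjMap} satisfies $\completeH_\beta\mapsto\completeH_{k\beta}=\prod_i\completeH_{k\beta_i}$. This is exactly the multiplicative extension of $\completeH_r\mapsto\completeH_{kr}$, so the adjoint is the algebra homomorphism with that action on generators. I would also check that the identity is insensitive to padding $\mu$ with extra zeros, so that the choice of $\ell$ in $\rho$ is harmless. Enlarging $\ell$ by one adds $(k-1)$ to every part of both the outer and inner shapes and appends a new row of length $0$ to both. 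This leaves the skew shape unchanged up to translation, so the skew Schur function is unchanged.
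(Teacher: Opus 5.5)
Your proposal is correct and follows the paper's argument. You apply $\rowAdj{k}$ entrywise to the Jacobi--Trudi matrix of $\schur_\mu$ to get $\det\bigl(\completeH_{k(\mu_i-i+j)}\bigr)$, recognize this as the determinant of the $k$-fold product $H(\mu)\star\dotsb\star H(\mu)$, and invoke \eqref{eq:starProductDet}; your explicit check that $\rowAdj{k}$ is multiplicative and that padding $\rho$ is harmless fills in details the paper leaves implicit.
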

\begin{proof}
The Schur expansion coefficients 
of $\rowMap_k(\schur_\lambda)$ are given by
\[
[\schur_\mu] \rowMap_k(\schur_\lambda)
= \langle \schur_\lambda, \rowMap_k^\perp(\schur_\mu) \rangle
= \left\langle \schur_\lambda, \det\left(\completeH_{k(\mu_i - i + j)}\right) \right\rangle.
\]
Hence,
\[
\rowMap_k^\perp(\schur_\mu) = \det\,
\underbrace{H(\mu) \star H(\mu) \star \dotsb \star H(\mu)}_{k \text{ times}}
\]
and we are done.
\end{proof}

\subsubsection{Hadamard products of Jacobi--Trudi matrices}

\begin{definition}[Hadamard product]
    For two matrices $A=(x_{ij})$ and $B=(y_{ij})$ of the same dimension $m \times n$, the \defin{Hadamard product} $A * B$ is the
    matrix given by element-wise product,
    \defin{$(A*B)_{ij} =x_{ij}y_{ij}$.}
\end{definition}

The Schur expansion coefficients of $\colMap_k(\schur_{\lambda^k})$ are given by
\[
[\schur_\mu] \colMap_k(\schur_{\lambda^k})
= \langle \schur_{\lambda^k}, \colMap_k^\perp(\schur_\mu) \rangle
= \left\langle \schur_{\lambda^k}, \det\left(\completeH_{(\mu_i - i + j)^k}\right) \right\rangle.
\]

Thus, the adjoint operator $\colMap_k^\perp$ 
corresponds to taking Hadamard powers of Jacobi--Trudi matrices:
\[
\colMap_k^\perp(\schur_\lambda) =
\det \; \left( \completeH_{\lambda_i - i + j} \right)^{*k}.
\]

\begin{remark}
  So for $\lambda^{(1)} = \lambda^{(2)} = (2,2,2,2)$,
  the Hadamard product above is not even monomial positive, with 
  $(-4)\monomial_{10,3,3}$ appearing in the monomial expansion.
Direct computation gives that 
\[
 [\schur_{10,3,3}] \colAdj{2}(\schur_{(2,2,2,2)}) = -4,
\]
and $\colMap_2(\schur_{10,3,3})$ is thus not Schur positive.
\end{remark}

In fact, $\colMap_2(\schur_{\lambda^2})$ is \emph{not} Schur positive in general.
The smallest counterexample occurs at $\lambda^2 = (10,10,2,2)$:
\begin{align*}
\colMap_2(\schur_{(10,10,2,2)})
&= 54200\,\schur_{1^{12}}
+ 58880\,\schur_{2,1^{10}}
+ 34882\,\schur_{2^2,1^8}
+ 12218\,\schur_{2^3,1^6}
\\&\quad
+ 1490\,\schur_{2^4,1^4}
+ 282\,\schur_{2^5,1^2}
+ 40\,\schur_{2^6}
+ 36282\,\schur_{3,1^9}
\\&\quad
+ 27784\,\schur_{3,2,1^7}
+ 10502\,\schur_{3,2^2,1^5}
+ 1490\,\schur_{3,2^3,1^3}
+ 294\,\schur_{3,2^4,1}
\\&\quad
+ 9436\,\schur_{3^2,1^6}
+ 4614\,\schur_{3^2,2,1^4}
+ 802\,\schur_{3^2,2^2,1^2}
+ 158\,\schur_{3^2,2^3}
\\&\quad
+ 988\,\schur_{3^3,1^3}
+ 180\,\schur_{3^3,2,1}
+ \mathbf{(-14)}\,\schur_{3^4}
+ 18450\,\schur_{4,1^8}
\\&\quad
+ 13726\,\schur_{4,2,1^6}
+ 5072\,\schur_{4,2^2,1^4}
+ 906\,\schur_{4,2^3,1^2}
+ 174\,\schur_{4,2^4}
\\&\quad
+ 5630\,\schur_{4,3,1^5}
+ 2668\,\schur_{4,3,2,1^3}
+ 586\,\schur_{4,3,2^2,1}
+ 548\,\schur_{4,3^2,1^2}
\\&\quad
+ 106\,\schur_{4,3^2,2}
+ 1328\,\schur_{4^2,1^4}
+ 648\,\schur_{4^2,2,1^2}
+ 148\,\schur_{4^2,2^2}
\\&\quad
+ 146\,\schur_{4^2,3,1}
+ 18\,\schur_{4^3}
+ 8330\,\schur_{5,1^7}
+ 5546\,\schur_{5,2,1^5}
\\&\quad
+ 1984\,\schur_{5,2^2,1^3}
+ 454\,\schur_{5,2^3,1}
+ 1996\,\schur_{5,3,1^4}
+ 926\,\schur_{5,3,2,1^2}
\\&\quad
+ 216\,\schur_{5,3,2^2}
+ 156\,\schur_{5,3^2,1}
+ 514\,\schur_{5,4,1^3}
+ 250\,\schur_{5,4,2,1}
\\&\quad
+ 36\,\schur_{5,4,3}
+ 75\,\schur_{5^2,1^2}
+ 31\,\schur_{5^2,2}
+ 3202\,\schur_{6,1^6}
\\&\quad
+ 1866\,\schur_{6,2,1^4}
+ 668\,\schur_{6,2^2,1^2}
+ 160\,\schur_{6,2^3}
+ 532\,\schur_{6,3,1^3}
\\&\quad
+ 250\,\schur_{6,3,2,1}
+ 28\,\schur_{6,3^2}
+ 105\,\schur_{6,4,1^2}
+ 45\,\schur_{6,4,2}
\\&\quad
+ 14\,\schur_{6,5,1}
+ \schur_{6^2}
+ 982\,\schur_{7,1^5}
+ 494\,\schur_{7,2,1^3}
\\&\quad
+ 184\,\schur_{7,2^2,1}
+ 105\,\schur_{7,3,1^2}
+ 43\,\schur_{7,3,2}
+ 14\,\schur_{7,4,1}
\\&\quad
+ \schur_{7,5}
+ 218\,\schur_{8,1^4}
+ 95\,\schur_{8,2,1^2}
+ 33\,\schur_{8,2^2}
\\&\quad
+ 14\,\schur_{8,3,1}
+ \schur_{8,4}
+ 29\,\schur_{9,1^3}
+ 12\,\schur_{9,2,1}
\\&\quad
+ \schur_{9,3}
+ \schur_{10,1^2}
+ \schur_{10,2}.
\end{align*}
The unique negative coefficient $\mathbf{-14}$ appears at $\schur_{(3,3,3,3)}$.
Similarly, $\colMap_2(\schur_{(9,9,2,2,1,1)})$ has coefficient $-20$ at $\schur_{(3,3,3,3)}$.

\subsection{Comparison with the Verschiebung operator}

\begin{definition}[The Adams operator and the Verschiebung operator]
The \defin{$k$th Adams operator} on symmetric functions is the ring homomorphism 
\[
 f(\xvec) \mapsto f \circ \powersum_k = f(x_1^k, x_2^k, \dotsc ).
\]
The adjoint operator is the \defin{Verschiebung operator} $\versch_k$, which is defined as
\[
 \versch_k \powersum_m(\xvec) = 
 \begin{cases}
k\cdot \powersum_{m/k}(\xvec) & \text{ if } k \mid m \\
 0 & \text{ otherwise,}
 \end{cases}
\]
and then extend this to a ring homomorphism. 
\end{definition}

\begin{lemma}
The Verschiebung operator acts as follows:
\[
 \versch_k(\completeH_m) = 
 \begin{cases} 
 \completeH_{m/k} & \text{if } k \mid m \\ 0 &\text{otherwise}
 \end{cases}
 \qquad 
 \text{ and }  
 \qquad 
 \versch_k(\elementaryE_m) =
  \begin{cases} 
 (-1)^{\frac{m}{k}(k-1)}\elementaryE_{m/k} & \text{if } k \mid m \\ 0 &\text{otherwise}.
\end{cases}
\]
\end{lemma}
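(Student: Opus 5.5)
The natural route is through generating functions. Since $\versch_k$ is a ring homomorphism determined by its action on power sums, we can compute it on the generating series $H(t)=\sum_{m\ge0}\completeH_m t^m$ and $E(t)=\sum_{m\ge0}\elementaryE_m t^m$. We will use the standard identities
\[
H(t)=\exp\Bigl(\sum_{r\ge1}\powersum_r\frac{t^r}{r}\Bigr),\qquad
E(t)=\exp\Bigl(\sum_{r\ge1}(-1)^{r-1}\powersum_r\frac{t^r}{r}\Bigr).
\]
We extend $\versch_k$ coefficientwise in $t$. Because it is a continuous ring homomorphism, it commutes with $\exp$.

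Apply $\versch_k$ to $H(t)$. Only the terms with $r=kq$ survive, and $\powersum_{kq}$ maps to $k\,\powersum_q$, so
\[
\versch_k H(t)=\exp\Bigl(\sum_{q\ge1}k\,\powersum_q\frac{t^{kq}}{kq}\Bigr)=\exp\Bigl(\sum_{q\ge1}\powersum_q\frac{(t^k)^q}{q}\Bigr)=H(t^k).
\]
Comparing coefficients of $t^m$ gives $\versch_k(\completeH_m)=\completeH_{m/k}$ when $k\mid m$, and $0$ otherwise.

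For the elementary functions, the same computation with the sign factor $(-1)^{kq-1}$ gives
\[
\versch_k E(t)=\exp\Bigl(\sum_{q\ge1}(-1)^{kq-1}\powersum_q\frac{t^{kq}}{q}\Bigr).
\]
The only point needing care is the sign bookkeeping, which we handle by writing $(-1)^{kq-1}=(-1)^{q-1}\cdot(-1)^{(k-1)q}$. Set $u=(-1)^{k-1}t^k$, so that $u^q=(-1)^{(k-1)q}t^{kq}$. The exponent then becomes $\sum_{q}(-1)^{q-1}\powersum_q u^q/q$, and hence
\[
\versch_k E(t)=E\bigl((-1)^{k-1}t^k\bigr).
\]
Extracting the coefficient of $t^m$ with $m=kq$ gives $(-1)^{(k-1)q}\elementaryE_q=(-1)^{\frac mk(k-1)}\elementaryE_{m/k}$, and the coefficient vanishes when $k\nmid m$.

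As an alternative check, we can derive the $\elementaryE$ case from the $\completeH$ case via $\omega$. Since $\omega\powersum_r=(-1)^{r-1}\powersum_r$, we get $\versch_k\circ\omega=\epsilon\circ\omega\circ\versch_k$ up to the sign twist $\powersum_q\mapsto(-1)^{(k-1)q}\powersum_q$. On degree-$q$ functions this twist is multiplication by $(-1)^{(k-1)q}$. This reproduces the same formula.
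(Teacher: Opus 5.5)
The paper states this lemma without proof (it is treated as standard), so there is no argument of the authors' to compare against. Your generating-function proof is correct and complete.

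The key step is that the coefficientwise extension of $\versch_k$ to $\Lambda[[t]]$ is a ring homomorphism commuting with $\exp$ of a series without constant term. This is justified because each coefficient of $t^m$ involves only finitely many terms and $\Lambda$ is taken over $\mathbb{Q}$. Your sign bookkeeping $(-1)^{kq-1}=(-1)^{q-1}(-1)^{(k-1)q}$, giving $\versch_k E(t)=E\bigl((-1)^{k-1}t^k\bigr)$, is also right. For example, $\versch_2(\elementaryE_2)=\versch_2\bigl((\powersum_1^2-\powersum_2)/2\bigr)=-\powersum_1=-\elementaryE_1$, as the formula predicts.

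Your route has the merit of working directly from the definition the paper actually gives, namely the power-sum formula extended multiplicatively. An alternative for the $\completeH$-case uses the adjointness with the Adams operator, which the paper also asserts. The pairing $\langle \versch_k\completeH_m, g\rangle=\langle \completeH_m, g(x_1^k,x_2^k,\dots)\rangle$ is the coefficient of $x_1^m$ in $g(x_1^k,x_2^k,\dots)$. This vanishes unless $k\mid m$, and otherwise equals $\langle \completeH_{m/k}, g\rangle$. That proves $\versch_k\completeH_m=\completeH_{m/k}$ without power sums. The $\elementaryE$-case then follows from your $\omega$-identity $\versch_k\circ\omega=\epsilon\circ\omega\circ\versch_k$, with $\epsilon(\powersum_q)=(-1)^{(k-1)q}\powersum_q$ extended multiplicatively.

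One small wording fix: drop the phrase ``up to the sign twist'' there. $\epsilon$ \emph{is} that twist, and the identity holds exactly.
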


\begin{corollary}\label{cor:versch}
Let $\lambda \vdash n$ and $k$ be a positive integer.
Define the coefficients $A_{\lambda,\mu}$, $B_{\lambda,\mu}$, and $C_{\lambda,\mu}$ via the relations
\[
  \schur_{k\lambda}(\xvec) = \sum_{\mu \vdash kn} A_{\lambda,\mu} \completeH_{\mu}(\xvec)
  = \sum_{\mu \vdash kn} B_{\lambda,\mu} \elementaryE_{\mu}(\xvec)
  = \sum_{\mu \vdash kn} C_{\lambda,\mu} \frac{\powersum_{\mu}(\xvec)}{z_\mu}.
\]
Then
\[
\versch_k( \schur_{k\lambda}(\xvec) ) = \sum_{\mu \vdash n} A_{\lambda,k\mu} \completeH_{\mu}(\xvec)
  = (-1)^{n(k-1)} \sum_{\mu \vdash n} B_{\lambda,k\mu} \elementaryE_{\mu}(\xvec)
  = \sum_{\mu \vdash n} C_{\lambda,k\mu} \frac{\powersum_{\mu}(\xvec)}{z_\mu}.
\]
\end{corollary}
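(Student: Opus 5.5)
The plan is to apply the preceding lemma on $\versch_k(\completeH_m)$ and $\versch_k(\elementaryE_m)$ termwise to each of the three expansions of $\schur_{k\lambda}$, using that $\versch_k$ is a ring homomorphism. No special property of $\schur_{k\lambda}$ should be needed; the identities hold for any symmetric function of degree $kn$, written $f=\sum_\mu A_\mu \completeH_\mu=\sum_\mu B_\mu\elementaryE_\mu=\sum_\mu C_\mu \powersum_\mu/z_\mu$.

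For the $\completeH$-expansion, multiplicativity gives $\versch_k(\completeH_\mu)=\prod_i\versch_k(\completeH_{\mu_i})$. This vanishes unless every part of $\mu$ is divisible by $k$, i.e.\ $\mu=k\nu$ with $\nu\vdash n$, and in that case it equals $\completeH_\nu$. Reindexing the surviving terms by $\nu$ yields the first identity.

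For the $\elementaryE$-expansion, the same argument shows that only $\mu=k\nu$ survive, and then
\[
\versch_k(\elementaryE_{k\nu})=\prod_i(-1)^{\nu_i(k-1)}\elementaryE_{\nu_i}=(-1)^{(k-1)\sum_i\nu_i}\elementaryE_\nu=(-1)^{n(k-1)}\elementaryE_\nu .
\]
The sign depends only on $|\nu|=n$, so it factors out of the whole sum and gives the second identity.

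The power-sum case needs a little more care, and I expect it to be the only real obstacle, namely the normalization by $z_\mu$. By the definition of $\versch_k$ we have $\versch_k(\powersum_\mu)=0$ unless $\mu=k\nu$, and $\versch_k(\powersum_{k\nu})=k^{\ell(\nu)}\powersum_\nu$. Since $m_{ki}(k\nu)=m_i(\nu)$,
\[
z_{k\nu}=\prod_i (ki)^{m_i(\nu)}m_i(\nu)!=k^{\ell(\nu)}z_\nu .
\]
Hence $\versch_k(\powersum_{k\nu}/z_{k\nu})=\powersum_\nu/z_\nu$, which gives the third identity with coefficient $C_{\lambda,k\nu}$.

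Finally, I would note that the lemma itself can be checked from the generating functions $\sum_m \completeH_m t^m=\exp(\sum_r \powersum_r t^r/r)$ and $\sum_m \elementaryE_m t^m=\exp(\sum_r(-1)^{r-1}\powersum_r t^r/r)$. Applying $\versch_k$ kills every $r$ with $k\nmid r$ and sends $\powersum_{kr}t^{kr}/(kr)$ to $\powersum_r (t^k)^r/r$. For $\elementaryE$ the sign $(-1)^{kr-1}$ is then rewritten as $(-1)^{r-1}\bigl((-1)^{k-1}\bigr)^r$, which is exactly the substitution $t^k\mapsto(-1)^{k-1}t^k$. Since the lemma is stated earlier, this check is not required for the corollary.
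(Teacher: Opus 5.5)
Your proposal is correct and follows the paper's proof. The paper likewise applies the lemma on $\versch_k(\completeH_m)$ and $\versch_k(\elementaryE_m)$ multiplicatively, with the sign collapsing to $(-1)^{n(k-1)}$, and uses $z_{k\nu}=k^{\ell(\nu)}z_\nu$ to cancel the factor $k^{\ell(\nu)}$ in $\versch_k(\powersum_{k\nu})$. Your remark that nothing specific to $\schur_{k\lambda}$ is used is accurate.
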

\begin{proof}
The identities for the complete homogeneous and elementary symmetric functions follow directly after applying $\versch_k$.

We now consider the power-sum expansion.
By the definition of the centralizer size $z_\mu$ uniformly scaling the parts of the partition by $k$ yields
\[
z_{k\mu} = \prod_i (ki)^{m_i} m_i! = k^{\sum m_i} \prod_i i^{m_i} m_i! = k^{\ell(\mu)} z_\mu.
\]
These cancel the extra $k$-factors appearing when applying $\versch_k$
to $\powersum_{k\mu}$.
\end{proof}

In \cite{Albion2022}, Albion proves the following nice result about the image of
a skew Schur function under the Verschiebung operator.
\begin{theorem}\label{thm:albion}
 We have that $\versch_k(\schur_{\lambda/\mu}) = 0$ 
 unless $\lambda/\mu$ is tileable by $k$-ribbons, in which case
\[
\versch_k\left( \schur_{\lambda/\mu} \right) = \operatorname{sgn}_k(\lambda/\mu) \prod_{r=0}^{k-1} \schur_{\lambda^{(r)}/\mu^{(r)}}.
\]
\end{theorem}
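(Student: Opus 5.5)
Since $\versch_k$ is adjoint to the Adams operator $f\mapsto f[\powersum_k]$, it suffices to compute $\langle \versch_k(\schur_{\lambda/\mu}),\schur_{\nu^{(0)}}\cdots\schur_{\nu^{(k-1)}}\rangle$. By the $\completeH$-part of the preceding lemma, we may instead apply $\versch_k$ directly to a Jacobi--Trudi determinant. Following the approach of the adjoint computation in Proposition~\ref{prop:rowMapAdjSchurExp}, I would write
\[
\schur_{\lambda/\mu}=\det\bigl(\completeH_{\lambda_i-\mu_j-i+j}\bigr)_{i,j=1}^{N},
\]
with $N$ a multiple of $k$, large enough that $\ell(\lambda)\le N$. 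Since $\versch_k$ is a ring homomorphism with $\versch_k(\completeH_m)=\completeH_{m/k}$ when $k\mid m$ and $0$ otherwise, we get
\[
\versch_k(\schur_{\lambda/\mu})=\det\bigl(\completeH_{(\lambda_i-\mu_j-i+j)/k}\,[k\mid \lambda_i-\mu_j-i+j]\bigr)_{i,j}.
\]

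Next I would pass to $\beta$-numbers. Put $a_i=\lambda_i+N-i$ and $b_j=\mu_j+N-j$. Entry $(i,j)$ is then $\completeH_{(a_i-b_j)/k}$ if $a_i\equiv b_j \pmod k$, and $0$ otherwise. Sort the rows by the residue of $a_i$ and the columns by the residue of $b_j$. The matrix becomes block diagonal, with one block for each residue class $r$, and each block is square or the determinant vanishes. The entries of the block for $r$ are $\completeH_{\lfloor a_i/k\rfloor-\lfloor b_j/k\rfloor}$ over the $a_i,b_j$ congruent to $r$.

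Two facts about these blocks are needed.
\begin{itemize}
\item The block sizes agree for every $r$ exactly when $\lambda$ and $\mu$ have the same $k$-core data (equal counts of beads on each runner of the abacus). Combined with the condition $\mu\subseteq\lambda$ (otherwise the skew Schur function is zero and the blocks turn out singular), this matches the standard criterion that $\lambda/\mu$ is tileable by $k$-ribbons. If the sizes differ, the permuted matrix has a zero rectangle that forces the determinant to be $0$, giving the vanishing statement.
\item When the sizes agree, the block for residue $r$ is the Jacobi--Trudi matrix of $\schur_{\lambda^{(r)}/\mu^{(r)}}$. Here $\lambda^{(r)}$ and $\mu^{(r)}$ are the $k$-quotient components read off from runner $r$, with $\beta$-sets $\{\lfloor a_i/k\rfloor : a_i\equiv r\}$ and similarly for $\mu$. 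This holds because a Jacobi--Trudi determinant in $\beta$-number form depends only on the two $\beta$-sets.
\end{itemize}
The determinant is then the product of the $k$ blocks times the sign of the row and column sorting permutations, which gives $\operatorname{sgn}_k(\lambda/\mu)\prod_r \schur_{\lambda^{(r)}/\mu^{(r)}}$.

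The main obstacle is the sign. I must show that the sign of the sorting permutation (rows composed with columns) equals $\operatorname{sgn}_k(\lambda/\mu)$, that is, $(-1)^{\sum(\text{heights of ribbons})}$ in any ribbon tiling. I would first check that the sign is independent of $N$ as long as $k\mid N$. Then I would argue by induction on the number of ribbons. Removing one $k$-ribbon of height $h$ from $\lambda$ moves one bead $a_i\mapsto a_i-k$ on its runner. This passes over exactly $h-1$ other beads, none on the same runner, so the parity of the sorted position changes by $h-1$. That matches the ribbon-sign convention and simultaneously shows the tiling sign is well defined.
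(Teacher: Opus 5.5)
The paper gives no proof of this theorem; it quotes it from Albion. Your argument therefore has to stand on its own. Most of your reduction is sound: applying the ring homomorphism $\versch_k$ entrywise to Jacobi--Trudi, passing to $\beta$-numbers, block-diagonalizing by residue, identifying the blocks with quotient Jacobi--Trudi matrices, and tracking the sign by bead moves all work.

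\textbf{The gap.} One claim in your first bullet is false, and the proof depends on it. Equal bead counts on every runner together with $\mu\subseteq\lambda$ do \emph{not} imply that $\lambda/\mu$ is tileable by $k$-ribbons. Take $k=2$, $\lambda=(3,1)$, $\mu=(2)$. With $N=2$, both partitions have one bead on each runner, so both have empty $2$-core, and $\mu\subseteq\lambda$. Yet $\lambda/\mu$ consists of the two non-adjacent cells $(1,3)$ and $(2,1)$, which admit no domino tiling. For such shapes your proof fails in two ways. First, your only vanishing mechanism is unequal block sizes, which does not occur here, so vanishing is not established. Second, your sign argument is an induction along a ribbon tiling, and no tiling exists. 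The theorem itself is consistent here: $\schur_{(3,1)/(2)}=\completeH_1^2$, so $\versch_2(\schur_{(3,1)/(2)})=\versch_2(\completeH_1)^2=0$.

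\textbf{The repair} is short.
\begin{enumerate}
\item Use the correct abacus criterion: $\lambda/\mu$ is tileable if and only if the runner counts agree \emph{and} $\mu^{(r)}\subseteq\lambda^{(r)}$ for every $r$. This holds because removing a $k$-ribbon is the same as sliding one bead one step up its runner, which removes one cell from a single quotient component.
\item Show the product vanishes in the remaining case (equal counts but some $\mu^{(r)}\not\subseteq\lambda^{(r)}$). If $\mu^{(r)}_t>\lambda^{(r)}_t$, then for all $i\ge t\ge j$,
\[
\lambda^{(r)}_i-\mu^{(r)}_j-i+j\le\lambda^{(r)}_t-\mu^{(r)}_t<0 .
\]
So the $r$-th block has an $(m_r-t+1)\times t$ zero submatrix, and its determinant is $0$. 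In the example above, the runner-$1$ block is $(\completeH_{-1})=0$.
\end{enumerate}
With this in place, your induction along an actual tiling gives the sign.

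\textbf{A smaller fix.} Let $h$ be the number of rows of a ribbon. Your bead argument produces the sign $(-1)^{\sum(h-1)}$, which is the paper's convention. It is not $(-1)^{\sum h}$ as you first wrote; the two differ by the global factor $(-1)^{|\lambda/\mu|/k}$.
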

Here, $\operatorname{sgn}_k(\lambda/\mu)$ is the sign of (any) border-strip tableau of shape $\lambda/\mu$ and all ribbons have size $k$.
The skew shapes $\left\{\lambda^{(r)}/\mu^{(r)} \right\}_{r=0}^{k-1}$ 
appearing in the product are 
given by the \emph{$k$-quotient} of the shape $\lambda/\mu$, see \cite{AlexanderssonPfannererRubeyUhlin2020} for 
details.

\subsection{Schur positivity}

Recall that the Schur functions expanded in the power-sum basis
are given by
\[
 \schur_{\lambda}(\xvec) = \sum_{\mu} \frac{\chi_{\lambda \mu}}{z_\mu} \powersum_\mu(\xvec),
\]
where $\chi_{\lambda \mu}$ are the $\symS_n$-character values. 
From Corollary~\ref{cor:versch}, it follows that 
\begin{equation}
   \versch_k(\schur_{k\lambda}) =  \sum_{\mu} \chi_{k\lambda,k\mu} \frac{\powersum_\mu(\xvec)}{z_\mu}.
\end{equation}
Moreover, since the shape $k\lambda$ is tileable by $|\lambda|$ horizontally arranged $k$-ribbons, it follows from Theorem~\ref{thm:albion} that
\[
\versch_k(\schur_{k\lambda}) = \prod_{r=0}^{k-1} \schur_{\lambda^{(r)}}
\]
which is Schur-positive as the right hand side is a product of Schur functions. 
In fact, $\versch_k(\schur_{k\lambda/k\mu})$ is Schur-positive via the same argument.

\begin{proposition}
 Suppose $k \geq \length(\lambda)$. 
 Then $\versch_k(\schur_{k\lambda})  = \completeH_\lambda$.
\end{proposition}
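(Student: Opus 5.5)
Since $\versch_k$ is a ring homomorphism, the cleanest route is to apply it entrywise to the Jacobi--Trudi determinant, so Albion's theorem is not even needed. Let $\ell = \length(\lambda)$. By the Jacobi--Trudi identity recalled in \eqref{eq:jacobiTrudiMatrices},
\[
\schur_{k\lambda} = \det\bigl(\completeH_{k\lambda_i - i + j}\bigr)_{1\le i,j\le \ell}.
\]
The determinant is a polynomial in its entries, so applying $\versch_k$ gives
\[
\versch_k(\schur_{k\lambda}) = \det\bigl(\versch_k(\completeH_{k\lambda_i - i + j})\bigr)_{i,j}.
\]

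I will then evaluate each entry with the lemma computing $\versch_k(\completeH_m)$. That lemma gives $\versch_k(\completeH_m)=\completeH_{m/k}$ if $k\mid m$ and $0$ otherwise. The index $k\lambda_i - i + j$ is divisible by $k$ exactly when $k \mid (j-i)$.

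Since $1\le i,j\le \ell$, we have $|j-i| \le \ell - 1 < \ell \le k$. Hence $k\mid(j-i)$ forces $i=j$, and every off-diagonal entry vanishes. The diagonal entries are $\versch_k(\completeH_{k\lambda_i}) = \completeH_{\lambda_i}$. The matrix is therefore diagonal, and its determinant is $\prod_i \completeH_{\lambda_i} = \completeH_\lambda$.

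There is no real obstacle. The only points to check are that $\versch_k$ is a ring homomorphism, so it commutes with taking determinants, and that indices $k\lambda_i - i + j$ which are negative still give $\completeH = 0$ both before and after applying $\versch_k$. The second point is harmless, because such entries are off-diagonal and are killed regardless.

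As a sanity check, I would also compare with Theorem~\ref{thm:albion}. Pad $\lambda$ with zeros to length $k$; the beta-numbers $k\lambda_i + k - i$ for $i=1,\dots,k$ then have distinct residues mod $k$. So each runner of the abacus carries exactly one bead, at level $\lambda_i$, and every quotient $\lambda^{(r)}$ is a single row $(\lambda_i)$. All the ribbons in $k\lambda$ are horizontal, so the sign is $+1$, and the product $\prod_r \schur_{\lambda^{(r)}}$ is again $\completeH_\lambda$.
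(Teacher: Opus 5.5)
Your proof is correct, but it takes a different route from the paper.

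The paper deduces the proposition from Albion's theorem (Theorem~\ref{thm:albion}). It identifies the $k$-quotient of $k\lambda$ as $\lambda^{(r)}=\{\lambda_j : j\equiv r \bmod k\}$. It then observes that for $k\ge\length(\lambda)$ each quotient is a single row or empty, and reads off $\prod_r\schur_{\lambda^{(r)}}=\completeH_\lambda$. This is essentially your closing sanity check, with the abacus bookkeeping and the (trivial) ribbon sign left implicit.

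Your main argument needs only the Jacobi--Trudi identity, the fact that $\versch_k$ is a ring homomorphism, and the lemma $\versch_k(\completeH_m)=\completeH_{m/k}$ or $0$. The congruence $k\lambda_i-i+j\equiv j-i \pmod k$, together with $|j-i|<k$, kills every off-diagonal entry. Your remark on negative indices handles the only loose end.

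What your route buys is self-containedness and generality. Drop the hypothesis $k\ge\length(\lambda)$: the same congruence makes the matrix block diagonal after simultaneously permuting rows and columns by residue class mod $k$, which leaves the determinant unchanged. This gives $\versch_k(\schur_{k\lambda})=\prod_{r=1}^{k}\schur_{(\lambda_r,\lambda_{r+k},\lambda_{r+2k},\dotsc)}$. The skew Jacobi--Trudi matrix gives the analogous product for $\schur_{k\lambda/k\mu}$. These are exactly the Schur-positivity statements the paper obtains from Albion, with no ribbon tilings or signs to track.

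The paper's route, in exchange, presents the proposition as a special case of a theorem covering every skew shape tileable by $k$-ribbons, not only stretched shapes $k\lambda/k\mu$.
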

\begin{proof}
By the definition of partition 
quotients (see e.g. \cite[p. 12]{Macdonald1995}) it 
is straightforward to see that
\[
\lambda^{(r)} = \{\lambda_j : j \equiv r \mod k\}
\]
is what we get when $k\lambda$ is divided by $k$.
If $k \geq \length(\lambda)$, then each $\lambda^{(r)}$ is a single part (or empty),
so $\schur_{\lambda^{(r)}}$ is the complete homogeneous symmetric 
function $\completeH_{\lambda_r}$.
\end{proof}

\begin{example}
For $k=2$, we have:
\begin{align*}
\versch_k(\schur_{k\cdot 5}) &= \schur_5 \\
\versch_k(\schur_{k\cdot 41}) &= \schur_5+\schur_{41} \\
\versch_k(\schur_{k\cdot 32}) &= \schur_5+\schur_{32}+\schur_{41} \\
\versch_k(\schur_{k\cdot 311}) &= \schur_{32}+\schur_{41}+\schur_{311} \\
\versch_k(\schur_{k\cdot 221}) &= \schur_{32}+\schur_{41}+\schur_{221}+\schur_{311} \\
\versch_k(\schur_{k\cdot 2111}) &= \schur_{32}+\schur_{221}+\schur_{311}+\schur_{2111} \\
\versch_k(\schur_{k\cdot 11111}) &= \schur_{221}+\schur_{2111}+\schur_{11111}
\end{align*}
Note that for larger partitions, some of the coefficients are greater than $1$.
\end{example}

\subsection{Chromatic symmetric functions}If we apply $\rowMap_k$ to a chromatic symmetric function, we get the generating function for
``fractional'' proper colorings. 
We consider the sum of $\elementaryE$-coefficients here.
Let
\[
\eta:\Lambda\to \mathbb Z,\qquad \eta(\elementaryE_r)=1\ \text{for all }r\ge 1,
\]
so that for any symmetric function $F=\sum_\lambda c_\lambda \elementaryE_\lambda$, $\eta(F)$ is exactly the sum of its $\elementaryE$-coefficients.

For a graph $G=(V,E)$, write $n=|V|$. Recall a standard expansion of chromatic symmetric functions is
\[
X_G=\sum_{\pi\in \mathrm{StPar}(G)} \widetilde \monomial_{\lambda(\pi)},
\]
where $\pi$ runs over stable set partitions of $n$, $\lambda(\pi)$ is the block-size partition, and
\[
\widetilde \monomial_\lambda=\Bigl(\prod_i m_i(\lambda)!\Bigr)\monomial_\lambda
\]
is the augmented monomial symmetric function, see~\cite{Stanley1995}.

Applying $\rowMap_k$ just keeps those stable partitions whose block sizes are all divisible by $k$, and divides every part by $k$:
\[
\rowMap_k(X_G)=
\sum_{\pi\in \mathrm{StPar}_k(G)}
\widetilde \monomial_{\lambda(\pi)/k},
\]
where $\mathrm{StPar}_k(G)$ denotes stable partitions in which every block has size divisible by $k$. Note that $\eta(\widetilde \monomial_{\mu})=(-1)^{|\mu|-\ell(\mu)}\ell(\mu)!.$ Then,
\[
\defin{\Phi_k(X_G)}\coloneqq\sum_\lambda [e_\lambda]\rowMap_k(X_G)=
\sum_{\pi\in \mathrm{StPar}_k(G)}
(-1)^{\lambda(\pi)/k-\ell(\lambda(\pi))}\ell(\lambda(\pi))!
\]
with this being $0$ if $k\nmid n$.

On the other hand, for a composition $\alpha = (\alpha_1,\dots,\alpha_r) \vDash n$, the monomial quasisymmetric function $M_\alpha(\xvec)$ is
\[
M_\alpha(\xvec) = \sum_{i_1 < \cdots < i_r} x_{i_1}^{\alpha_1} \cdots x_{i_r}^{\alpha_r},
\]
and define the linear map
\[
\defin{\widehat{\Phi}_k(M_\alpha(\xvec)) }\coloneqq
\begin{cases}
(-1)^{n/k - r}, & \text{if each } \alpha_i \equiv 0 \pmod{k},\\
0, & \text{otherwise}.
\end{cases}
\]
For a partition $\lambda$, 
\[
\widetilde{\monomial}_\lambda=
\sum_{\operatorname{sort}(\alpha)=\lambda} M_\alpha,
\]
where the sum runs over all rearrangements of $\lambda$ into compositions. Taking multiplicities into account, one obtains
\[
\widehat{\Phi}_k(\widetilde{\monomial}_\lambda)=
\begin{cases}
(-1)^{|\lambda|/k - \ell(\lambda)} \ell(\lambda)!,
& \text{if all parts of } \lambda \text{ are divisible by } k,\\
0, & \text{otherwise}.
\end{cases}
\]
Hence 
$\widehat{\Phi}_k = \Phi_k$ 
on symmetric functions.
\begin{lemma}\label{lemm:Phi_kF}
Let $n = km$, and \defin{$T_{n,k}$} $\coloneqq \{k,2k,\dots,(m-1)k\}$. Then
\[
\Phi_k(F_{n,S}(\xvec)) =
\begin{cases}
1, & S = T_{n,k},\\
0, & S \ne T_{n,k}.
\end{cases}
\]
\end{lemma}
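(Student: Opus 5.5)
We will compute $\widehat{\Phi}_k(F_{n,S})$ directly from the definition, since $\widehat{\Phi}_k$ is defined on monomial quasisymmetric functions. We also want to use it to read off $\Phi_k$; the two maps agree on $\Lambda$, and for a non-symmetric $F_{n,S}$ we take $\Phi_k(F_{n,S})$ to mean $\widehat{\Phi}_k(F_{n,S})$.

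The first step is the standard expansion
\[
F_{n,S}=\sum_{U\supseteq S} M_{n,U},
\]
where $M_{n,U}$ is the monomial quasisymmetric function whose composition has partial-sum set $U\subseteq[n-1]$. Next we evaluate each term. By definition, $\widehat{\Phi}_k(M_{n,U})$ vanishes unless every part of the composition is divisible by $k$. This happens exactly when $U\subseteq T_{n,k}$. In that case the composition has $r=|U|+1$ parts, so $\widehat{\Phi}_k(M_{n,U})=(-1)^{m-1-|U|}$.

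Combining the two steps gives
\[
\Phi_k(F_{n,S})=\sum_{S\subseteq U\subseteq T_{n,k}}(-1)^{m-1-|U|}.
\]
If $S\not\subseteq T_{n,k}$, the sum is empty and the value is $0$. Otherwise we set $j=|T_{n,k}\setminus U|$, and the sum becomes
\[
\sum_{j}\binom{|T_{n,k}\setminus S|}{j}(-1)^{j}=(1-1)^{|T_{n,k}\setminus S|},
\]
using $|T_{n,k}|=m-1$. This equals $1$ exactly when $S=T_{n,k}$ and $0$ otherwise.

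The only delicate point is the sign bookkeeping, namely checking that $m-1-|U|$ equals the number of elements of $T_{n,k}$ omitted from $U$. There is also one interpretive point to keep explicit: $\Phi_k$ on quasisymmetric functions is understood as $\widehat{\Phi}_k$. Beyond that, the proof is a direct inclusion–exclusion computation.
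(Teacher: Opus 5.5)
Your proof is correct and takes essentially the same route as the paper: expand $F_{n,S}$ in monomial quasisymmetric functions, note that only terms with $U\subseteq T_{n,k}$ (the paper writes this as $\alpha=k\beta$) survive, and collapse the signed sum to $(1-1)^{|T_{n,k}\setminus S|}$. Indexing directly by $j=|T_{n,k}\setminus U|$ makes your sign bookkeeping slightly cleaner than the paper's rescaling to compositions of $m$. There the overall factor $(-1)^{m-1-|S'|}$ is dropped, which is harmless because it equals $1$ whenever the sum is nonzero.
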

\begin{proof}
   Using the monomial expansion $F_{n,S}(\xvec)=\sum\limits_{S \subseteq D(\alpha) \subseteq [n-1]}M_\alpha(\xvec)$, 
   we have
   \[
   \Phi_k(F_{n,S}(\xvec))=
\sum_{\substack{S \subseteq D(\alpha) \subseteq [n-1] \\
\alpha_i \equiv 0 \pmod{k}}}
(-1)^{n/k - \ell(\alpha)}.
   \]
   If $S$ contains an element not divisible by $k$, the sum is zero.
Otherwise write $\alpha = k\beta$ with $\beta \vDash m$, so
\[
D(\alpha) = k \cdot D(\beta).
\]
This reduces the sum to
\[
\Phi_k(F_{n,S}(\xvec))=\sum_{S' \subseteq D(\beta) \subseteq [m-1]} (-1)^{m - \ell(\beta)},
\]
where $S' = S/k$.
If $U=D(\beta)$, let $U=S'\bigcup W$. Then $\ell(\beta)=|U|+1$ and
\[
\Phi_k(F_{n,S}(\xvec))=(-1)^{m-1-|S'|}\sum_{W\subseteq[m-1]\setminus S'}(-1)^{|W|}=(1-1)^{m-1-|S'|},
\]
That is, $S' = [m-1]$, which occurs precisely when $S = T_{n,k}$; in this case, the value is $1$.
\end{proof}

By using the theory of $P$-partitions, $X_G$ can be expressed as
\begin{equation}\label{P-partition expansion}
    X_G=
\sum_{\mathfrak{o} \in AO(G)}
\ \sum_{w \in \mathcal{L}(P_{\mathfrak{o}}, \omega_{\mathfrak{o}})}
F_{n,\mathrm{Des}(w)},
\end{equation}
where $AO(G)$  is the set of acyclic orientations, $P_{\mathfrak{o}} $ 
is the poset induced by the acyclic orientation, 
$\omega_{\mathfrak{o}}$ is a decreasing labeling,
 $\mathcal{L}(P_{\mathfrak{o}}, \omega_{\mathfrak{o}})$ is the set of 
 linear extensions, see~\cite{ShareshianWachs2016}. 
 Therefore, so applying Lemma~\ref{lemm:Phi_kF} in~\eqref{P-partition expansion}, we have following corollary.

\begin{corollary}
    Let $G = (V,E)$ be a graph on $n$ vertices, and let $k$ be a positive integer. Then
\[
\Phi_k(X_G)=
\#\{
(\mathfrak{o}, w) :
\mathfrak{o} \in AO(G), \;
w \in \mathcal{L}(P_{\mathfrak{o}}, \omega_{\mathfrak{o}}), \;
\mathrm{Des}(w) = \{k,2k,\dots,n-k\}
\}.
\]
In particular, when $k=1$,  $\Phi_1(X_G) = \# AO(G)$, and $\Phi_k(X_G) = 0$ unless $k \mid n$. 
\end{corollary}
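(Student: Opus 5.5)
The plan is to combine the $P$-partition expansion \eqref{P-partition expansion} with Lemma~\ref{lemm:Phi_kF}, using linearity of $\Phi_k$ on quasisymmetric functions. We take $\Phi_k=\widehat{\Phi}_k$, the linear functional defined on $\QSym$ via the monomial quasisymmetric basis. The discussion preceding Lemma~\ref{lemm:Phi_kF} shows that $\widehat{\Phi}_k$ agrees on $\Lambda$ with the sum of $\elementaryE$-coefficients of $\rowMap_k(\cdot)$. Since $X_G\in\Lambda$, it is therefore legitimate to evaluate $\Phi_k(X_G)$ termwise on any quasisymmetric expansion of $X_G$.

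First, apply $\widehat{\Phi}_k$ to \eqref{P-partition expansion}. This gives
\[
\Phi_k(X_G)=\sum_{\mathfrak{o}\in AO(G)}\ \sum_{w\in\mathcal{L}(P_{\mathfrak{o}},\omega_{\mathfrak{o}})}\Phi_k\bigl(F_{n,\mathrm{Des}(w)}\bigr).
\]
By Lemma~\ref{lemm:Phi_kF}, each summand equals $1$ exactly when $k\mid n$ and $\mathrm{Des}(w)=T_{n,k}=\{k,2k,\dots,n-k\}$, and equals $0$ otherwise. The sum therefore counts the pairs $(\mathfrak{o},w)$ in the statement.

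Next, for the case $k\nmid n$, recall that $\rowMap_k$ kills every monomial of degree not divisible by $k$, so $\rowMap_k(X_G)=0$ and hence $\Phi_k(X_G)=0$. Alternatively, $\widehat{\Phi}_k(M_\alpha)=0$ whenever some part of $\alpha$ is not divisible by $k$, which always happens when $k\nmid n$.

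Finally, for $k=1$ the condition reads $\mathrm{Des}(w)=[n-1]$, so every consecutive pair is a descent. With respect to the decreasing labeling $\omega_{\mathfrak{o}}$, each poset $P_{\mathfrak{o}}$ has exactly one such linear extension, namely the one read in decreasing label order. This order is a linear extension because $\omega_{\mathfrak{o}}$ is decreasing (order-reversing), and it is the unique permutation with full descent set. Hence $\Phi_1(X_G)=\#AO(G)$.

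The main obstacle is justifying that $\widehat{\Phi}_k$, defined on $\QSym$, really coincides with $\Phi_k$ on $X_G$. This needs the identity $\widehat{\Phi}_k(\widetilde{\monomial}_\lambda)=\Phi_k(\widetilde{\monomial}_\lambda)$. That identity in turn relies on $\eta(\widetilde{\monomial}_\mu)=(-1)^{|\mu|-\ell(\mu)}\ell(\mu)!$, which comes from $\eta$ being the ring map $\elementaryE_r\mapsto1$, so that $\eta(\powersum_r)=(-1)^{r-1}$. Both facts are established in the text above, so I would cite them directly. The remaining convention to check is that the descent set of $w$ in \eqref{P-partition expansion} is taken with respect to $\omega_{\mathfrak{o}}$, which makes the $k=1$ claim hold.
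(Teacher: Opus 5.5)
Your proposal is correct and follows the paper's proof. You apply $\Phi_k=\widehat{\Phi}_k$ termwise to the $P$-partition expansion and invoke Lemma~\ref{lemm:Phi_kF}, which is all the paper does. Your explicit treatment of the case $k\nmid n$, and of the unique fully-descending linear extension per acyclic orientation when $k=1$, fills in details the paper leaves implicit.
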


\subsection{Generalizations}

For each $i \in \mathbb{Z}_{>0}$, define operators $\pi_i$ and $\theta_i$ on $\mathbb{Z}[x_1,x_2,\dots]$ by
\[
\pi_i f = \partial_i(x_i f), \qquad \theta_i f = x_{i+1}\,\partial_i f,
\]
where the divided difference operator $\partial_i$ is given by
\[
\partial_i f = \frac{f - s_i f}{x_i - x_{i+1}},
\]
and $s_i$ acts by swapping $x_i$ and $x_{i+1}$, see~\cite{Macdonald1991}. 

If $w = s_{i_1}\cdots s_{i_k}$ is a reduced expression, define
\[
\pi_w = \pi_{i_1}\cdots \pi_{i_k}, \qquad \theta_w = \theta_{i_1}\cdots \theta_{i_k}.
\]
These definitions are independent of the choice of reduced expression.

\begin{definition}[Key polynomials~\cite{Demazure1974}]
Let $\alpha = (\alpha_1,\alpha_2,\dots)$ be a composition with finitely many nonzero parts. 
Let $\lambda = \mathrm{sort}(\alpha)$ be the partition obtained by rearranging the parts of $\alpha$ in weakly decreasing order, and let $w \in S_\infty$ be the shortest permutation such that
\[
\lambda \cdot w = \alpha.
\]
The \defin{key polynomial} (or \defin{Demazure character}) indexed by $\alpha$ is defined by
\[
\kappa_\alpha = \pi_w(x^\lambda),
\]
where $x^\lambda = x_1^{\lambda_1} x_2^{\lambda_2} \cdots$, and $\pi_w$ is the isobaric divided difference operator associated to $w$.
\end{definition}

\begin{definition}[Atom polynomials~\cite{Lascoux1990Keys}]
Let $\alpha$ be a composition, and let $\lambda = \mathrm{sort}(\alpha)$ and $w \in S_\infty$ be as above. 
The \defin{atom polynomial} indexed by $\alpha$ is defined by
\[
A_\alpha = \theta_w(x^\lambda),
\]
where $\theta_w$ is the isobaric divided difference operator defined using the operators $\theta_i$.
\end{definition}

\begin{example}
For key polynomials (Demazure characters), we have the following expansions:
\begin{align*}
\rowMap_2(\kappa_{(0,3,1,4)})
=&\kappa_{(0,2,1,1)} + \kappa_{(1,1,0,2)} + \kappa_{(1,1,1,1)} - \kappa_{(1,2,0,1)}\\
  =& A_{(0,2,1,1)} + A_{(1,1,0,2)} + A_{(1,1,1,1)} + A_{(1,1,2)} \\
  +& A_{(1,2,0,1)} + A_{(1,2,1)} + A_{(2,0,1,1)} + A_{(2,1,0,1)} + A_{(2,1,1)}.
\end{align*}
\end{example}

\begin{conjecture}
The row operator applied to a key polynomial is atom-positive.
\end{conjecture}

\textbf{Acknowledgment.} The second author acknowledges the financial support provided by China~Scholarship~Council (CSC), and thanks Dun Qiu for helpful discussions.

\bibliographystyle{alpha}
\bibliography{theBib}

@article{AlexanderssonPfannererRubeyUhlin2020,
Author = {Per Alexandersson and Stephan Pfannerer and Martin Rubey and Joakim Uhlin},
Title = {Skew characters and cyclic sieving},
doi = {10.1017/fms.2021.11},
Year = {2021},
volume = {9},
pages = {e41},
publisher={Cambridge University Press},
journal = {Forum of Mathematics, Sigma}
}

@article{Branden2004sg,
  title = {Sign-Graded Posets, Unimodality of  $W$-Polynomials and the {C}harney--{D}avis Conjecture},
  volume = {11},
  ISSN = {1077-8926},
  url = {http://dx.doi.org/10.37236/1866},
  DOI = {10.37236/1866},
  number = {2},
  journal = {The Electronic Journal of Combinatorics},
  publisher = {The Electronic Journal of Combinatorics},
  author = {Br{\"{a}}nd{\'{e}}n,  Petter},
  year = {2004},
  month = nov 
}

@book{Macdonald1995,
	address = {New York},
	author = {Ian G. Macdonald},
	edition = {Second},
	isbn = {0-19-853489-2},
	mrclass = {05E05 (05-02 20C30 20C33 20K01 33C80 33D80)},
	note = {With contributions by A. Zelevinsky, Oxford Science Publications},
	pages = {x+475},
	publisher = {The Clarendon Press, Oxford University Press},
	series = {Oxford Mathematical Monographs},
	title = {Symmetric functions and {H}all polynomials},
	year = {1995}
}

@Book{Macdonald1991,
	title = {Notes on {S}chubert polynomials},
	author = {Ian G. Macdonald},
	year = {1991},
	publisher = {LACIM, Université du Québec à Montréal}
}

@book{Fulton1997,
  title = {Young Tableaux: With Applications to Representation Theory and Geometry},
  doi = {10.1017/cbo9780511626241},
  author = {William Fulton},
  isbn = {978-0511626241},
  series = {London Mathematical Society Student Texts (Book 35)},
  year = {1997},
  publisher = {Cambridge University Press}
}

@book{StanleyEC2,
    author = {Richard P. Stanley},
    day = {04},
    edition = {First},
    howpublished = {Paperback},
    isbn = {0521789877},
    publisher = {Cambridge University Press},
    title = {Enumerative {C}ombinatorics: {V}olume 2},
    url2 = {http://www.worldcat.org/isbn/0521789877},
    doi={10.1017/CBO9780511609589},
    year = {2001}
}

@incollection{KingTolluToumazet2004,
  author = {R. C. King and C. Tollu and F. Toumazet},
  title = {Stretched {L}ittlewood--{R}ichardson coefficients and {K}ostka coefficients},
  journal = {CRM Proceedings and Lecture Notes},
  year = {2004},
  pages = {99--112},
  volume = {34},  
  booktitle = {Symmetry in Physics: In Memory of {R}obert {T}. {S}harp},
  isbn = {978-0-8218-3409-1},
  editor = {P. Winternitz and J. Harnard and C. S. Lam and J. Patera},
  publisher = {AMS / OUP},
  url = {http://eprints.soton.ac.uk/41202/}
}

@article{Mcallister2008,
 author = {Tyrrell B. McAllister},
 title = {Degrees of stretched {K}ostka coefficients},
 journal = {J. Algebraic Comb.},
 volume = {27},
 number = {3},
 month = may,
 year = {2008},
 issn = {0925-9899},
 pages = {263--273},
 numpages = {11},
 doi = {10.1007/s10801-007-0083-2},
 acmid = {1361658},
 publisher = {Kluwer Academic Publishers},
 address = {Hingham, MA, USA}
}

@article{Rassart2004,
 author = {Etienne Rassart},
 title = {A polynomiality property for {L}ittlewood--{R}ichardson coefficients},
 journal = {J. Comb. Theory Ser. A},
 issue_date = {August 2004},
 volume = {107},
 number = {2},
 month = aug,
 year = {2004},
 issn = {0097-3165},
 pages = {161--179},
 numpages = {19},
 doi = {10.1016/j.jcta.2004.04.003},
 acmid = {1032085},
 publisher = {Academic Press, Inc.},
 address = {Orlando, FL, USA},
}

@incollection{Lascoux1990Keys,
    AUTHOR = {Lascoux, Alain and Sch{\"{u}}tzenberger, Marcel-Paul},
     TITLE = {Keys \& standard bases},
 BOOKTITLE = {Invariant theory and tableaux ({M}inneapolis, {MN}, 1988)},
    SERIES = {IMA Vol. Math. Appl.},
    VOLUME = {19},
     PAGES = {125--144},
 PUBLISHER = {Springer, New York},
      YEAR = {1990}
}

@article{ShareshianWachs2016,
  doi = {10.1016/j.aim.2015.12.018},
  url2 = {https://doi.org/10.1016/j.aim.2015.12.018},
  year  = {2016},
  month = jun,
  publisher = {Elsevier {BV}},
  volume = {295},
  number = {4},
  pages = {497--551},
  author = {John Shareshian and Michelle L. Wachs},
  title = {Chromatic quasisymmetric functions},
  journal = {Advances in Mathematics}
}

@article{Stanley1995,
author = {Richard P. Stanley},
title = {A Symmetric Function Generalization of the Chromatic Polynomial of a Graph},
journal = {Advances in Mathematics},
volume = {111},
number = {1},
pages = {166--194},
year = {1995},
issn = {0001-8708},
doi = {10.1006/aima.1995.1020},
url2 = {http://www.sciencedirect.com/science/article/pii/S0001870885710201}
}

@article{GuayPaquet2013,
  author = {Mathieu Guay-Paquet},
  title = {A modular law for the chromatic symmetric functions of $(3+1)$-free posets},
  eprint = {1306.2400},
  journal = {arXiv e-prints},
  year = {2013}
}

@article{Demazure1974,
  doi = {10.24033/asens.1261},
  url2 = {https://doi.org/10.24033/asens.1261},
  year  = {1974},
  publisher = {Societe Mathematique de France},
  volume = {7},
  number = {1},
  pages = {53--88},
  author = {Michel Demazure},
  title = {D{\'{e}}singularisation des vari{\'{e}}t{\'{e}}s de Schubert g{\'{e}}n{\'{e}}ralis{\'{e}}es},
  journal = {Annales scientifiques de l'{\'{E}}cole normale sup{\'{e}}rieure}
}

@article{Gasharov1996,
title = {Incomparability graphs of $(3+1)$-free posets are $s$-positive},
journal = {Discrete Mathematics},
volume = {157},
number = {1},
pages = {193--197},
year = {1996},
issn = {0012-365X},
doi = {10.1016/S0012-365X(96)83014-7},
url2 = {http://www.sciencedirect.com/science/article/pii/S0012365X96830147},
author = {Vesselin Gasharov}
}

@article{AthanasiadisWagner2024,
  author = {Christos A. Athanasiadis and David G. Wagner},
  title = {Veronese sections and interlacing matrices of polynomials and formal power series},
  journal = {arXiv preprint arXiv:2404.12989},
  year = {2024},
  doi = {10.48550/arXiv.2404.12989},
  url = {https://arxiv.org/abs/2404.12989}
}

@article{EgeciogluRemmel1990,
  doi = {10.1080/03081089008817966},
  url2 = {https://doi.org/10.1080/03081089008817966},
  year = {1990},
  month = jan,
  publisher = {Informa {UK} Limited},
  volume = {26},
  number = {1-2},
  pages = {59--84},
  author = {{\"{O}}mer E{\u{g}}ecio{\u{g}}lu and Jeffrey B. Remmel},
  title = {A combinatorial interpretation of the inverse {K}ostka matrix},
  journal = {Linear and Multilinear Algebra}
}

@article{KaliszewskiMorse2019,
  doi = {10.1016/j.ejc.2019.05.006},
  url2 = {https://doi.org/10.1016/j.ejc.2019.05.006},
  year = {2019},
  month = oct,
  publisher = {Elsevier {BV}},
  volume = {81},
  pages = {354--377},
  author = {Ryan Kaliszewski and Jennifer Morse},
  title = {Colorful combinatorics and {M}acdonald polynomials},
  journal = {European Journal of Combinatorics}
}

@article{GrojnowskiHaiman2006,
author = {Ian Grojnowski and Mark Haiman},
year = {2006},
title = {Affine {H}ecke algebras and positivity of {LLT} and {M}acdonald polynomials},
journal = {Preprint},
url = {https://math.berkeley.edu/~mhaiman/ftp/llt-positivity/new-version.pdf}
}

@article{Fisk2006x,
  author = {Steve Fisk},
  title = {Polynomials, roots, and interlacing},
  year = {2006},
  eprint = {math/0612833},
  url = {https://arxiv.org/abs/math/0612833},
  journal = {arXiv e-prints}
}

@incollection{Gessel1984,
    AUTHOR = {Ira M. Gessel},
     TITLE = {Multipartite ${P}$-partitions and inner products of skew {S}chur functions},
 BOOKTITLE = {Combinatorics and algebra ({B}oulder, {C}olo., 1983)},
    SERIES = {Contemp. Math.},
    VOLUME = {34},
     PAGES = {289--317},
 PUBLISHER = {Amer. Math. Soc., Providence, RI},
      YEAR = {1984},
   MRCLASS = {05A17 (20C30)},
  MRNUMBER = {777705},
MRREVIEWER = {Jacques D{\'{e}}sarm{\'{e}}nien},
       DOI = {10.1090/conm/034/777705},
       URL2 = {https://doi.org/10.1090/conm/034/777705},
}

@book{Butler1994,
  Author = {Lynne M. Butler},
  Title = {Subgroup Lattices and Symmetric Functions},
  Publisher = {American Mathematical Society},
  url = {https://bookstore.ams.org/memo-112-539},
  Year = {1994},
  ISBN = {082182600X}
}

@book{Sagan2001,
  doi = {10.1007/978-1-4757-6804-6},
  url2 = {https://doi.org/10.1007/978-1-4757-6804-6},
  year = {2001},
  publisher = {Springer New York},
  author = {Bruce E. Sagan},
  title = {The Symmetric Group}
}

@article{Albion2022,
Author = {Seamus P. Albion},
Title = {Universal characters twisted by roots of unity},
Year = {2022},
journal = {Algebr. Comb. 6 (2023), no. 6, 1653-1676},
doi = {10.5802/alco.320},
  url = {https://doi.org/10.5802/alco.320}
}

@book{Lothaire2002,
  doi = {10.1017/cbo9781107326019},
  url2 = {https://doi.org/10.1017/cbo9781107326019},
  year = {2002},
  publisher = {Cambridge University Press},
  author = {M. Lothaire},
  title = {Algebraic Combinatorics on Words}
}

@article{LLT1997,
  author = {A. Lascoux and B. Leclerc and J.-Y. Thibon},
  title = {Ribbon tableaux, {H}all--{L}ittlewood functions, quantum affine algebras, and unipotent varieties},
  journal = {J. Math. Phys.},
  volume = {38},
  year = {1997},
  pages = {1041--1068}
}

@article{Hikita2024,
  author = {Tatsuyuki Hikita},
  title = {A proof of the {S}tanley--{S}tembridge conjecture},
  journal = {arXiv preprint arXiv:2410.12758},
  year = {2024}
}

@article{AMDEBERHAN_ONO_SINGH_2025,
  title = {Derivatives of theta functions as traces of partition {E}isenstein series},
  volume = {258},
  DOI = {10.1017/nmj.2024.30},
  journal = {Nagoya Mathematical Journal},
  author = {Amdeberhan, Tewodros and Ono, Ken and Singh, Ajit},
  year = {2025},
  pages = {284--295}
}

@misc{Stanley2025ICECA,
  author = {Richard P. Stanley},
  title = {Symmetric functions arising from a theta function of {R}amanujan},
  howpublished = {Talk at ICECA 2025},
  year = {2025},
  month = aug,
  url = {https://math.mit.edu/~rstan/transparencies/am-sh.pdf},
  note = {Video available at \url{https://www.youtube.com/watch?v=BqUYxr8kHhc}}
}

@article{StanleyFlagSymmetric96,
  title = {Flag-symmetric and locally rank-symmetric partially ordered sets},
  author = {Richard P. Stanley},
  year = {1996},
  url = {http://www.combinatorics.org/Volume_3/Abstracts/v3i2r6.html},
  researchr = {https://researchr.org/publication/Stanley96},
  cites = {0},
  citedby = {0},
  journal = {Electr. J. Comb.},
  volume = {3},
  number = {2},
}

@misc{Tao2017,
  author = {Terence Tao},
  title = {Continuous analogues of the Schur and skew Schur polynomials},
  year = {2017},
  howpublished = {\url{https://terrytao.wordpress.com/2017/09/05/continuous-analogues-of-the-schur-and-skew-schur-polynomials/}}
}

@article{Prasad2018,
  author = {Amritanshu Prasad},
  title = {A Timed Version of the Plactic Monoid},
  journal = {arXiv preprint arXiv:1806.04393},
  year = {2018}
}

@incollection{OConnell2014,
  author = {Neil O'Connell},
  title = {Whittaker Functions and Related Stochastic Processes},
  booktitle = {Random Matrix Theory, Interacting Particle Systems, and Integrable Systems},
  series = {MSRI Publications},
  volume = {65},
  pages = {385--410},
  publisher = {Cambridge University Press},
  year = {2014}
}

\end{document}